\numberwithin{equation}{section}
\theoremstyle{plain} %default
\newtheorem{theorem}{Theorem}[section]
\newtheorem{corollary}[theorem]{Corollary}
\newtheorem{lemma}[theorem]{Lemma}
\newtheorem{prop}[theorem]{Proposition}
\theoremstyle{definition}
\newtheorem{definition}[theorem]{Definition}
\theoremstyle{remark}
\newtheorem{remark}[theorem]{Remark}
\newcommand{\PP}{\mathbb{P}}
\newcommand{\R}{\mathbb{R}}
\newcommand{\C}{\mathbb{C}}
\newcommand{\Z}{\mathbb{Z}}
\newcommand{\LL}{\mathbb{L}}
\newcommand{\bL}{\mathbb{L}}
\newcommand{\OC}{\mathcal{OC}}
\newcommand{\CA}{\mathcal{A}}
\newcommand{\ks}{\mathfrak{ks}}
\newcommand{\str}{\mathrm{Str}}
\newcommand{\be}{\mathbf{1}}
\newcommand{\tr}{\mathrm{Tr}}
\newcommand{\AI}{A_\infty}
\newcommand{\Hom}{{\rm Hom}}
\newcommand{\bx}{\boldsymbol{x}}
\begin{document}

\title[]{Pairings in mirror symmetry \\ between a Symplectic manifold and a Landau-Ginzburg $B$-model}
\author{Cheol-hyun Cho}
\author{Sangwook Lee}
\author{Hyung-Seok Shin}
\address{Department of Mathematical Sciences, Research Institute in Mathematics, Seoul National University, Gwanak-gu, Seoul, South Korea}
\email{chocheol@snu.ac.kr}
\address{Korea Institute for Advanced Study\\ 85 Hoegi-ro Dongdaemun-gu, Seoul 02455, Korea}
\email{swlee@kias.re.kr}
\address{Korea Institute for Advanced Study\\ 85 Hoegi-ro Dongdaemun-gu, Seoul 02455, Korea}
\email{hsshin@kias.re.kr}

\begin{abstract}
We find a relation between Lagrangian Floer pairing of a symplectic manifold and Kapustin-Li pairing of the mirror Landau-Ginzburg model under localized mirror functor.  They are conformally equivalent with an interesting conformal factor $(vol^{Floer}/vol)^2$, which can be described as a ratio of Lagrangian Floer volume class and classical volume class. 

For this purpose,  we introduce $B$-invariant of Lagrangian Floer cohomology with values in Jacobian ring of the mirror potential function. And we prove what we call a multi-crescent Cardy identity under certain conditions, which is
a generalized form of Cardy identity.

As an application, we discuss the case of general toric manifold, and the relation to the work of Fukaya-Oh-Ohta-Ono and their $Z$-invariant.  Also, we compute the  conformal factor $(vol^{Floer}/vol)^2$  for the elliptic curve quotient $\mathbb{P}^1_{3,3,3}$, which gives a modular form.
\end{abstract}
\maketitle

\tableofcontents

\section{Introduction}
Mirror symmetry between a symplectic manifold $M$ and a Landau-Ginzburg $B$-model of a holomorphic function $W$ has been actively investigated  both in open and closed string sector.
For open string $A$-model, we consider Fukaya category of $M$, and for open string $B$-model, we consider the dg-category of matrix factorizations of $W$. For a closed string $A$-model, we restrict ourselves to quantum cohomology of $M$, and for $B$-model, Jacobian ring of $W$.

Pairings in each of these play important roles. Let us recall the pairings in closed theories.
Quantum cohomology  $QH^*(M)$ becomes a Frobenius algebra with a Poincar\'e duality pairing, and this generalizes to the structure of Frobenius manifold (see Dubrovin \cite{Du}). Jacobian ring of $W$ has a residue pairing, which generalizes to the Saito's flat structure on  the space of universal unfoldings of $W$ \cite{Sa}. The closed string mirror symmetry  have been studied by many people,  Givental, Fukaya-Oh-Ohta-Ono and Iritani to name a few. In particular, Fukaya-Oh-Ohta-Ono introduced a geometric construction of a map from quantum cohomology to the Jacobian ring, called Kodaira-Spencer map $\ks$.  One could ask whether this Kodaira-Spencer map preserves the pairing structures.  For example, Fukaya-Oh-Ohta-Ono \cite{FOOOtoric3} have shown that if the toric manifold is nef and the mirror potential function is Morse, then Kodaira-Spencer map preserves the (closed) pairings (i.e. $\ks$ is an isometry).

We will find that $\ks$ is not an isometry in general, but only a conformal map whose  conformal factor
can be described in terms of Lagrangian Floer theory.

Let us describe the pairings in open string theories. Fukaya category of a compact symplectic manifold has cyclic symmetric inner product. This is a pairing on Lagrangian Floer theory, given by Poincar\'e duality, with further cyclic symmetry with respect to $\AI$-operations (see Fukaya \cite{Fu}, cf. Ganatra \cite{G} for noncompact cases).
For matrix factorization category of a Landau-Ginzburg model, such pairing is given by Kapustin-Li pairing, suggested by Kapustin-Li \cite{KL} and mathematically developed by several people, including Murfet \cite{Mur}, Dyckerhoff-Murfet \cite{DM}, Polishchuck-Vaintrob \cite{PV10} and Shklyarov \cite{Shk}.
More precisely, the Kapustin-Li pairing is given by the following
formula. For $f\in \Hom_{MF(W)}(X,Y)$ and $g\in \Hom_{MF(W)}(Y,X[n])$,
\begin{equation}\label{kl}
  \langle f, g \rangle_{KL} =  \frac{1}{(2\pi i)^n n!}\oint_{\{|\partial_i W|=\epsilon\}}\frac{ \str (fg(dQ_Y)^{\wedge n})}{\partial_1 W \partial_2 W \cdots \partial_n W}
  \end{equation}
where $Q_Y$ is the structure map of the matrix factorization $Y$ (i.e. $Q_Y^2  = W \cdot Id$).

These open-string structures are sometimes called Calabi-Yau structures. Costello  \cite{Cos} have shown that the structure of open-closed topological conformal field theory is equivalent to a Calabi-Yau $\AI$-category. The relationship of pairings between open and closed theories in $A$-model (or in $B$-model)  are present already on the level of 2 dimensional topological conformal field theory (see Section 2 for more details).   Also, Kontsevich-Soibelman \cite{KS}, Costello \cite{Cos} and Ganatra-Perutz-Sheridan \cite{GPS} use this structure to build up closed theory from open string theory.

We are interested in relations of pairing under homological mirror symmetry between Fukaya category of $M$ and matrix factorization category of $W$.
So far, such a relation between open string pairings was largely mysterious.
We investigate such a relationship in this paper and  we use this to find the conformal factor for
Kodaira-Spencer map of closed string mirror symmetry.

Now, let us explain the construction in this paper.
We wish to establish the following commutative diagram in the case of toric manifolds.
Let $L$ be a Lagrangian submanifold of $M$, and denote by $HF(L,L)$ its Lagrangian Floer cohomology (which
we assume to be well-defined possibly after adding a bounding cochain $\xi$).
\begin{equation}\label{diagram7}   \xymatrix{
	HF(L,L) \ar[dd]^{\mathcal{F}^\bL}  \ar[rr]^-{OC} \ar@{-->}[ddrr]^-{B} &  \;\;\;\;   &  QH^*(M) \ar[d]^{\ks}  \\
	& & Jac(W_\bL) \ar[d]^-{I} \\
	MF(\mathcal{F}^\bL(L),\mathcal{F}^\bL(L))    \ar[rr]_-{\textrm{boundary-bulk}} &   \;\;\;\;   &   \;\;\;\;    Jac(W_\bL)dx_1 \cdots dx_n}
\end{equation}
Recall that open-closed map on $A$-side (denoted by $OC$) is constructed by counting appropriate
pseudo-holomorphic disks. On $B$-side, the corresponding open-closed map is more often called {\em boundary-bulk map}.
For closed string mirror symmetry, we consider the Kodaira-Spencer map $\ks$ (as in \cite{FOOOtoric3}).
(Geometric construction of  Kodaira-Spencer map from quantum cohomology to Jacobian ring is only known in the
case of toric manifolds \cite{FOOOtoric3}.  There is a work in progress by Amorim-Hong-Lau and the first author in the case of orbispheres.)
For open string mirror symmetry,  we use the homological mirror functor $\mathcal{F}^\bL$ from localized mirror construction
of \cite{CHL} and \cite{CHL2}. 
There an explicit homological mirror functor has been constructed by studying formal deformation space of
a particular (immersed) Lagrangian submanifold $\bL$, which is called reference Lagrangian. Floer potential of $\bL$
defines a Landau-Ginzburg model $W_\bL$ defined on the formal Maurer-Cartan space of $\bL$.
Then, a curved version of Yoneda embedding canonically defines an explicit $\AI$-functor from entire Fukaya category to the matrix factorization category of $W_\bL$. It is called localized mirror functor, since the functor only detects Lagrangians which has non-trivial Floer intersection with $\bL$. In particular, it provides an explicit correspondence between the Floer strip and the structure map $Q$ of
the matrix factorization ($Q^2 = W_\bL - \lambda$ for some constant $\lambda$).
Localized mirror formalism works a little differently depending on whether the reference Lagrangian $\bL$ is an immersed Lagrangian or Lagrangian torus (see Section \ref{sec:lmf}.)

Now, to compare $(\ks \circ OC)$ and $(\textrm{boundary-bulk} \circ \mathcal{F}^\bL)$, we introduce an isomorphism
$$I :  Jac(W_\bL) \to  Jac(W_\bL)dx_1 \cdots dx_n$$ defined by 
$$ I (f) = c_\bL \cdot f dx_1 \cdots dx_n,\;\;\;  c_\bL \in \Lambda_0 $$
Here, $c_\bL = vol^{Floer}/vol$ is the ratio of Floer volume class and classical volume class defined  in Definition \ref{defn:fv}.

The isomorphism $I$ can be viewed from the following perspective. 
%We want to equip with suitable Frobenius algebra structure on $Jac(W_\LL)$ so that it is isomorphic to the Frobenius algebra $QH^*(M)$. 
Kodaira-Spencer map $\ks$ is a ring isomorphism to Hochschild {\em cohomology} of $MF(W_\LL)$,
which is isomorphic to the Jacobian ring $Jac(W_\LL)$. On the other hand, the residue is canonically defined on $Jac(W_\LL)dx_1\cdots dx_n$, which is isomorphic to the Hochschild {\em homology} of $MF(W_\LL)$. 
Therefore, the issue is how to identify Hochschild cohomology and homology as modules.
We propose to use the map $I$ above and this factor comes from the  Diagram \ref{diagram7}.
%To understand the diagram \eqref{diagram7}, we will introduce a map $B$, which associates an element of Jacobian ring to an element in $HF(L,L)$.
\begin{figure}
	\includegraphics{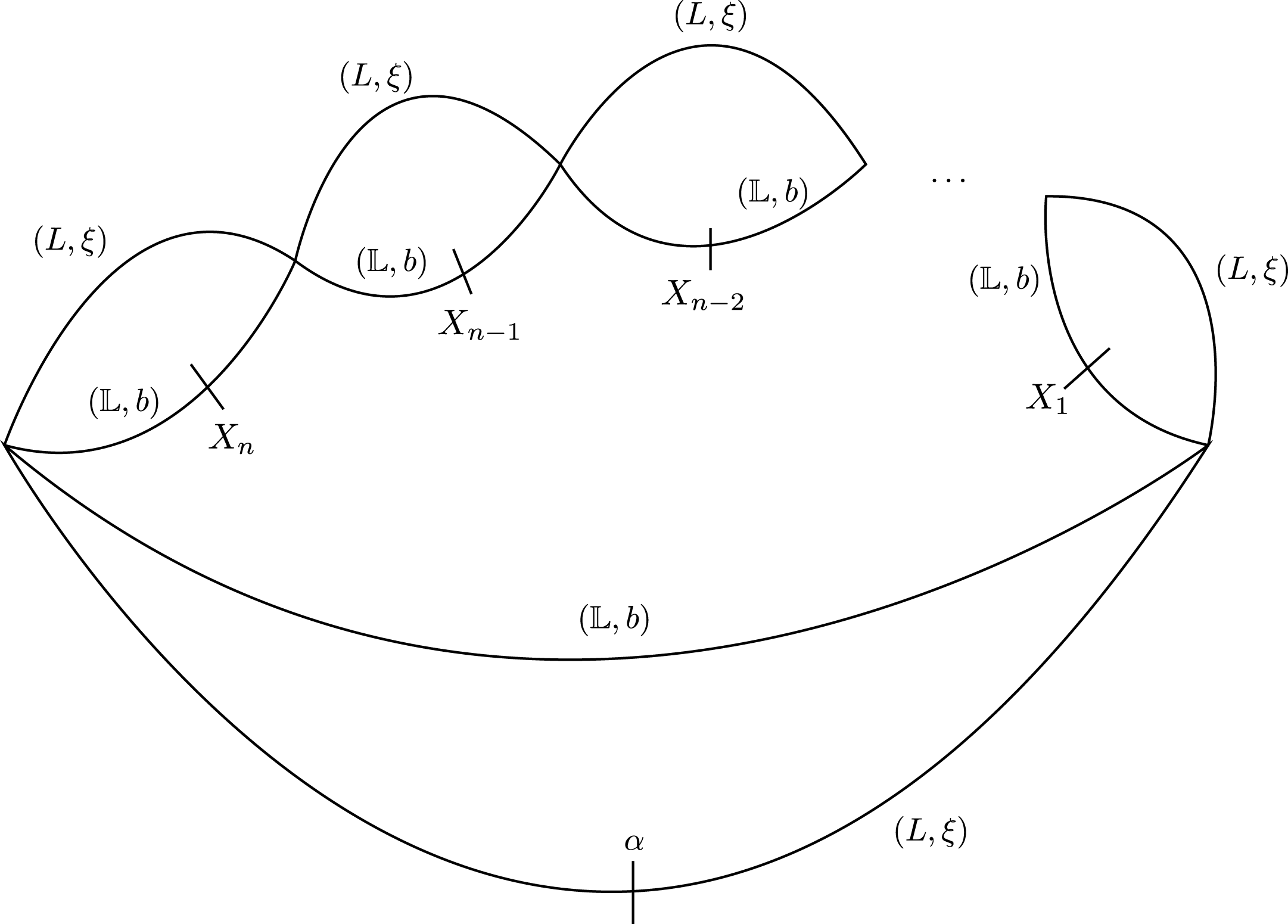}
	\caption{Boundary-bulk map of  $\mathcal{F}^\bL(\alpha)$}
	\label{fig:supertrace}
\end{figure}

\begin{figure}
\includegraphics{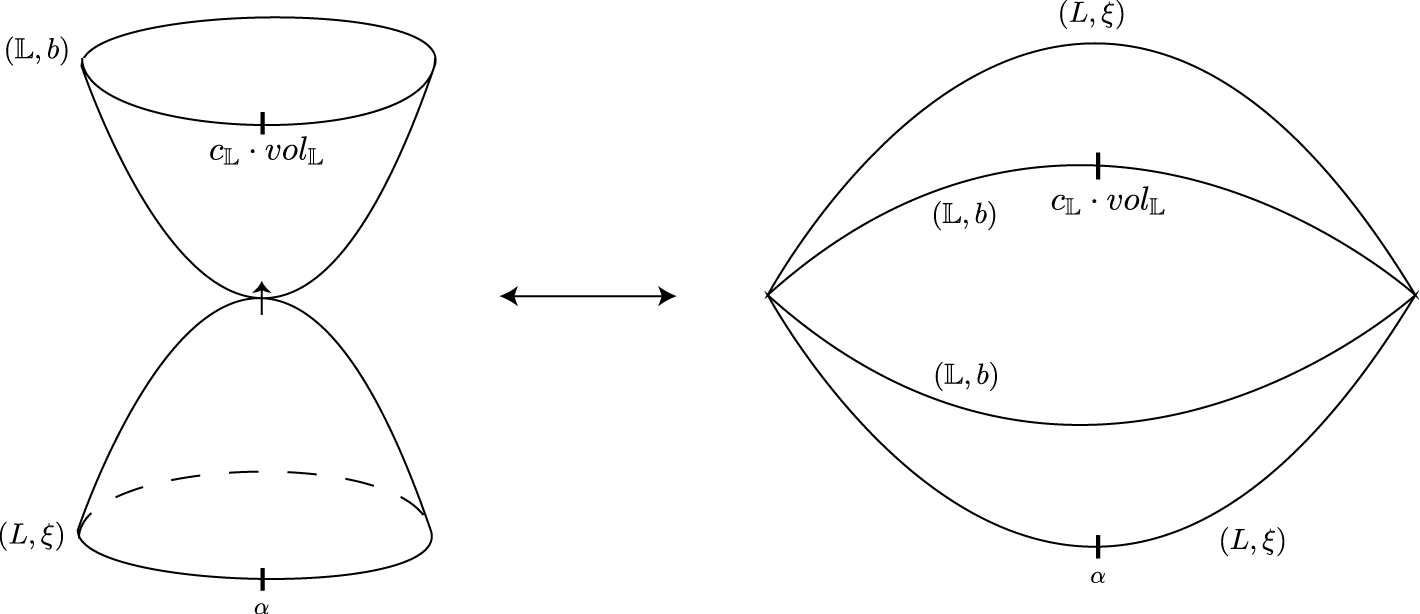}
\caption{Cardy identity between $I \circ \ks(\OC(\alpha))$ and $B$}
\label{fig:cobordism}
\end{figure}
%\begin{figure}
%\includegraphics{bicrescents.png}
%\caption{A bicrescent which corresponds to Figure \ref{fig:sandglass}}
%\label{fig:bicrescent}
%\end{figure}
One of main observation is that the map $(\textrm{boundary-bulk} \circ \mathcal{F}^\bL)$, which is the supertrace $\str \big( \mathcal{F}^\bL(\alpha) \circ (dQ)^{\wedge n}\big)$ in \eqref{kl},
corresponds to the diagram of $J$-holomorphic discs in Figure \ref{fig:supertrace} (multi-crescent diagram).
%can be identified with the multi-crescent diagram with $n+1$ string of discs involving sequence of Floer strips The appearance of multi-crescent is natural from localized mirror formalism. Indeed,
Then geometrically,  the commutativity of the diagram \eqref{diagram7}  corresponds to
identification of Figure \ref{fig:supertrace} and Figure \ref{fig:cobordism}.
Here, $B$-invariant corresponds to the diagram on the right of  Figure \ref{fig:cobordism} (bi-crescent diagram) and
the  upper-right composition  $I\circ\ks \circ OC$ corresponds to the diagram on the  left of Figure \ref{fig:cobordism} (sand-glass diagram). The correspondence in Figure \ref{fig:cobordism} is called Cardy identity.
Thus, we may call the commutativity of the diagram \eqref{diagram7}  a {\em multi-crescent Cardy identity}. 

%The lower-left composition $(\textrm{boundary-bulk} \circ \mathcal{F}^\bL)$ corresponds to multi-crescent configuration of discs  as in the Figure \ref{fig:supertrace}.
%The
% sandglass configuration of discs, The new map $B$ corresponds to bi-crescent configuration of discs, where two discs are attached at two boundary points, 

 %It will be explained in Section \ref{sec:bb}. 
 \begin{theorem}
\begin{enumerate}
\item The lower-left triangle of  \eqref{diagram7} always commutes.
\item The upper-right triangle of  \eqref{diagram7} commutes for toric manifolds.
\end{enumerate}
\end{theorem}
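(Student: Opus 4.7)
The two triangles have quite different character: the lower-left is essentially algebraic/definitional once the localized mirror functor of \cite{CHL,CHL2} is unpacked, while the upper-right is a genuine Cardy-type cobordism identity, which is why the toric hypothesis (needed for the geometric Kodaira-Spencer map of \cite{FOOOtoric3}) appears only there.

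For (1), the plan is to unpack the boundary-bulk map applied to $\mathcal{F}^\bL(\alpha)$. By \eqref{kl} this is the supertrace $\str(\mathcal{F}^\bL(\alpha)(dQ)^{\wedge n})$ divided by $\partial_1 W_\bL \cdots \partial_n W_\bL$ and wedged with $dx_1 \cdots dx_n$. Substituting the explicit formulas of \cite{CHL,CHL2} --- $\mathcal{F}^\bL(\alpha)$ is a count of Floer polygons between $\bL$ and $L$ carrying $\alpha$, $Q$ is the curved structure map encoding Floer polygons on $\bL$, and each $dQ$ differentiates in the Maurer-Cartan coordinates $x_1, \ldots, x_n$ of $\bL$ and hence inserts an extra $\bL$-marked point --- collects the supertrace into a weighted count of the multi-crescent curves in Figure \ref{fig:supertrace}. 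Taking $B(\alpha)$ to be defined by this count (or, equivalently, passing from multi-crescent to bi-crescent by collapsing the $n$ strip insertions into a single chord via a short gluing argument) yields commutativity of the lower-left triangle. The argument is local on the moduli spaces involved and requires no toric hypothesis.

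For (2), the sand-glass diagram on the left of Figure \ref{fig:cobordism} realizes $I \circ \ks \circ OC(\alpha)$: $OC(\alpha)$ comes from a disc with boundary on $L$ and one interior marked point, $\ks$ is the FOOO count of discs with boundary on $\bL$ and an interior marked point constrained by that class, and $I$ weights by $c_\bL\, dx_1 \cdots dx_n$. The bi-crescent on the right is $B(\alpha)$. I would construct a one-parameter family of bordered stable maps between the two configurations by shrinking the neck separating the $\alpha$-disc and the $\bL$-disc of the sand-glass until it opens up into the connected annular bi-crescent. In the toric setting the Kuranishi and $T^n$-equivariant perturbation machinery of Fukaya-Oh-Ohta-Ono makes the parametrized moduli a compact $1$-manifold whose two ends count the sand-glass and bi-crescent configurations, with the gluing weight at the $\ks$-marked point evaluating to exactly $c_\bL = vol^{Floer}/vol$, which is why the isomorphism $I$ appears in the diagram at all. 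The main obstacle is precisely this cobordism analysis: identifying all codimension-$1$ degenerations of the $1$-parameter family, ruling out or canceling extraneous bubbling, tracking orientations so the bi-crescent end contributes with the correct sign, and extracting $c_\bL$ as a sharp equality (not just up to a unit) from the integration over the $\ks$-marked point, which forces the specific normalization of Definition \ref{defn:fv}.
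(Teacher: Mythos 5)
Your split of the two triangles is right in outline, but both halves of your argument have a gap at the crucial point.

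For part (1): the lower-left triangle is not ``essentially definitional.'' The $B$-invariant is already defined in \eqref{eq:Bdef} as the \emph{bi-crescent} supertrace $\str\big(\bullet\mapsto \pm\, m_2(\alpha,m_2(\bullet,[X_n,\cdots,X_1]))\big)$, so you cannot ``take $B(\alpha)$ to be defined by'' the multi-crescent count --- that would be circular. The whole content of part (1) is the identity, valid only in $Jac(W_\bL)$, between the multi-crescent supertrace $\str\big(\mathcal{F}^\bL(\alpha)\circ\partial_{x_1}Q\circ\cdots\circ\partial_{x_n}Q\big)$ and the bi-crescent one, and your ``short gluing argument'' is exactly the step that needs proof. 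The paper does it algebraically (Theorem \ref{mainthm}): since $\partial_{x_i}m_1^{\xi,b}(\bullet)=m_2^{\xi,b,b}(\bullet,X_i)$, one collapses the $n$-fold composition of $m_2(\cdot,X_i)$'s into a single $m_2(\cdot,[X_n,\cdots,X_1])$ by repeated $\AI$-relations; the error terms are (a) terms containing $m_1^b(X_i)=\partial_{x_i}W\cdot\be$, which lie in the Jacobian ideal by Lemma \ref{lem:fg}(3) and die only after passing to the quotient, and (b) supertraces of $\delta$-coboundaries, which vanish by Lemma \ref{strprop}. Your sketch engages with neither mechanism and in particular does not explain why the identity holds only modulo $(\partial_{x_1}W,\dots,\partial_{x_n}W)$. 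You also omit the Polishchuk--Vaintrob input (Theorem \ref{perminv}) needed to pass from $\tfrac{1}{n!}\str(f(dQ)^{\wedge n})$ in \eqref{kl} to the ordered product $\str(f\circ\partial_{x_1}Q\circ\cdots\circ\partial_{x_n}Q)\,dx_1\wedge\cdots\wedge dx_n$, without which the comparison with $B(\alpha)$ (and its permutation invariance) does not even start.

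For part (2): the annulus cobordism is indeed the right picture --- it is Fukaya--Oh--Ohta--Ono's Theorem \ref{fooothm1}, which the paper quotes rather than re-derives --- but your account of where $c_\bL$ enters is wrong. The annulus degeneration identity carries \emph{no} factor of $c_\bL$: it equates $\langle i_{*}(v),i_{*}(w)\rangle_{PD_M}$ with $\sum g^{IJ}\langle m_2(e_I,v),m_2(e_J,w)\rangle$ on the nose, and there is no ``gluing weight at the $\ks$-marked point'' producing $vol^{Floer}/vol$. The conformal factor appears afterwards, purely algebraically: Proposition \ref{ksoc} shows $\ks\circ OC(\alpha)$ equals the bi-crescent supertrace with the \emph{classical} volume class $vol_L$ inserted (because the closed-open map outputs a multiple of the unit, which is then paired against $vol_L$), whereas $B(\alpha)$ is the same supertrace with the \emph{Floer} volume class $[X_n,\cdots,X_1]$ inserted; since by Theorem \ref{thm:vd} only the degree-$n$ component of the insertion contributes, the two differ by exactly $c_\bL$ from Definition \ref{defn:fv}. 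If you go looking for $c_\bL$ in the integration over the interior marked point of your one-parameter family, you will not find it, and the normalization claim at the end of your proposal would fail.
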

We prove (1)  in Theorem \ref{mainthm} and Corollary \ref{cor:ll}.
We find how to transform multi-crescents in Figure 
\ref{fig:supertrace} to bi-crescents $B(\alpha)$, using $\AI$-equations, Jacobian relations and properties of supertrace.

We prove (2) in Theorem \ref{thm:kscommute}.
The part (2) corresponds to Figure \ref{fig:cobordism}, which is a reformulation of the proof in  toric cases by
Fukaya-Oh-Ohta-Ono \cite{FOOOtoric3}.
Although cobordism argument was verified only for toric manifolds, we expect the theorem to hold in general cases.

In the case of orbi-sphere $\PP^1_{a,b,c}$, Amorim-Hong-Lau and the first author defines Kodaira-Spencer map
in the work in progress
and proves closed string mirror symmetry. The proof of Cardy identity in \cite{FOOOtoric3} carries over to this case also
(we will assume this in this paper).

Next step is to consider pairings in closed string mirror symmetry
with isomorphism $I$.

\begin{equation}\label{diagramfooo7}   \xymatrix{
	QH^*(M)_\LL  \otimes QH^*(M)_\LL \ar[r]^-{ \langle \cdot, \cdot \rangle_{PD_M}} \ar[d]^{\ks \otimes \ks} & \;\;\;\; \Lambda \;\;\;\; \ar@{=}[dd] \\
	  Jac(W_\LL) \otimes Jac(W_\LL) \ar[d]^{I\otimes I
	  } &   \\
	 \big(Jac(W_\LL)dx_1\cdots dx_n \big) \otimes \big( Jac(W_\LL)dx_1\cdots dx_n \big)\;\;\;\; \ar[r]^-{\langle \cdot, \cdot \rangle_{res}} &\;\;\;\; \Lambda \;\;\;\; }  
\end{equation}
This diagram means that the Kodaira-Spencer map preserves the pairings up to the conformal factor $(c_\bL)^2$.
Unfortunately, it is not known whether the diagram commutes in general.
There are cases that we know the above diagram commutes.
\begin{prop}
\begin{enumerate}
\item
Let $M$ be a general toric manifold such that the mirror $W$ has only Morse singularities.
Let $\LL$ be a Lagangian torus fiber.
Then, the above diagram \eqref{diagramfooo7} commutes.

\item
Let $M=\PP^1_{3,3,3}$, an elliptic orbifold sphere and $\mathbb{L}$ be the Seidel Lagrangian.
Then the above diagram \eqref{diagramfooo7} commutes.
\end{enumerate}
\begin{remark}
 In the toric case, LG mirror is written in $y_i$-variables with $y_i=\exp(x_i)$ and hence
the top form $dx_1\cdots dx_n$ is replaced by $\frac{dy_1}{y_1}\cdots \frac{dy_n}{y_n}$.
\end{remark}
\end{prop}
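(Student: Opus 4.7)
The plan is to use Theorem 1.1 above, which identifies $I\circ \ks\circ OC$ with the $B$-invariant $B=(\text{boundary-bulk})\circ \mathcal{F}^\LL$ on $HF(\LL,\LL)$ in both settings (1) and (2), to reduce diagram \eqref{diagramfooo7} to a Cardy-type computation on the matrix factorization side. Since this identification is at the chain-level (multi-crescent diagrams of Figures \ref{fig:supertrace} and \ref{fig:cobordism}), the residue pairing in \eqref{diagramfooo7} applied to elements $OC(\alpha), OC(\beta)$ with $\alpha,\beta\in HF(\LL,\LL)$ reduces to $\langle B(\alpha),B(\beta)\rangle_{res}$ on the $B$-side.

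The core step is then to evaluate $\langle B(\alpha),B(\beta)\rangle_{res}$ via the standard Cardy identity for matrix factorizations (Polishchuk--Vaintrob \cite{PV10}, Shklyarov \cite{Shk}, Dyckerhoff--Murfet \cite{DM}): the residue pairing of two boundary-bulk outputs equals the Kapustin--Li pairing \eqref{kl} of the underlying morphisms $\mathcal{F}^\LL(\alpha)$ and $\mathcal{F}^\LL(\beta)$. On the other side, by the open-closed Cardy identity in the Fukaya category (Costello \cite{Cos}, Ganatra--Perutz--Sheridan \cite{GPS}), $\langle OC(\alpha),OC(\beta)\rangle_{PD_M}$ equals the cyclic trace pairing of $\alpha$ and $\beta$ computed from the cyclic Floer pairing on $HF(\LL,\LL)$. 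Since the localized mirror functor $\mathcal{F}^\LL$ of \cite{CHL,CHL2} is a curved Yoneda-type $A_\infty$-functor, it sends the Floer cyclic pairing to the Kapustin--Li pairing, but with a rescaling factor $c_\LL^2=(\vol^{Floer}/\vol)^2$: one factor of $c_\LL$ enters for each of $\alpha$ and $\beta$ when converting the Floer volume class entering the cyclic trace to the flat volume form $dx_1\cdots dx_n$ used by the Saito residue, and the pairing being bilinear produces $c_\LL^2$. Concatenating the two Cardy identities with the intertwining gives
\[
\langle OC(\alpha),OC(\beta)\rangle_{PD_M} \;=\; \langle I\ks OC(\alpha),\,I\ks OC(\beta)\rangle_{res},
\]
which is \eqref{diagramfooo7} restricted to the image of $OC$.

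To complete part (1), we invoke \cite{FOOOtoric3}: in the toric Morse case, $QH^*(M)$ splits as a direct sum indexed by critical points of $W$, $\ks$ is a ring isomorphism onto $Jac(W)$, and each summand is hit by $OC$ from some $HF(\LL,b)$ with appropriate holonomy bounding cochain $b$. Thus the identity above extends from the image of $OC$ to all of $QH^*(M)_\LL\otimes QH^*(M)_\LL$, proving the commutativity of \eqref{diagramfooo7}. For part (2), the same argument runs on the work in progress of Amorim--Hong--Lau and the first author, which (as stated in the paragraph preceding this proposition) assumes the Cardy identity and gives the isomorphism $\ks$ and surjectivity of $OC$ for the Seidel Lagrangian in $\PP^1_{3,3,3}$.

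The main obstacle will be the precise bookkeeping of the factor $c_\LL^2$: one must verify that the rescaling from the cyclic Floer trace (as defined in the Ganatra--Costello setup) to the Kapustin--Li trace (as defined by \eqref{kl}) is exactly $c_\LL$ per insertion, with no further normalization drift from the supertrace signs or the choice of top form on the critical locus of $W_\LL$. This is precisely the content of Definition \ref{defn:fv} and the comparison of Floer and flat volume classes. A secondary subtlety, in the non-nef toric setting, is that the isometry theorem of \cite{FOOOtoric3} is formulated under a nef hypothesis; removing it requires a critical-point-by-critical-point analysis using the Morse assumption on $W$ and the fact that $Jac(W_\LL)$ then localizes to a product of copies of $\Lambda$, on each factor of which surjectivity of the corresponding $OC$ must be established separately.
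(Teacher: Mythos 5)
Your proposal has a genuine gap, and in two places the logic is either circular or misstated. The central step --- that the localized mirror functor sends the Floer pairing to the Kapustin--Li pairing up to the factor $c_\LL^2$ --- is precisely the ``open pairings'' statement that the paper \emph{derives from} this proposition by combining diagrams \eqref{diagram7} and \eqref{diagramfooo7}; assuming it here is circular. It also does not follow formally from $\mathcal{F}^\bL$ being a Yoneda-type functor: the factor $c_\LL$ is a nontrivial count of holomorphic polygons (a $q$-series, in fact a modular form for $\PP^1_{3,3,3}$), and extracting it is the entire content of the proof. A second problem is your ``$B$-side Cardy'' step: the residue pairing of two boundary-bulk images, $\langle\tau(f),\tau(g)\rangle_{res}$, is \emph{not} the Kapustin--Li pairing $\langle f,g\rangle_{KL}=\tr_{cl}(\tau(fg))$; already for $f=g=\mathrm{id}_X$ the former is the Euler characteristic $\chi(X,X)$ while the latter is $\tr_{cl}(\mathrm{ch}(X))$. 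Because of this, even granting all your inputs, the bookkeeping does not close: chaining your steps with Theorem \ref{mainthm} yields $\langle OC(\alpha),OC(\beta)\rangle_{PD_M}=c_\LL^2\,\langle I\ks OC(\alpha),I\ks OC(\beta)\rangle_{res}$ rather than the asserted equality (or else the ``cyclic trace pairing'' of your first step and the ``Floer cyclic pairing'' of your third step are different objects and the chain does not connect at all).

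What the paper actually does is more computational and bypasses the open-string pairing entirely. In the toric Morse case it takes FOOO's isometry $\langle v,w\rangle_{PD_M}=\langle\ks(v),\ks(w)\rangle_{res_Z}$ as input and proves $\langle\cdot,\cdot\rangle_{res_Z}=c_\LL^2\langle\cdot,\cdot\rangle_{res}$ directly: $HF(\LL,\xi)$ is a Clifford algebra, the annulus identity (Theorem \ref{fooothm1}) gives $\langle i_*(e''_{\{1,\dots,n\}}),i_*(e''_{\{1,\dots,n\}})\rangle_{PD_M}=\det(\mathrm{Hess}(W))$, and Theorem \ref{thm:vd} lets one replace $e''_{\{1,\dots,n\}}$ by its degree-$n$ part $c_\LL\cdot vol_L$, whence $(c_\LL)^2 Z=\det(\mathrm{Hess}(W))$ and $\langle \be_u,\be_u\rangle_{PD_M}=1/Z=c_\LL^2\langle \be_u,\be_u\rangle_{res}$, which suffices since the Morse hypothesis reduces everything to the idempotents. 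For $\PP^1_{3,3,3}$ no such machinery is available; the paper computes the residue of $\ks(PD(pt))$ explicitly and reduces the statement to the $q$-series identity $c_\LL^2\big(\phi\, q\partial_q\psi-\psi\, q\partial_q\phi\big)=8\phi(27\phi^3-\psi^3)$, proved via modular forms and Sturm's bound. Your proposal contains no substitute for either computation, so the gap is essential rather than cosmetic.
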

For $\PP^1_{3,3,3}$, we compute the residue of the Kodaira-Spencer map directly to prove the proposition
in Theorem \ref{thm:9}.

For toric manifolds,  we prove it in Proposition \ref{prop:m}. The above proposition in that case is a reformulation of the result of \cite{FOOOtoric3}.The isomorphism $I$ does not appear in their work, but we show that their result is equivalent to the above diagram: Fukaya-Oh-Ohta-Ono introduced the new residue pairing on $B$-side $\langle \cdot, \cdot \rangle_{res_Z} $ using Floer theoretic $Z$-invariant. We find that the difference of $\langle \cdot, \cdot \rangle_{res_Z} $ and the complex geometric residue pairing $\langle \cdot, \cdot \rangle_{res} $ is recorded in the isomorphism $I$.

We can combine diagram \eqref{diagram7} and diagram \eqref{diagramfooo7}. For this it is more
convenient to consider the traces instead.

\begin{equation}\label{firstdiagram}   \xymatrix{
	HF(L,K) \times HF(K,L) \ar[d]^{\mathcal{F}^\bL \otimes \mathcal{F}^\bL}  \ar[r]^-{product} & HF(L,L) \ar[r]^-{\int_L} \ar[d]^{\mathcal{F}^\bL} & \Lambda \ar[d]^{(c_\bL)^2 \cdot -} \\
	MF(\mathcal{F}^\bL(L),\mathcal{F}^\bL(K)) \times MF(\mathcal{F}^\bL(K),\mathcal{F}^\bL(L))   \ar[r]^-{product} & MF(\mathcal{F}^\bL(L),\mathcal{F}^\bL(L))  \ar[r]^-{KL} & \Lambda }
\end{equation}
 The first square commutes because the functor respects the product structures. 
 
As a corollary, we obtain the following comparison result.
\begin{theorem}
Let $M$ be a general toric manifold such that the mirror $W$ has only Morse singularities and $\bL$ a Lagrangian torus fiber.
(or $M = \PP^{1}_{3,3,3}$ and $\bL$ a Seidel Lagrangian.)
\begin{enumerate}
\item (Open pairings)
Lagrangian Floer pairing $\langle,\rangle_{PD_L}$ and Kapustin-Li pairing can be identified by multiplication of $c_\bL^2$.
i.e. for homological mirror functor $\mathcal{F}$, we have
$$ \langle v, w \rangle_{PD_L} =  c_\bL^2 \langle \mathcal{F}(v), \mathcal{F}(w) \rangle_{KL}$$
\item (Closed pairings)
Pairing for quantum cohomology and residue pairing for Jacobian ring
can be identified by multiplication of $c_\bL^2$.
$$ \langle v, w \rangle_{PD_M} = c_\bL^2 \langle \ks(v), \ks(w) \rangle_{Res}$$
\end{enumerate}
In $\PP^{1}_{3,3,3}$ case, $c_\LL$ is a modular form given by 
\[ c_\LL=\sum_{k \in \mathbb{Z}} (-1)^k q^{(6k+1)^2}.\]
For $M$ toric manifold which is nef and if the mirror potential $W$ is Morse, then we have $c_\LL=1$.
\end{theorem}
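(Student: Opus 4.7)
The plan is to deduce the theorem from the two main diagrams already established in the excerpt, together with an explicit computation of the conformal factor $c_\bL$ in the two geometric cases.

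First I would set up the proof of (1), the open pairing comparison. The large diagram \eqref{firstdiagram} has two squares. The left square commutes because $\mathcal{F}^\bL$ is an $\AI$-functor, hence respects the higher product structures, so composition on $A$-side is intertwined with composition on $B$-side. For the right square, I unpack $\int_L$ (Poincaré duality / Lagrangian Floer trace) and $KL$ (Kapustin--Li trace) using Costello-type Calabi--Yau structure: both are traces built from open-closed maps into a point, so it suffices to compare the traces $HF(L,L)\to\Lambda$ and $MF(\mathcal{F}^\bL(L),\mathcal{F}^\bL(L))\to\Lambda$ after applying $\mathcal{F}^\bL$. By the multi-crescent Cardy identity in (the already stated) diagram \eqref{diagram7}, for any $\alpha\in HF(L,L)$ the image $(\textrm{boundary-bulk}\circ\mathcal{F}^\bL)(\alpha)$ equals $(I\circ\ks\circ OC)(\alpha)=c_\bL\cdot\ks(OC(\alpha))\,dx_1\cdots dx_n$. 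Taking residues and using diagram \eqref{diagramfooo7} (which in particular implies $\langle 1,\ks(OC(\alpha))\rangle_{res}$-type identities are conformally related to $\langle 1,OC(\alpha)\rangle_{PD_M}$), one gets the factor $(c_\bL)^2$: one $c_\bL$ from the $I$ in front of $\ks(OC(\alpha))$, and another $c_\bL$ from the comparison of $\ks$ as an isometry up to $(c_\bL)^2$. Hence $\int_L\alpha = (c_\bL)^2\,KL(\mathcal{F}^\bL(\alpha))$, and by polarization this yields $\langle v,w\rangle_{PD_L}=(c_\bL)^2\langle\mathcal{F}(v),\mathcal{F}(w)\rangle_{KL}$ for all $v,w$.

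Part (2), the closed pairing identification, is then immediate from diagram \eqref{diagramfooo7}, which the preceding Proposition asserts commutes both in the toric Morse case and for $\PP^1_{3,3,3}$ with the Seidel Lagrangian. The content is exactly $\langle v,w\rangle_{PD_M}=(c_\bL)^2\langle\ks(v),\ks(w)\rangle_{Res}$; we only need to quote this Proposition.

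It remains to evaluate $c_\bL$ in the two geometric cases. For a nef toric manifold with Morse mirror, the reference torus $\bL$ is a Lagrangian torus fiber and the Floer volume class, defined as a sum over holomorphic-disk contributions modifying the classical top form $dx_1\cdots dx_n$ (equivalently $\tfrac{dy_1}{y_1}\cdots\tfrac{dy_n}{y_n}$), should coincide with the classical volume class because in the nef case the leading $A_\infty$ structure coefficients producing the volume form have no higher quantum corrections; comparing with \cite{FOOOtoric3} this matches their statement that $\ks$ is already an isometry, forcing $c_\bL=1$. I would verify this by directly computing $vol^{Floer}$ from Definition \ref{defn:fv} on a toric fiber and matching it with $vol$ term-by-term.

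For $\PP^1_{3,3,3}$, the hard step — and in my view the main technical obstacle — is the explicit identification $c_\bL=\sum_{k\in\mathbb{Z}}(-1)^k q^{(6k+1)^2}$. The Seidel Lagrangian bounds a countable family of triangular holomorphic polygons whose areas lie in an arithmetic progression dictated by the $(3,3,3)$-orbifold structure, and the Floer volume class is obtained by summing the contributions of these polygons with signs determined by the $\AI$-structure and orientations. I would enumerate these polygons, record their signed $q$-weighted contributions, and recognize the resulting series as the stated theta-type modular form. Convergence is automatic in the Novikov ring $\Lambda_0$, so the only real work is the combinatorics of signs and areas, which I expect to match after regrouping the sum according to winding number $k$ around the orbifold points and using the reflection symmetry $k\leftrightarrow -k-1$ or similar, producing the alternating-sign square-exponent expression $\sum_k(-1)^k q^{(6k+1)^2}$.
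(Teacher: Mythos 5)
Your overall strategy matches the paper's: part (2) is indeed just a restatement of the commutativity of \eqref{diagramfooo7} (Proposition \ref{prop:m} and Theorem \ref{thm:9}), part (1) is obtained by chaining the multi-crescent Cardy identity \eqref{diagram7} with \eqref{diagramfooo7} and the trace compatibility $\tr_{cl}\circ OC=\tr_L$ of \eqref{trco}, and your enumeration of signed holomorphic triangles on the Seidel Lagrangian is exactly what Lemma \ref{lem:cl} does. However, there is a concrete gap in your derivation of the exponent in part (1). Writing out your own chain: the Kapustin--Li trace of $\mathcal{F}^\bL(\alpha)$ is (up to sign) the residue of the boundary-bulk image, which by \eqref{diagram7} and Corollary \ref{cor:ll} equals $c_\bL\cdot \mathrm{Res}\big(\ks(OC(\alpha))\,dx_1\cdots dx_n\big)$ --- one factor of $c_\bL$ from $I$; and $\mathrm{Res}\big(\ks(a)\,dx_1\cdots dx_n\big)=\langle \ks(a),\ks(\be)\rangle_{res}$, which by \eqref{diagramfooo7} equals $c_\bL^{-2}\langle a,\be\rangle_{PD_M}$ --- a factor of $c_\bL^{-2}$, not ``another $c_\bL$'' as you assert. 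The isometry defect of $\ks$ enters as a full square with a definite sign of exponent, so the two sources you cite combine to an odd power of $c_\bL$ and cannot by themselves produce the $c_\bL^{2}$ in the statement. You need either to locate a further factor (e.g.\ in the normalization of the residue pairing on $Jac(W_\bL)dx_1\cdots dx_n$ versus on $Jac(W_\bL)$, or in the relation between the Kapustin--Li trace and the boundary-bulk map) or to carry the bookkeeping out explicitly on a test pair such as $v=\be_L$, $w=vol_L$; as written, the sentence ``one gets the factor $(c_\bL)^2$'' does not follow from the ingredients you invoke.

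A second, smaller issue is the nef case. The paper does not prove $c_\bL=1$ by checking term-by-term that $vol^{Floer}=vol_L$; that would require controlling the quasi-isomorphism $\Phi$ to the cyclic model (the classes $e_i''=\Phi(e_i')$ receive corrections even when $b_+=0$), and nef-ness does not formally kill those corrections. Instead the paper argues indirectly: $(c_L)^2\,Z=\det\mathrm{Hess}(W)$ holds in general, while Theorem \ref{thm:fooo1}(3) gives $\langle\cdot,\cdot\rangle_{res_Z}=\langle\cdot,\cdot\rangle_{res}$ for nef $M$, i.e.\ $Z=\det\mathrm{Hess}(W)$; hence $c_L^2=1$, and $c_L=1$ because $c_L\in 1+\Lambda_+$. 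Your proposed direct computation is not obviously feasible and should be replaced by this argument.
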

The conformal factor $c_\LL$ in $\PP^{1}_{3,3,3}$ case is computed in Lemma \ref{lem:cl}

We believe that the conformal factor $c_\bL$ is related to the choice of a primitive form of Saito \cite{Sa}
over the universal unfolding of $W_\bL$  (which can be obtained as a bulk-deformed version of the construction in this paper).  We hope to investigate the relationship in the future.

\section*{Acknowledgements}
We would like to thank Kaoru Ono, Atsushi Takahashi, Lino Amorim, Hansol Hong and Siu-Cheong Lau for helpful conversations. We would like to thank Dohyeong Kim at Seoul National University for the proof of mirror identity in Appendix \ref{app2}.
The work of C.H. Cho is supported by Samsung Science and Technology Foundation under Project Number SSTF-BA1402-05.
\section{Pairings in mirror symmetry}
\subsection{Open-closed maps and pairings}
In this subsection, we recall the expected properties of the open and closed pairings from the
point of view of open-closed 2-d topological Field theory.
We refer readers to  \cite{La}, \cite{Seg} for more details.
Recall that 2-d TFT is roughly given by a functor from 2-d cobordism category to the category of  (graded) vector spaces,
with the sewing condition.  For the open-closed theory, an oriented surface is allowed to have two type of boundaries. 
Let $\mathcal{B}_0$ be the set of boundary conditions.  Then, one type of boundary is either closed string boundary (a closed circle) or an open string boundary which is an oriented interval with labels in $\mathcal{B}_0$ at each end points. The other is a boundary sector which is a open or closed curve carrying a single label in $\mathcal{B}_0$.

Sewing operation is allowed only between two closed or open string boundaries, and the orientations (and labels for open case) of boundaries should match.
Sewing condition means that  that sewing operation of surfaces corresponds to a composition of  morphisms between vector spaces.

For a closed string boundary, TFT assigns a vector space $\mathcal{C}$, and for an open string boundary which is oriented from $a$ to $b$, TFT assigns
a vector space $\mathcal{O}_{ab}$. 
Pair of pants corresponds to the commutative product structure on $\mathcal{C}$, and 
 open version of pair of pants corresponds to the composition in a $\C$-linear category $\mathcal{B}$ whose objects correspond to $\mathcal{B}_0$. Both $\mathcal{C}$ and $\mathcal{O}_{aa}$ have non-degenerate traces
$$\tr_{cl}:\mathcal{C} \to k, \tr_a:\mathcal{O}_{aa} \to k,$$
which also gives rise to closed and open pairings
 $$\langle v, w \rangle_{cl} = \tr_{cl}(v \cdot w), \langle \phi_1,\phi_2 \rangle_{op} = \tr_a (\phi_2 \circ \phi_1) $$
 for $\phi_1 \in \mathcal{O}_{ab}, \phi_2 \in \mathcal{O}_{ba}$. Here non-degeneracy follows from the gluing axiom.
 Now, there are open-closed (boundary-bulk) and closed-open (bulk-boundary) maps
 $$ CO_a: \mathcal{C} \to \mathcal{O}_{aa}, OC^a:\mathcal{O} \to \mathcal{C}.$$
$CO_a$ is an algebra homomorphism preserving the identity element ($CO(\be_{cl})= \be_{op}$). $CO_a$ and $OC^a$ are adjoints to each other
\begin{equation}\label{coad}
\langle v , OC^a(\phi) \rangle_{cl} = \langle CO_a(v),\phi \rangle_{op}
\end{equation}
By setting $v =\be_{cl}$ in the above adjoint identity, we obtain the relation between open and closed traces:
\begin{equation}\label{trco}
 \tr_{cl}(OC^a(\phi)) = \tr_{a} (\phi) 
 \end{equation}
These properties give $\mathcal{C}$ (resp. $\mathcal{B}$) a structure of Frobenius algebra (resp. a Calabi-Yau category).

Recall that 2-d topological conformal field theory where one additionally consider chain model of the moduli space of bordered Riemann surfaces
gives rise to the data of Calabi-Yau $\AI$-category (see  Kontsevich-Soibelman \cite{KS}, Costello \cite{Cos}).
Here Calabi-Yau structure is given by a pairing, which is compatible with $\AI$-operation.
Such a structure is an important ingredient to reconstruct closed theory from open theory.
We remark that there is more general notion of a (non-compact) Calabi-Yau $\AI$-category by Kontsevich-Soibelman by $A^! \cong A[n]$ where $A^!$ is the inverse dualizing bimodule of $A$.
We refer readers to Kontsevich-Soibleman \cite{KS},  Ganatra \cite{G} for more detailed constructions.

%
%
%We remark that cyclic symmetry of an $\AI$-algebra is not a homotopy invariant, and the relevant homotopy notion is discussed in \cite{C}.
%There is a weaker notion, called infinity inner product(\cite{T}), which is given by a quasi-isomorphism between $A$ and $A^*$ as $A$-bimodules.
%The latter enables us to relate deformation theory (Hochschild cohomology with coefficients in $A$) and  Hochschild homology theory.
%
%There is also a notion of a Calabi-Yau structure due to Kontsevich- Soibelman, that enhances Ginzburg’s definition $A!\cong A[-n]$ (or $A^n \cong A[n]$) by a lift to the (negative) cyclic complex. It would be good to include this into the discussion here and provide a reference.

%%%%%%

%Let $\vol_{cl}$ (resp. $\vol_{op}$) be a closed (resp. open) state such that $ \tr_{cl}(\vol_{cl})=1$ (resp. $\tr_{op}(\vol_{op})=1$)
%Then, using the open-closed relation, we find that
%$$\tr_{cl}(OC(\vol_{op})) = \langle \be_{cl} , OC(\vol_{op}) \rangle_{cl} =   \langle CO( \be_{cl}),\vol_{op} \rangle_{op} =     \langle \be_{op},\vol_{op} \rangle_{op} = 1$$
%Since $\tr_{op} (\vol_{op}) = 1$,  we have checked that
%$$\tr_{op} (\vol_{op})  = \tr_{cl}(OC(\vol_{op}))$$

Now, let us recall  $A$-model for symplectic manifold, and $B$-model for Landau-Ginzburg model,
and relevant structures that we have discussed.
\subsection{$A$-model}
For a compact symplectic manifold $M$,  closed string $A$-model is given by quantum cohomology of $M$.
As a module it is isomorphic to singular cohomology $H^*(M;\Lambda)$ but its cup product structure is deformed by $J$-holomorphic curves. 
The associated pairing is Poincare duality pairing and thus its closed trace is given by the integration 
$$\tr_{cl} = \int_M : H^{2n}(M,\Lambda) \to \Lambda.$$

Open $A$-model of $M$ is given by Fukaya category $\mathcal{F}u(M)$ which is a filtered $\AI$-category. Its objects are  compact Lagrangian submanifolds, which are possibly immersed relatively spin, and equipped with flat complex line bundle on it. For simplicity, we denote each object by the Lagrangian $L$.
 Its morphisms $Hom(L_1,L_2)$ are given by Lagrangian Floer complex $CF(L_1,L_2)$ and $m_k$ operations are defined by counting $J$-holomorphic polygons with Lagrangian boundary conditions.
 For its construction we refer readers to \cite{FOOO} and \cite{Seidelbook}.
%In fact, one needs to take bounding cochains, and take twisted complexes to have a triangulated $\AI$-category, and. if we take $H^0$ and split closure of it, then we obtain a derived Fukaya category.

The open trace $\tr_L: HF^*(L,L) \to \Lambda$ is  given by a composition
$$ HF^*(L,L)  \stackrel{PSS}{\longrightarrow} H^*(L,\Lambda) \stackrel{\int_L}{\longrightarrow} \Lambda$$
where  PSS denotes well-known Piunikhin-Salamon-Schwarz isomorphism (when it exists).
This gives rise to  homologically non-degenerate, graded symmetric pairing (see \cite{Seidelbook} (12e)). The graded symmetry of the pairing follows from the graded symmetry of the product.
%If $L_1,L_2$ intersect transversely, then an intersection point $p$ gives rise to two generators $p \in CF(L_1,L_2)$, $\overline{p} \in CF(L_2,L_1)$ and we may define the pairing $\langle p, \overline{p} \rangle  = (-1)^{|p|}

Closed-open and open-closed maps have been defined  in this setting and they have been proved to be very useful
(see  \cite{BC}, \cite{FOOO}, \cite{Se1} for example).
In the simplest setting, we have
$$OC_L: HF^*(L,L) \to QH^*(M), CO^L:QH^*(M) \to HF^*(L,L)$$
These maps can be defined by considering the moduli space $\mathcal{M}_{1,1}(M,L,\beta)$ of  $J$-holomorphic maps from a disk with one interior $z^+$ and one boundary marked point $z_0$
of homology class $\beta$. If $z^+$ is an input and $z_0$ is an output marking, then we obtain $CO$-map, and the other way around for $OC$-map.
If one uses the same perturbation for these two moduli spaces, then we should have the adjoint property of $CO$ and $OC$ \eqref{coad}.
See \cite{BC} Theorem 2.1.1 for the case of  monotone Lagrangians.  For general toric manifolds, perturbations are actually different (called $p$ and $q$-perturbation) but still this adjoint property is proved in \cite{FOOOtoric3} Theorem 3.3.8.

In \cite{Fu}, Fukaya has constructed a cyclic filtered $\AI$-algebra for a Lagrangian $L$.

%In the case of Fukaya category, 
%recall that the boundary pairing is given by the map
%$$ \langle \cdot, \cdot \rangle_L:  : HF^*(L, L') \otimes HF^{n-*}(L, L') \to HF^n(L,L) \to k.$$
%Boundary bulk map in the A-model is given by  the open-closed map $i_*: HF(L,L) \to QC(M)$.
%
%\begin{lemma} 
%If $L$ is monotone or if $M$ is toric manifold, and $L$ is a torus fiber, then we have   $ \langle f, g \rangle_L = \int_M   i_*(fg)$
%where $\int_M : QC(M) \to k$.
%\end{lemma}
%\begin{proof}
%For a toric manifold $M$,
%For $L$ monotone, it can be shown as follows. Open-closed map $i_*([pt])$ is given by
%considering $J$-holomorphic discs with one insertion (which is $[pt]$) and with one output in the interior of the disc component.
%In fact, there exist a contribution from  the constant disc at $pt$, which gives the point class in $QC(M)$.  By easy dimension counting, one can show that the image cannot be a point class in $M$ if the Maslov index of the holomorphic disc is positive. Hence it holds for the monotone case.
%\end{proof}
%It is expected to hold in general cases.
%
%

\subsection{Laundau-Ginzburg $B$-model}
Consider $R = \C[x_1,\cdots,x_n]$ or $\C[x_1,x_1^{-1},\cdots,x_n,x_n^{-1}]$.
Given a Landau-Ginzburg potential $W \in R$,  closed string Landau-Ginzburg $B$-model is given by Jacobian ring 
$$Jac(W) = \frac{R}{( \partial_{x_1} W, \cdots,  \partial_{x_n} W)}$$
In this case,  trace $\tr_{cl}$ is given by the residue pairing
$$\tr_{cl}(f)= \frac{1}{(2\pi i)^n} \oint_{\{|\partial_{x_i}W| = \epsilon_i\}} \frac{f dx_1\wedge \cdots \wedge dx_n}{ \partial_{x_1} W \cdots  \partial_{x_n} W}, \;f \in Jac(W).$$
%For simplicity, we assume that
%$W$ has only one critical value at $0$ in this section.   {\bf (Novikov coefficients?)}
If $W$ has only non-degenerate critical points, then residue can be easily computed to be
$$ \tr_{cl}(f) = \sum_{ z \in crit(W)} \frac{f(z)}{Hess (W(z))}$$

Open string Laundau-Ginzburg $B$-model is 
the dg-category of matrix factorizations.
\begin{definition}\label{def:mf}
Fix $\lambda \in \C$.
 A {\em matrix factorization} of $W-\lambda$ is a $\Z/2$-graded trivial vector bundle $E = E^0 \oplus E^1$
 on $Spec(R)$ with an endomorphism 
 \[ Q= \left[ {\begin{array}{cc} 0 & Q_{01} \\ Q_{10} & 0 \ \end{array} } \right], \; \textrm{where} \; Q_{ij} \in \Hom(E^j,E^i), \]
 satisfying $Q^2 = (W-\lambda) \cdot Id$.  In other words, $Q_{01}Q_{10} = (W-\lambda) \cdot Id_{E^0}$ and $Q_{10}Q_{01} = (W-\lambda) \cdot Id_{E^1}$.
 \end{definition}
 
Matrix factorizations of $W-\lambda$ form a differential $\Z/2$-graded category $MF_\lambda(W)$ as follows: 
\begin{definition}
Given two matrix factorizations $(E,Q), (F,Q')$, $\Z/2$-graded morphisms from $(E,Q)$ to $(F,Q')$ are
given by homomorphisms 
 \[ \Phi= \left[ {\begin{array}{cc} \Phi_{00} & \Phi_{01} \\ \Phi_{10} & \Phi_{11} \ \end{array} } \right], \; \textrm{where} \; \Phi_{ij} \in \Hom(E^j,F^i). \]
Compositions of morphisms 
$$\Hom\big((E_1,Q_1),(E_2,Q_2)\big) \times \Hom\big((E_0,Q_0),(E_1,Q_1)\big) \to \Hom\big((E_0,Q_0),(E_2,Q_2)\big)$$
 are defined in the obvious way.
The differential on a morphism is defined as
$$ \delta \Phi = Q \Phi  - (-1)^{|\Phi|} \Phi Q$$
for morphisms of homogeneous degrees.
These form a differential graded category $MF_\lambda(W)$.
\end{definition}
Its derived category $[MF_\lambda(W)]$ is equivalent to the derived category of singularity of $W$,  $D_{sg}(W^{-1}(
\lambda))$ by Orlov \cite{Or}.
Here, $D_{sg}(Z)$ is defined by the quotient of $D^b(Z)$ by $Perf(Z)$ which is a thick subcategory consisting of complexes which are quasi-isomorphic to bounded complexes of projectives. If $Z$ is smooth then $D_{sg}(Z)$ is trivial, hence the category measures how the variety is singular.

The pairing for LG $B$-model was suggested by  by Kapustin and Li \cite{KL}.
\begin{definition}
For $f\in \Hom_{[MF(W)]}(X,Y)$ and $g\in \Hom_{[MF(W)]}(Y,X[n])$,
\[  \langle f, g \rangle_{KL} =  \frac{1}{(2\pi i)^n n!}\oint_{\{|\partial_{x_i} W|=\epsilon\}}\frac{\mathrm{Str}(fg(dQ_Y)^{\wedge n})}{\partial_{x_1} W \partial_{x_2} W \cdots \partial_{x_n} W}\]
where $Q_Y$ is the structure map of the matrix factorization $Y$.
\end{definition}
Here $$(dQ)^{\wedge n} := (dQ)^{\wedge n} = (\partial_1Q \cdot dx_1 + \cdots + \partial_nQ \cdot dx_n)^{\wedge n}$$ which is given by matrix multiplication of matrices of one forms via wedge products. We refer readers to Appendix B for the definition and properties of $\mathrm{Str}$.

Many people have contributed for the mathematical understanding of the above pairing.
The existence of non-degenerate pairing without explicit description was established by Auslander \cite{Aus}
and Murfet \cite{Mur} gave a mathematical derivation of the above formula and the proof of non-degeneracy. 

We have $OC_Y: Hom_{MF(W)}(Y,Y) \to Jac(W)dx_1\wedge \cdots \wedge dx_n$ defined by
 $$ OC_Y(f) = \mathrm{Str}(f(dQ_Y)^{\wedge n})$$
Thus we have the compatibility of closed and open string traces under boundary-bulk map  \eqref{trco}.
Segal\cite{Seg}, Polishchuk-Vaintrob \cite{PV10} and Dyckerhoff-Murfet\cite{DM}   deepened the understanding 
boundary-bulk (open-closed) map, and we refer readers to these references for more details.
%
%$\Omega_W[n]$
%... Then one can define the {\em boundary-bulk map} for any object $C\in Ob(\calC)$:
%\[\tau^C:hom_\calC(C,C) \to CH_*(\calC), \alpha \mapsto \alpha.\]
%Here $\alpha$ is contained also in $CH_*(\calC)$ because $hom_\calC(C,C)$ is one of its components. It is straightforward that $\tau^C$ is a chain map where $hom_\calC(C,C)$ is equipped with differential $m_1$.
%
%Again, for dg categories the boundary-bulk map can be defined by Morita theory, and it is known that two definitions are equivalent. If $\calC$ is a category of matrix factorizations of $W\in R[x_1,...,x_n]$, then $CH_*(\calC)$ is quasi-isomorphic to the Koszul complex $K_\bullet$ of the sequence $(\partial_{x_1}W,...,\partial_{x_n}W),$ hence it has homology $\Jac(W):=R[x_1,...,x_n]/(\partial_{x_1}W,...,\partial_{x_n}W)$ concentrated in a certain degree. Then Polishchuk and Vaintrob explicitly computed $[\tau^C]: \Hom_\calC(C,C) \to \Jac(W)$ from Morita theoretic definition(from now on we also denote the homology level map just by $\tau^C$, not using brackets).

So far, $R$ is a (Laurant) polynomial algebra, but we need to consider more general algebras
with Novikov field coefficients:
$$R = \Lambda \ll x_1,\cdots,x_n \gg , R = \Lambda \ll x_1, x_1^{-1},\cdots,x_n,x_n^{-1} \gg$$
Here, $\ll,\gg$ denotes the completion with respect to the  filtration of Novikov field $\Lambda$.
In this case, we follow Dyckerhoff-Murfet \cite{DM} and use Grothendieck residues in the above definitions.
In \cite{DM}, they proved that these generalizations provide the relevant pairings in TCFT.

\section{Localized mirror functors}\label{sec:lmf}
In this section, we recall the formalism of localized mirror functors developed in \cite{CHL} and \cite{CHL2}.
Later we will use this to understand Kapustin-Li pairing in terms of Lagrangian Floer theory.
Here is the rough idea of the formalism. Fix a reference object $\bL$ in a filtered $\AI$-category $\mathcal{C}$, and consider its deformation theory. Namely, solve its weak Maurer-Cartan equation, and obtain a potential function $W_\bL$ on the solution space.
Then the first author with Hong and Lau \cite{CHL} defined a canonical $\AI$-functor from the original $\AI$-category $\mathcal{C}$ to the dg category of matrix factorizations of $W_\bL$. This construction may be considered as curved  Yoneda embedding with respect to $\bL$ and its weak Maurer-Cartan element. This can be generalized to the non-commutative Landau-Ginzburg models and quivers as discussed in \cite{CHL3}. 

\subsection{Deformation by Maurer-Cartan elements}
We briefly review the Maurer-Cartan formalism from Fukaya-Oh-Ohta-Ono \cite{FOOO} to set the notations.
Maurer-Cartan elements may be understood as a formal deformation of a given object. The idea of localized
mirror functor is to look at formal deformation space of a reference Lagrangian object $\bL$, and view other objects through these deformations. 

Consider a filtered unital $\AI$-category $\CA$ over $\Lambda$.
Novikov ring $\Lambda_0$ is defined as usual
\begin{equation*}
\Lambda_0 := \left\{ \sum_{j=0}^{\infty} c_j T^{\lambda_j} \left| c_j (\neq 0),\lambda_j \in \R, \lambda_j \geq 0, \lim_{j\to\infty}\lambda_j \nearrow +\infty\right.\right\}
\end{equation*}
For each $\lambda \in \R$, $\Lambda_0$ has a filtration
\begin{equation*}
F^{\lambda}\Lambda_0 := \left\{ \left.\sum c_j T^{\lambda_j} \in \Lambda_0 \right| \lambda_j \geq \lambda \right\}.
\end{equation*}
We denote $\Lambda_+ = F^{>0}\Lambda_0$ and the localization of $\Lambda_0$ gives the Novikov field $\Lambda$.

For any object $L$, we have a closed element $\be_L \in \Hom^0(L,L)$ such that
$$m_2(\be_L,w) = (-1)^{|w|}m_2(w,\be_L) = w, m_{k\neq2}(\cdots,\be_L,\cdots)=0$$
We can consider the following weak Maurer-Cartan equation 
\begin{equation}\label{mc}
 m(e^b) = m_0(1)+ m_1(b)+ m_2(b,b) + m_3(b,b,b) + \cdots = W(b) \cdot \be_L,
\end{equation}
for $b \in F^+ \Hom(L,L)$.
Given a weak Maurer-Cartan element $b$, one can deform the given $\AI$-structure on $Hom^*(L,L)$ by
$$m_k^b(w_1,\cdots,w_k) = m(e^b ,w_1 ,e^b ,\cdots,  e^b, w_k, e^b).$$
Here, the insertion of $e^b = 1 + b + b\otimes b + \cdots$ means  taking all possible summations over
the arbitrary number of insertions of $b$'s.
Note that $b$ solves weak Maurer-Cartan equation if $m_0^b(1)$ is a multiple of unit, and hence
$\{m_k^b\}_{k \geq 1}$ forms an $\AI$-algebra themselves. In particular, $(m_1^b)^2=0$ and defines
a chain complex.

Given weak Maurer-Cartan elements $\xi_i$ from $Hom(L_i,L_i)$ for $i=0,\cdots,k$,
we can deform $\AI$-structure on $\CA$ by
$$m_k^{\xi_0,\cdots,\xi_k}(w_1,\cdots,w_k) =  m(e^{\xi_0}, w_1 ,e^{\xi_1}, \cdots,  e^{\xi_{k-1}}, w_k, e^{\xi_k})$$

Localized mirror formalism has been developed in two settings, when $\bL$ is an immersed Lagrangian, or when $\bL$ is a torus $T^n$.
(Mixed type can be defined easily by combining two approaches, but for simplicity we will consider these two types only).

\subsection{Immersed cases}
We recall the construction of \cite{CHL} to which we refer readers for details.
Later in this paper, we will consider the particular case of $\mathbb{P}^1_{3,3,3}$
as an example.
Let $\bL$ be an immersed Lagrangian object in the $\Z/2$-graded Fukaya category $\mathcal{F}u(M)$.

Let $\{X_1,...,X_n\}$ be odd immersed generators in $CF^{1}(\LL,\LL)$.
We introduce formal variables $x_1,\cdots,x_n$ (with values in $\Lambda_+$ or $\Lambda_0$ with suitable convergnece assumption)
and set 
$$b = x_1 X_1 + \cdots + x_n X_n.$$
Denote by $Y$ the solution space of weak Maurer-Cartan equation \eqref{mc} for $\LL$, $m(e^b) = W(b) \cdot \be_{\LL}$,
and this defines a potential function $W_{\LL}:Y \to \Lambda_0$.
For simplicity, let us assume $Y =(\Lambda_0)^n$ (i.e. any $x_0, \cdots, x_n \in \Lambda_0$ solves the weak Maurer-Cartan equation).
We have $W(b) \in R = \Lambda_0 \ll x_1,\cdots,x_n \gg$.

The mirror functor $\mathcal{F}^\bL$ on objects works as follows.
Given an unobstructed object $K$ (i.e. with $m_0^K = 0$), consider $Hom(K,\LL)= CF(K,\LL)$ given by the Floer complex, which is $\Z/2$-graded, together with deformed differential
$m_1^{0,b}$. The deformed $\AI$-equation provides $$(- m_1^{0,b})^2 = \{ W_{\LL}(b) - 0 \} \cdot Id$$
which can be interpreted as matrix factorization of $W_{\LL}$, where $-m_1^{0,b}$ is a matrix with entries in $R$,
regarded as an $R$-module homomorphism between R-modules generated by even or odd intersection points.

This can be generalized for any weakly unobstructed object $(K,b')$ with 
$$(-m_1^{b',b})^2 = \{ W_{\LL}(b)  - W_K(b') \} \cdot Id$$
In this case, $b'$ does not involve any variables, and hence $W_K(b') \in \Lambda$ is a constant,
and it provides matrix factorization of $W_{\LL}$ at value $W_K(b')$. 
This can be made into an $\AI$-functor as we will explain in the last part of this section.

\subsection{Toric cases}\label{sec:toriclm}
We recall the construction of \cite{CHL2} to which we refer readers for details.
In the toric case, the idea in the immersed case does not immediately generalize.
Main reason is due to the choice of mirror variable. 
First, in toric case, the standard mirror variable is obtained from SYZ perspective based on the choice of Lagrangian torus fiber $L(u)$ and its holonomy. To apply our approach, we fix a Lagrangian and we only use the (generalized) holonomy part as a mirror variable. The second, more serious issue is that  in this case, we need a new variable $y = e^x$ and matrix factorizations in $y$ variable, but naive generalization of the above idea produces some factorizations in $x$-variables, which does not
provide matrix factorizations in $y$-variables.
In \cite{CHL2} the following approach was introduced to overcome these difficulties.
The idea is to consider a flat line bundle over the Lagrangian, whose holonomy is concentrated near co-dimension one tori (which is
called hypertori).

Consider basis $E_i \in H_1(\LL,\Z)$ for $i=1,\cdots,n$ and dual basis $E_i^* \in H^1(\LL,\Z)$.
We choose closed one forms $\theta_i \in \Omega^1(\LL,\C)$ whose cohomology is given by $E_i^* \in H^1(\LL,\Z)\otimes \C$.
We consider a flat $\C$-line bundle $\LL \times \C$, with a choice of a connection (for $x_i \in \C^*$)
$$\nabla = d - 2\pi \sqrt{-1}\sum_{i=1}^n x_i \theta_i.$$

Then, its holonomy along closed loops in $\LL$ is given by $\rho^b$,
where we set  $$b = \sum_{i=1}^n x_i \theta_i \in H^1(\LL,\C)/H^1(\LL,\Z),$$
$$ \rho^{b} = \pi_1(\LL) \to \C^*, \rho^{b}(\gamma) = \exp\big(2\pi \sqrt{-1}(b,\gamma )\big).$$
We can also deform $m_{k,\beta}$ to $m_{k,\beta}^\rho$ for each $\beta \in \pi_2(M,\LL)$ as
$$m_{k,\beta}^\rho(w_1,\cdots,w_k) =m_{k,\beta}(w_1,\cdots,w_k)\rho^{b}(\partial \beta).$$
In fact, we can generalize this approach, and consider a flat $\Lambda_0$-bundle $L \times \Lambda_0$
and $b \in H^1(\LL,\Lambda_0)/H^1(\LL,\Z)$ for $x_i \in \Lambda_0$. Note that holonomy along any loop lies in $\Lambda_0^*$.
This allows us to take $x_i$ in Novikov ring $\Lambda_0$.

We will may still write the resulting $\AI$-operation as $m_k^b = m_k^{b,\cdots,b}$ since $\rho$ depends on $b$,
and we read holonomy along the entire boundary of a holomorophic disc. We may also use the notation $m_k^{0,\cdots,0,b}$
when we read holonomy along the last segment of the boundary  of a holomorphic polygon for $m_k$. In this way, we can make the toric cases parallel to the immersed cases.

Now, to define a mirror functor, we want to make sure that the deformation $m_1^{0,b}$ can be written in terms of 
exponential variables $y_i = e^{x_i}$ only.  We need to choose a good connection one form.
If Lagrangian $K$ transversely intersect with $\bL$ at finitely many points, then we can choose hyper-tori away from these
intersection points. If we choose connection one form $\theta_i$'s to be supported in a small neighborhood of hypertori, away 
from intersection points, then we can see that holonomy contribution for any $J$-holomorphic polygon can be written 
in $y_i$-variables. This is the idea. 

In particular, $(- m_1^{0,b})^2$ counts $J$-holomorphic strips, with holonomy contribution along the upper-boundary of the strip,
and this holonomy can be easily computed by counting the intersection number between the boundary of the strip and the hyper-tori's.
As a result, we get a potential function in $R = \Lambda \ll y_1, y_1^{-1},\cdots, y_n,y_n^{-1} \gg$, and
matrix factorizations of $W \in R$.

\subsection{$\AI$-functor}
After setting up Maurer-Cartan space and potential function,  localized mirror functor is given canonically from
algebraic machinery, which is similar to Yoneda embedding.

To write the functor, we turn dg-category into an $\AI$-category (cf. \cite{Sh}).
\begin{definition}
The $\AI$-category $\AI(MF(W))$ is defined as follows. Its objects are $\Z/2$-graded finite dimensional matrix factorizations of $W$, denoted as $(E,Q)$ as in Definition \ref{def:mf}.
$$\Hom_{\AI(MF(W))}((E,Q_E), (F,Q_F)): =  Hom_{MF(W)}(F,E).$$
$m_1, m_2$ are defined as 
$$m_1 (\Phi) := \delta (\Phi) = Q_E \circ \Phi - (-1)^{|\Phi|}\Phi \circ Q_{F}$$
$$m_2(\Phi,\Psi) = (-1)^{|\Phi|} \Phi \circ \Psi$$
All other $m_k$'s are set to be zero. These define an $\AI$-category.
\end{definition}

\begin{theorem}[\cite{CHL} Theorem 2.18]
We have an $\AI$-functor $\mathcal{F}^\bL$ from Fukaya $\AI$-category to $\AI(MF(W))$.
On the level of objects, it sends a weakly unobstructed Lagrangian $L$ with Maurer-Cartan element $\xi$ to
the matrix factorization $(E,Q)$, which is given by 
$$E:=\Hom\big((L,\xi), (\bL,b)\big), Q:=-m_1^{\xi,b}$$
On the level of morphisms, 
 $$\mathcal{F}^\bL_k: \bigotimes_{i=1}^k \Hom\big( (L, \xi_{i-1}), (L,\xi_i) \big)  \to \Hom \big( (L, \xi_0), (L,\xi_k) \big)$$ is defined as
 \begin{equation}\label{lmfunctor}
\mathcal{F}^\bL_k (x_1,\cdots, x_k) (\bullet) = m_{k+1}^{\xi_0,\xi_1,\cdots,\xi_{k-1},b}(x_1,\cdots,x_k,\bullet)
 \end{equation}
 These define an $\AI$-functor.
\end{theorem}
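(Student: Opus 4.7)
The plan is to verify the two axioms separately: (a) each object is mapped to a genuine matrix factorization, and (b) the sequence $\{\mathcal{F}^\bL_k\}$ satisfies the $A_\infty$-functor relations. Both parts follow by substituting the specific inputs into the curved $A_\infty$-relations of $\mathcal{F}u(M)$ and rearranging terms using strict unitality together with the weak Maurer--Cartan equations $m(e^\xi)=W_L(\xi)\cdot\be_L$ and $m(e^b)=W_\bL(b)\cdot\be_\bL$.

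For (a), I would take the curved $A_\infty$-relation for the tensor $e^\xi\otimes \bullet \otimes e^b$ in $\mathrm{Hom}\big((L,\xi),(\bL,b)\big)$. Expanding the sum $\sum m(\ldots, m(\ldots),\ldots)=0$, the inner $m$ either acts on a block of $e^\xi$'s (contributing $W_L(\xi)\be_L$, which by unitality becomes $-W_L(\xi)\cdot\bullet$), on a block of $e^b$'s (contributing $+W_\bL(b)\cdot\bullet$), or on a substring containing $\bullet$. In the last case the outer and inner operation are both of the form $m_*^{\xi,b}(\ldots)$ with $\bullet$ in the appropriate middle slot, so the combined contribution equals $(m_1^{\xi,b})^2(\bullet)$. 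The resulting identity $(m_1^{\xi,b})^2(\bullet)=(W_\bL(b)-W_L(\xi))\cdot\bullet$ is exactly $Q^2=(W_\bL(b)-W_L(\xi))\cdot\mathrm{Id}$, so $(E,Q)$ is a matrix factorization of $W_\bL-W_L(\xi)$.

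For (b), the relevant $A_\infty$-functor equation, written for the target dg-category of matrix factorizations (so only $m_1^{MF}=\delta$ and $m_2^{MF}$ appear on the right), reads
\begin{equation*}
\sum_{i\le j}\pm\mathcal{F}^\bL_{k-j+i}\big(x_1,\ldots,m^{\xi_{\ast}}(x_{i+1},\ldots,x_j),\ldots,x_k\big)=\delta\big(\mathcal{F}^\bL_k(x_1,\ldots,x_k)\big)+\sum_{i}\pm\mathcal{F}^\bL_i(x_1,\ldots,x_i)\circ\mathcal{F}^\bL_{k-i}(x_{i+1},\ldots,x_k).
\end{equation*}
To prove it, I would apply the $A_\infty$-relation of $\mathcal{F}u(M)$ to the input
\begin{equation*}
e^{\xi_0}\otimes x_1\otimes e^{\xi_1}\otimes x_2\otimes \cdots \otimes x_k\otimes e^{\xi_k}\otimes \bullet \otimes e^{b},
\end{equation*}
and then classify the resulting terms according to where the inner $m$ sits: if it lies in a single $e^{\xi_i}$ or $e^b$ block it produces $W_L(\xi_i)$ or $W_\bL(b)$ times the outer $\mathcal{F}^\bL_k$, and these terms cancel against the $\delta$-definition's $Q\circ$ and $\circ Q$ contributions; if it consumes a substring $x_{i+1},\ldots,x_j$ (with flanking $e^{\xi_\ast}$'s), it produces the deformed $m^{\xi_\ast}(x_{i+1},\ldots,x_j)$ that feeds into the outer $\mathcal{F}^\bL$, yielding the left-hand side; if it consumes $\bullet$ together with some trailing $x_i$'s and $e^b$'s while the outer operation consumes the remaining $x_j$'s and an $e^b$ block, then the outer operation (after applying $m(e^b)=W_\bL(b)\be_\bL$ is excluded since $\bullet$ is already consumed) gives precisely the composition $\mathcal{F}^\bL_i\circ\mathcal{F}^\bL_{k-i}$.

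The main obstacle is the sign bookkeeping: one must match the Koszul signs inherited from the $A_\infty$-relation with $\bullet$ moving past various $b$'s and $\xi_i$'s against the $(-1)^{|\Phi|}$ in the $MF$ differential $\delta\Phi=Q_E\Phi-(-1)^{|\Phi|}\Phi Q_F$ and the $(-1)^{|\Phi|}$ in $m_2^{MF}(\Phi,\Psi)=(-1)^{|\Phi|}\Phi\circ\Psi$. With the standard shifted-degree conventions of \cite{FOOO} all the signs line up, essentially because the insertion $e^b$ appears to the right of $\bullet$ and has even total degree, so the rearrangement is governed only by the degrees of the $x_i$'s; I would refer to \cite{CHL} for the detailed sign verification rather than rederiving it here.
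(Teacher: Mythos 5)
Your proposal follows the same route as the paper: both verify the $\AI$-functor equations by applying the boundary-deformed $\AI$-relation of the Fukaya category to the string $e^{\xi_0}\otimes x_1\otimes\cdots\otimes x_k\otimes e^{\xi_k}\otimes\bullet\otimes e^b$ and sorting terms according to where the inner operation sits (the paper's proof is a terser version of your part (b), with part (a) handled separately in Section 3.2 by exactly your argument). One small inaccuracy: for $k\ge 1$ the curvature terms $W_L(\xi_i)\be$ and $W_\bL(b)\be$ do not cancel against the $Q\circ$ and $\circ Q$ pieces of $\delta$ (those arise instead from the extremes of the case where the inner $m$ consumes $\bullet$); they simply vanish by strict unitality, since the resulting unit lands in an operation with more than two inputs, and they only contribute at the object level, where they produce the constant in $Q^2=(W_\bL-W_L(\xi))\cdot\mathrm{Id}$.
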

\begin{remark}
In \cite{CHL}, we considered $\Hom\big((\bL,b), (L,\xi)\big)$ instead. In the current convention,
we do not have additional signs in \eqref{lmfunctor}.
\end{remark}
\begin{proof}
We omit the superscript $b$'s in the proof below as it is canonically determined from the inputs.
It is enough to show that 
$$ \sum_a (-1)^{|\bx^{(1)}_a|'} \mathcal{F}^\bL( \bx^{(1)}_a \otimes m(\bx^{(2)}_a) \otimes \bx^{(3)}_a)   =  \sum_c m_2( \mathcal{F}^\bL(\bx_c^{(1)}), \mathcal{F}^\bL(\bx_c^{(2)}))  
+ m_1 (\mathcal{F}^\bL(\bx)) $$
The first term can be written as
$$(-1)^{|\bx^{(1)}_a|'} m(\bx^{(1)}_a \otimes m(\bx^{(2)}_a) \otimes \bx^{(3)}_a,\bullet)$$
The second term  can be written as
$$(-1)^{|\bx^{(1)}_c |' +1 } \mathcal{F}^\bL(\bx_c^{(1)}) \circ  \mathcal{F}^\bL(\bx_c^{(2)})
= (-1)^{|\bx^{(1)}_c |' +1 } m(  \bx_c^{(1)}, m( \bx_c^{(2)}, \bullet))$$
The third term  can be written as
$$- m_1 m_{k+1}(\bx,\bullet) + (-1)^{|\bx|'+1} m_{k+1} (\bx,m_1(\bullet))$$
Hence, this proves the claim.
\end{proof}

\section{$B$-invariant}\label{sec:B}
In this section, we define $B$-invariant which assigns to a Floer cohomology element $\alpha \in HF(L,L)$ an element
of Jacobian ring $Jac(W_\bL)$. 

The motivation to define such an invariant is to have the following commuting diagram, which will be shown in the next section.
\begin{equation}\label{diagram0}   \xymatrix{
	HF(L,L) \;\;\;\;  \ar[d]^{\mathcal{F}^\bL}   \ar[drr]^-{B} &  &   \\
	MF(P_L,P_L)   \;\;\;\;     \ar[rr]_-{\textrm{boundary-bulk}} &  &   \;\;\;\;    Jac(W_\bL)dx_1\wedge \cdots \wedge dx_n. }
\end{equation}

Here is the setting. Let $M$ be a symplectic manifold with its Fukaya category $\mathcal{F}u(M)$.
We assume that $\mathcal{F}u(M)$ is a unital filtered $\AI$-category. (If it is only homotopy unital, one can take
quasi-isomorphic $\AI$-category which is unital and apply the construction.)
Let $\bL$ be a fixed reference Lagrangian submanifold with a weak Maurer-Cartan space $Y$ and a potential function $W_\bL: Y \to \Lambda$. Let $L$ be any Lagrangian in $\mathcal{F}u(M)$.
As explained in the last section, for immersed cases we also allow $L = \bL$, but
for toric cases, we require $L \neq \bL$ (but we can also take $L =\phi_H(\bL)$ as long as
$L \pitchfork \bL$).

We further assume the following condition, which holds in all of our examples.
\begin{definition}\label{defn:fv}
We assume that $HF(\bL,\bL)$ is generated by degree one elements $X_1,\cdots,X_n$ 
and that the total rank of the cohomology is $2^n$.
In this case, the (degree $n$ part of ) quantum product of all generators, will be called a {\em Floer volume class } or Floer point class. Namely, we define
\[ [X_n,\cdots,X_1]:=m_2^{b}\big(X_n,m_2^{b}(X_{n-1},\cdots m_2^{b}(X_3,m_2^{b}(X_2,X_1))\cdots)\big). \]
and its degree $n$-part with sign correction is denoted as
\[ vol^{Floer}:=  (-1)^{n+1}[X_n,\cdots,X_1]^{(n)}\]
\end{definition}
For the sign, we consider the toric case. Note that  the cup product convention of \cite{FOOO} is
 \[m_{2,0}(a,b)=(-1)^{|a|(|b|+1)}a\cup b.\] Denote by $m_{2,0}$ the classical cup product part of $m_2$, then we have
\[ m_{2,0}(X_n,m_{2,0}(X_{n-1},\cdots,m_{2,0}(X_2,X_1))\cdots)=(-1)^{\frac{n(n+1)}{2}+1}X_n \cup \cdots \cup X_1=(-1)^{n+1}vol_L,\] 
where $vol_L = X_1\cup \cdots \cup X_n$. Therefore, it is natural to correct it by the sign $(-1)^{n+1}$.
\begin{lemma}\label{lem:fg} 
We have the following
\begin{enumerate}
\item
Floer cohomology $HF((\bL,b),(\bL,b))$ is non-trivial if and only if $\frac{\partial W}{\partial x_i} = 0, \forall i$.
\item If $HF(\bL,\bL)$ is generated by $X_1,\cdots,X_n$, so is $HF((\bL,b),(\bL,b))$.
\item $m_1^b([X_n,\cdots,X_1])$ lies in the Jacobian ideal (generated by $\frac{\partial W}{\partial x_i}$).
\end{enumerate}
\end{lemma}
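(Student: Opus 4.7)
My plan rests on two structural identities that I derive first and then use to dispatch all three claims. The first is a \emph{fundamental derivative identity}: differentiating the weak Maurer--Cartan equation $m(e^b)=W(b)\be_\bL$ with respect to $x_i$, and noting that $\partial_i(e^b)$ expands into a sum, over all positions, of $X_i$-insertions between copies of $b$, the right-hand side rearranges to $m(e^b,X_i,e^b)=m_1^b(X_i)$, giving
\[ m_1^b(X_i) = \frac{\partial W}{\partial x_i}\cdot \be_\bL. \]
(In the toric setup the same argument runs with $y_i\partial_{y_i}$ applied to the holonomy description of $b$ from Section~\ref{sec:toriclm}.) The second is the \emph{graded Leibniz rule} for $m_1^b$ with respect to $m_2^b$, which I derive from the deformed $\AI$-relation on two inputs $(a,c)$: besides $m_1^b\circ m_2^b$ and the two one-sided insertions $m_2^b(m_1^b(a),c)$ and $m_2^b(a,m_1^b(c))$, the only remaining terms involve $m_3^b$ with $m_0^b=W\be_\bL$ placed in one of its three slots, and these vanish because strict unitality forces $m_k^b(\dots,\be_\bL,\dots)=0$ for $k\ge 3$.

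With these in hand, part (3) follows by induction on the length of $[X_n,\dots,X_1]$: Leibniz spreads $m_1^b$ across the product, each occurrence of $m_1^b(X_i)=\partial_iW\cdot\be_\bL$ is absorbed by strict unitality of $\be_\bL$ under $m_2^b$, and the result assembles into
\[ m_1^b([X_n,\dots,X_1]) = \sum_{i=1}^n \pm \frac{\partial W}{\partial x_i}\,[X_n,\dots,\widehat{X_i},\dots,X_1], \]
which lies in the Jacobian ideal. The ``only if'' direction of part (1) is likewise immediate: if some $\partial_iW(b)\ne 0$, the fundamental identity exhibits $\be_\bL$ as $m_1^b$-exact, and writing any cocycle as $\alpha=m_2^b(\be_\bL,\alpha)$ together with Leibniz turns $\alpha$ into a coboundary, forcing $HF((\bL,b),(\bL,b))=0$.

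For the converse in part (1) and for part (2), suppose $b$ is a critical point of $W$. The fundamental identity gives $m_1^b(X_i)=0$, and repeated Leibniz then makes every monomial $[X_{i_k},\dots,X_{i_1}]$ closed. Under the standing hypothesis that the undeformed $HF(\bL,\bL)$ realizes its maximal rank $2^n$ generated by the $X_i$'s, I will invoke the rigidity argument of \cite{FOOOtoric3} in the toric case (and the parallel explicit calculation for $\PP^1_{3,3,3}$) to conclude that these $2^n$ monomials remain linearly independent in cohomology after the $b$-deformation; this simultaneously yields the non-triviality in part (1) and the generating property in part (2). The main obstacle is precisely this non-collapse step: the algebraic manipulations above produce $2^n$ closed representatives but do not by themselves preclude cohomological cancellations, so I will need to fall back on the hypothesis on $HF(\bL,\bL)$ together with the explicit local models available in our examples to secure independence.
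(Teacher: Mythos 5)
Your computational core---the identity $m_1^b(X_i)=\frac{\partial W}{\partial x_i}\cdot\be$ obtained by differentiating $m(e^b)=W(b)\be$, the derivation of the graded Leibniz rule from the curved $\AI$-relation (with the $m_3^b(\dots,m_0^b(1),\dots)$ terms killed by unitality), the induction giving part (3), and the exactness of $\be$ when some $\partial_{x_i}W\neq 0$---coincides with what the paper does. One caveat on the toric case: there the derivative identity is not a purely formal $y_i\partial_{y_i}$ manipulation of the holonomy description; it rests on the divisor-type statement of Lemma~\ref{lem:cpmk} (Lemma 11.8 of \cite{FOOOtoric1}), which is what guarantees that $m_{k,\beta}$ applied to degree-one classes returns a multiple of the unit, so you should invoke that lemma explicitly.

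Where you defer to an external ``rigidity argument'' for (2), the paper closes the gap with a short self-contained mechanism that you are missing: on the canonical model the boundary deformation does not change the energy-zero part of the product, i.e.\ $m_{2,0}^b=m_{2,0}$, while every $m_{2,\beta}$ with $\beta\neq 0$ has strictly positive valuation. Since the $\AI$-algebra is gapped filtered and, by the standing hypothesis of Definition~\ref{defn:fv}, the classical cohomology is generated by the $X_i$ under $m_{2,0}$, a Nakayama-type successive-approximation argument over $\Lambda_0$ yields generation of $HF((\bL,b),(\bL,b))$ by the $X_i$ under $m_2^b$. Note also that (2) asserts only \emph{generation}, not that the $2^n$ monomials stay linearly independent, so the ``non-collapse'' step you worry about is not actually required for (2). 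It \emph{is} required for the converse direction of (1) (non-triviality at a critical point), and there your caution is justified: positive valuation of $m_1^b$ does not by itself prevent the unit from becoming exact over the field $\Lambda$, and the paper's own proof of (1) only spells out the direction you handled, so importing the non-vanishing statement from \cite{FOOOtoric1}, \cite{FOOOtoric3} (or the explicit computation for $\PP^1_{3,3,3}$) is consistent with what the authors themselves do.
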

We will give a proof of this lemma at the end of this section.

Let us define $B$-invariant of any Lagrangian $L$ using the Floer complex $CF(L,\bL)$.
Note that the Floer complex $CF(L,\bL)$ is a $\Z/2$-graded $\Lambda$-module generated by the intersection points
$L \cap \bL$. 
\begin{definition}
Let $\alpha$ be a Floer cycle in $CF(L,L)$.
 We define the {\em B-invariant} of $\alpha$, $B(\alpha)$ with values in  $Jac(W_\bL)$ as follows. 
\begin{equation}\label{eq:Bdef}
B(\alpha) := (-1)^{\frac{n(n+1)}{2}+1}  \str \big( \bullet \mapsto  (-1)^{(n-1)|\bullet|} m_2^{0,0,b}(\alpha,m_2^{0,b,b}(\bullet,[X_n,\cdots,X_1])) \big) dx_1 \wedge \cdots \wedge dx_n
\end{equation}
where $\bullet$ runs over the intersection points
in $L \cap \bL$.
\end{definition}
\begin{remark}
We do not assume that $\AI$-algebra is cyclic symmetric.
\end{remark}

From Lemma \ref{lem:stp} (1), $B(\alpha)$ vanishes if degree of $\alpha$ is not equal to $n$ mod $2$.
\begin{remark}
The above construction extends to the case that $L$ is equipped with a Maurer-Cartan element $\xi$,
and $\alpha \in HF((L,\xi),(L,\xi))$. We can define
\begin{equation}\label{eq:Bdefo}
B (\alpha) := (-1)^{\frac{n(n+1)}{2}+1}  \str \big( \bullet \mapsto  (-1)^{(n-1)|\bullet|} m_2^{\xi,\xi,b}(\alpha,m_2^{\xi,b,b}(\bullet,[X_n,\cdots,X_1])) \big)dx_1 \wedge \cdots \wedge dx_n
\end{equation}
As such extension is obvious, we assume that $\xi=0$ to simplify the exposition.
\end{remark}

\begin{theorem}\label{thm:bw}
The map $B : HF(L,L) \to Jac(W_\bL)dx_1\cdots dx_n$ is well-defined. i.e. it is independent of the choice of representative in
its cohomology class.
\end{theorem}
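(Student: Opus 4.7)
The plan is to show that if $\alpha = m_1^0(\alpha')$ is a coboundary in $CF(L,L)$, then the supertrace defining $B(\alpha)$ in \eqref{eq:Bdef} lies in the Jacobian ideal of $W_\bL$, so that $B(\alpha) = 0$ in $Jac(W_\bL)dx_1\wedge\cdots\wedge dx_n$. Introduce the operator $T_\alpha : CF(L,\bL) \to CF(L,\bL)$ by $T_\alpha(\bullet) := m_2^{0,0,b}\bigl(\alpha, m_2^{0,b,b}(\bullet,[X_n,\ldots,X_1])\bigr)$, so that $B(\alpha)$ is, up to the prefactor, the supertrace of $T_\alpha$ weighted by $(-1)^{(n-1)|\bullet|}$. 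We work with the matrix factorization $(CF(L,\bL),Q)=\mathcal{F}^\bL(L)$ with $Q=-m_1^{0,b}$ and use the key property, recorded in Appendix B and Lemma \ref{lem:stp}, that the weighted supertrace kills graded commutators $[Q,A]_s := QA-(-1)^{|A|}AQ$. The idea is to apply the $A_\infty$-relations twice to push the $m_1^0$ coming from $\alpha = m_1^0\alpha'$ inward, reducing $T_{m_1^0\alpha'}$ modulo a graded commutator to an operator whose innermost factor is $m_1^b([X_n,\ldots,X_1])$, which lies in the Jacobian ideal by Lemma \ref{lem:fg}(3).

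Concretely, the first $A_\infty$-relation applied to the pair $(\alpha',\beta(\bullet))$ with $\beta(\bullet):=m_2^{0,b,b}(\bullet,[X_n,\ldots,X_1])$ yields
\[ m_2^{0,0,b}(m_1^0\alpha',\beta(\bullet)) = \pm\, m_1^{0,b}\bigl(m_2^{0,0,b}(\alpha',\beta(\bullet))\bigr) \pm\, m_2^{0,0,b}\bigl(\alpha',m_1^{0,b}\beta(\bullet)\bigr). \]
Applying the $A_\infty$-relation a second time to $m_1^{0,b}\beta(\bullet)$ gives
\[ m_1^{0,b}\beta(\bullet) = \pm\, m_2^{0,b,b}(m_1^{0,b}\bullet,[X_n,\ldots,X_1]) \pm\, m_2^{0,b,b}\bigl(\bullet,m_1^b[X_n,\ldots,X_1]\bigr). \]
Writing $U(\bullet) := m_2^{0,0,b}(\alpha',\beta(\bullet))$ and substituting, one obtains
\[ T_{m_1^0\alpha'}(\bullet) = \pm\, m_1^{0,b}\circ U(\bullet) \pm\, U\circ m_1^{0,b}(\bullet) \pm\, m_2^{0,0,b}\bigl(\alpha',m_2^{0,b,b}(\bullet,m_1^b[X_n,\ldots,X_1])\bigr). \]

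With the right $A_\infty$-signs and the weighting $(-1)^{(n-1)|\bullet|}$ in the definition of $B$, the first two terms combine into the graded commutator $[Q,U]_s$, so their contribution to $B(m_1^0\alpha')$ vanishes by cyclicity of $\str$. For the remaining error term, Lemma \ref{lem:fg}(3) gives $m_1^b[X_n,\ldots,X_1] = \sum_i (\partial_{x_i} W_\bL)\,Z_i$ for some $Z_i\in CF(\bL,\bL)$; since the $A_\infty$-operations $m_2^{0,b,b}$ and $m_2^{0,0,b}$ are $R$-linear in the coefficients produced by the weak Maurer--Cartan insertions, the supertrace of the error factors as $\sum_i (\partial_{x_i}W_\bL)\cdot\str(\bullet\mapsto\pm m_2^{0,0,b}(\alpha',m_2^{0,b,b}(\bullet,Z_i)))$, which lies in the Jacobian ideal. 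Hence $B(m_1^0\alpha')\in(\partial_{x_1}W_\bL,\ldots,\partial_{x_n}W_\bL)\cdot dx_1\wedge\cdots\wedge dx_n$, and $B$ descends to $HF(L,L)$.

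The main obstacle is the sign bookkeeping: one must check that the signs coming from the two $A_\infty$-relations, together with the specific weighting $(-1)^{(n-1)|\bullet|}$ built into \eqref{eq:Bdef} and the overall sign $(-1)^{\frac{n(n+1)}{2}+1}$, combine to produce exactly the graded commutator $[Q,U]_s$ rather than an imbalanced combination that would leave a residual piece outside the Jacobian ideal. This is presumably why the particular normalizations in \eqref{eq:Bdef} are chosen, and the verification is a direct degree-parity computation using the sign conventions of Appendix B. Once the signs are confirmed, the rest is mechanical from the $A_\infty$-relations, Lemma \ref{lem:fg}(3), and the commutator-vanishing of $\str$.
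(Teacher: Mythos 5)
Your proposal follows essentially the same route as the paper's proof: two applications of the $A_\infty$-relations to push the $m_1$ from the coboundary inward, identification of the first two resulting terms as a graded commutator $[Q,U]_s$ (the paper's operator $A$ is your $U$ up to the sign weighting) killed by Lemma \ref{lem:stp}(3), and disposal of the remaining term via Lemma \ref{lem:fg}(3). The only difference is that the paper carries out the sign bookkeeping explicitly (using the degree reduction $|\gamma|=n-1$, so that the commutator operator has odd degree), whereas you defer it; the structure of the argument is otherwise identical.
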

\begin{proof}
In the proof, we will omit the notation $b$ as it can be determined from the inputs.  We must show that 
\begin{equation}\label{inv1}  \str (-1)^{(n-1)|\bullet|} m_2(m_1(\gamma),m_2(\bullet,[X_n,\cdots,X_1]))=0.
\end{equation}
We assume $|m_1(\gamma)|=|[X_n,\cdots,X_1]|$, otherwise (\ref{inv1}) is  zero by degree reason. Then $|\gamma|=n-1$. Applying $\AI$-relations repeteadly, we have

\begin{align*}
(\ref{inv1}) =& \str (-1)^{(n-1)|\bullet|+1}m_1 m_2(\gamma,m_2(\bullet,[X_n,\cdots,X_1])) \\
&+\str(-1)^{(n-1)|\bullet|+|\gamma|'+1}m_2(\gamma,m_1 m_2(\bullet,[X_n,\cdots,X_1])) \\
=& \str(-1)^{(n-1)|\bullet|+1} m_1 m_2(\gamma,m_2(\bullet,[X_n,\cdots,X_1])) \\
 &+\str (-1)^{(n-1)|\bullet|+n} m_2 (\gamma,m_2(m_1(\bullet),[X_n,\cdots,X_1])) \\
&+ \str(-1)^{(n-1)|\bullet|+n+|\bullet|'}m_2(\gamma,m_2(\bullet,m_1([X_n,\cdots,X_1])).
\end{align*}
Let $A(\bullet):=(-1)^{(n-1)|\bullet|+1}m_2(\gamma,m_2(\bullet,[X_n,\cdots,X_1])).$ Then $|A|=1$, and
\[ (\ref{inv1})=\str (m_1\circ A + A \circ m_1)+\str(-1)^{(n-1)|\bullet|+n+|\bullet|'}m_2(\gamma,m_2(\bullet,m_1([X_n,\cdots,X_1])),\]
Note that the first term vanishes from Lemma \ref{lem:stp} (3), and the second term lies in the Jacobian ideal
from Lemma \ref{lem:fg} (3). Thus \eqref{inv1} vanishes in Jacobian ring, which proves the claim.
\end{proof}

\begin{definition}\label{defn:qi}
We will say two Lagrangians $L$ and $L'$ are isomorphic if
there exist $p \in CF^0(L,L'), q \in CF^0(L',L)$ such that
$$m_2(p,q) = \be_L + m_1(\gamma_1), m_2(q,p) = \be_{L'} + m_1(\gamma_2)$$
\end{definition}
Note that we have an induced isomorphism
\begin{equation}
\Phi_{q,p}: HF(L,L)  \to HF(L',L'): \alpha \mapsto (-1)^{|\alpha|} m_2(q,m_2(\alpha,p))
\end{equation}
\begin{remark}
The additional sign is natural: indeed, when $p=q=\be_L$ where $\be_L\in CF(L,L)$ is the $\AI$-unit, we have the following identity: 
\[ m_2\big(m_2(\be_L,\alpha),\be_L \big)=m_2(\alpha,\be_L)=(-1)^{|\alpha|}\alpha.\]
\end{remark}

\begin{prop}\label{prop:cm}
We have the following properties
\begin{enumerate}
\item $B(\alpha)$ is invariant under the permutation $\sigma$ of $\{1,\cdots,n\}$.
\item Let $p:L \to L'$ and $q: L' \to L$ be isomorphic via $p,q$ as in Definition \ref{defn:qi}. Then
\[B_L(\alpha)=B_{L'}\big( \Phi_{q,p}(\alpha) \big).\]
\end{enumerate}
\end{prop}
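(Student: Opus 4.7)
My plan is to prove both parts via cohomology-level manipulations in the $b$-deformed $\AI$-algebra, using $\AI$-associativity and graded cyclicity of the supertrace to rewrite the defining expressions so that every correction term is either $m_1^b$-exact (and thus annihilated by $\str$ via Lemma~\ref{lem:stp}(3)) or lies in the Jacobian ideal (and thus annihilated in $Jac(W_\LL)$ via Lemma~\ref{lem:fg}(3)), in parallel with the well-definedness argument of Theorem~\ref{thm:bw}.

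For (1), the key identity to establish is
\[
[X_{\sigma(n)}, \ldots, X_{\sigma(1)}] \;\equiv\; \mathrm{sgn}(\sigma)\, [X_n, \ldots, X_1]
\quad \text{modulo } \mathrm{im}(m_1^b) + \text{Jacobian ideal}.
\]
I would reduce $\sigma$ to a sequence of adjacent transpositions and apply graded commutativity of $m_2^b$ on Floer cohomology to the odd-degree generators $X_i$; reassociating so that two adjacent entries of the iterated bracket appear as the top-level $m_2^b$-inputs costs only $m_3^b$-type boundary corrections from the $\AI$-relations. Since the top form $dx_1 \wedge \cdots \wedge dx_n$ also transforms by $\mathrm{sgn}(\sigma)$ under the simultaneous relabeling $X_i \leftrightarrow X_{\sigma(i)}$, $x_i \leftrightarrow x_{\sigma(i)}$, the two signs cancel, and the correction terms are eliminated exactly as in Theorem~\ref{thm:bw}.

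For (2), I substitute $\Phi_{q,p}(\alpha) = (-1)^{|\alpha|}\, m_2^{b}(q, m_2^{b}(\alpha, p))$ into the definition of $B_{L'}$ and use iterated $\AI$-associativity (mod $m_1^b$-image) to rewrite the endomorphism as
\[
\bullet' \;\longmapsto\; \pm\, m_2^{b}\big(q,\, m_2^{b}(\alpha,\, m_2^{b}(p,\, m_2^{b}(\bullet', [X_n, \ldots, X_1])))\big),
\]
i.e.\ as the composition $Q \circ \Psi \circ P \circ M_Y$ of the operators $Q(\bullet) = m_2^b(q, \bullet)$ from $CF(L, \LL)$ to $CF(L', \LL)$, $\Psi(\bullet) = m_2^b(\alpha, \bullet)$ on $CF(L, \LL)$, $P(\bullet') = m_2^b(p, \bullet')$ from $CF(L', \LL)$ to $CF(L, \LL)$, and $M_Y(\bullet') = m_2^b(\bullet', [X_n, \ldots, X_1])$. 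Applying the graded cyclicity of supertrace for operators between two distinct modules,
\[
\str_{L' \cap \LL}(Q \circ \Psi \circ P \circ M_Y) \;=\; \pm\, \str_{L \cap \LL}(\Psi \circ P \circ M_Y \circ Q),
\]
then reassociating $P \circ M_Y \circ Q$ via $m_2^{b}(p, m_2^{b}(m_2^{b}(q, \bullet), [X_n, \ldots, X_1])) \equiv \pm\, m_2^{b}(m_2^{b}(p, q), m_2^{b}(\bullet, [X_n, \ldots, X_1]))$, and substituting $m_2(p, q) = \be_L + m_1(\gamma_1)$ together with the unit axiom, this collapses to $\Psi \circ M_Y$, yielding the defining integrand of $B_L(\alpha)$. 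The main obstacle will be sign bookkeeping: the Koszul signs from $\AI$-associativity, the graded sign in the cyclicity of $\str$ across distinct modules, and the fixed prefactors $(-1)^{n(n+1)/2 + 1}$, $(-1)^{(n-1)|\bullet|}$ and $(-1)^{|\alpha|}$ must combine precisely to yield the claimed equality; an analogous consistency check in (1) must confirm that graded commutativity assembles into $\mathrm{sgn}(\sigma)$ through the iterated bracket.
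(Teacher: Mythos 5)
Your part (2) is essentially the paper's own argument: substitute $\Phi_{q,p}(\alpha)$, reassociate with $\AI$-relations (absorbing the corrections into $\str(m_1\circ A + A\circ m_1)=0$ and into the Jacobian ideal via Lemma \ref{lem:fg}(3)), cyclically rotate the operator $m_2(p,\cdot)$ using the graded symmetry of the supertrace from Lemma \ref{lem:stp}(3), and collapse $m_2(p,q)=\be_L+m_1(\gamma_1)$ with the unit axiom. That half of the plan is sound, modulo the sign bookkeeping you already flag.

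Part (1), however, has a genuine gap. The key congruence you propose,
$[X_{\sigma(n)},\dots,X_{\sigma(1)}]\equiv\mathrm{sgn}(\sigma)\,[X_n,\dots,X_1]$
modulo $\mathrm{im}(m_1^b)$ plus the Jacobian ideal, is false in general: for the odd generators the graded anticommutator is
\begin{equation*}
m_2^b(X_i,X_j)+m_2^b(X_j,X_i)=\frac{\partial^2 W}{\partial x_i\,\partial x_j}\cdot\be ,
\end{equation*}
cf.\ \eqref{eq:dd1}, i.e.\ a multiple of the unit by a \emph{second} derivative of $W$, which is neither $m_1^b$-exact nor in the Jacobian ideal (the latter is generated by the first derivatives). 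This is exactly why the paper remarks, immediately after the proposition, that $[X_n,\dots,X_1]$ itself is \emph{not} permutation invariant: in the toric case the $X_i$ generate a Clifford algebra under $m_2^b$. Transposing two adjacent entries therefore leaves a correction term of the form $\partial_i\partial_jW\cdot\str\big(\bullet\mapsto\pm\, m_2(\alpha,m_2(\bullet,[\,\text{bracket of }n-2\text{ generators}\,]))\big)$; this operator has even total degree (when $|\alpha|\equiv n$), so it is not killed by the parity statement of Lemma \ref{lem:stp}(1), and its vanishing modulo the Jacobian ideal is precisely the nontrivial content of Polishchuk--Vaintrob's result (Theorem \ref{perminv}, concentration of the boundary-bulk image in the top piece). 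The paper's proof of (1) is correspondingly indirect: it first identifies $B(\alpha)$ with $\pm\frac{1}{n!}\str\big(\mathcal{F}^\bL(\alpha)\circ(dQ)^{\wedge n}\big)$ via Theorem \ref{mainthm}, an expression that is manifestly permutation invariant, and then quotes Theorem \ref{perminv}. To repair your argument you would have to either invoke that external input or independently prove the vanishing of the lower supertraces; it does not follow from Lemmas \ref{lem:stp} and \ref{lem:fg} alone.
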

\begin{remark}
\begin{itemize}
 \item $[X_n,\cdots,X_1]$ itself is not invariant under permutation of $\{1,\cdots,n\}$. For example,
in toric cases, the generators form a Clifford algebra under $m_2^b$. \cite{Cho}.
\item We expect that  $B$-invariant only depends on top degree component of $[X_n,\cdots,X_1]$.
This holds for our examples ( see  Corollary \ref{co:degb} for toric cases and Lemma \ref{lem:degb} for $\mathbb{P}^1_{333}$ case.)
In the general case, one should be able to argue as in toric cases, that $B$-invariant can be related to the
composition of open-closed map and closed-open map, and if the output of closed-open map under bulk insertion is a multiple of unit,
then its pairing with $[X_n,\cdots,X_1]$ certainly depends on the top degree part. 
This seem to be the geometric reason behind the claim (1) in the proposition.
\end{itemize}
\end{remark}
\begin{proof}
\begin{enumerate}
\item  For the invariance $B(\alpha)$ under permutation, we use a non-trivial algebraic fact from \cite{PV10}.
Namely, the claim is a corollary of Theorem \ref{perminv} and \ref{mainthm}.
\item By $\AI$-relation, 
\begin{align}
&\str (-1)^{(n-1)|\bullet|}m_2\Big((-1)^{|\alpha|}m_2(m_2(q,\alpha),p),m_2(\bullet,[X_n,\cdots,X_1])\Big) \nonumber \\
=& \str (-1)^{(n-1)|\bullet|+|\alpha|+|m_2(q,\alpha)|'+1}m_2\Big(m_2(q,\alpha),m_2(p,m_2(\bullet,[X_n,\cdots,X_1]))\Big) \label{inv2:eq1} \\
&+ \str (-1)^{(n-1)|\bullet|+|\alpha|+1}m_1\Big(m_3\big(m_2(q,\alpha),p,m_2(\bullet,[X_n,\cdots,X_1])\big)\Big) \nonumber\\
&+ \str (-1)^{(n-1)|\bullet|+|\alpha|+|m_2(q,\alpha)|'+|p|'} m_3\big(m_2(q,\alpha),p,m_2(m_1(\bullet),[X_n,\cdots,X_1]))\big).\nonumber 
%&+ \str \Big(\pm m_3\big(m_2(q,\alpha),p,m_2(\bullet,m_1([X_n,\cdots,X_1]))\big)\Big)\nonumber.
\end{align}
Let $A(\bullet):=(-1)^{(n-1)|\bullet|+|\alpha|+1} m_3(m_2(q,\alpha),p,m_2(\bullet,[X_n,\cdots,X_1])).$ Then the second and third terms give $\str (m_1\circ A + A\circ m_1)=0.$ Applying $\AI$-relations and graded symmetry of supertraces repeatedly as above, we ultimately have
\begin{align*}
(\ref{inv2:eq1})=&\str(-1)^{(n-1)|\bullet|+|\alpha|+n}m_2\Big(q,m_2\big(\alpha,m_2(m_2(p,\bullet),[X_n,\cdots,X_1])\big)\Big) \\
=& \str(-1)^{(n-1)|\bullet|}m_2\big(q,m_2\big(\alpha,m_2(\bullet,[X_n,\cdots,X_1])\big)\big) \circ m_2(p,\bullet) \\
=& \str(-1)^{(n-1)|\bullet|}m_2(p,\bullet)\circ m_2\big(q,m_2\big(\alpha,m_2(\bullet,[X_n,\cdots,X_1])\big)\big)\\
=& \str(-1)^{(n-1)|\bullet|}m_2\Big(p,m_2\big(q,m_2(\alpha,m_2(\bullet,[X_n,\cdots,X_1]))\big)\Big)\\
=& \str(-1)^{(n-1)|\bullet|+|p|'+1}m_2\big(m_2(p,q),m_2(\alpha,m_2(\bullet,[X_n,\cdots,X_1]))\big)\\
&+ \str(-1)^{(n-1)|\bullet|+|p|'+1}m_1 m_3\big(p,q,m_2(\alpha,m_2(\bullet,[X_n,\cdots,X_1]))\big) \\
&+ \str(-1)^{(n-1)|\bullet|+|q|'+1}m_3\big(p,q,m_1 m_2(\alpha,m_2(\bullet,[X_n,\cdots,X_1]))\big).
\end{align*}
Last two terms vanish again due to the graded symmetry, so 
\begin{align*} 
(\ref{inv2:eq1})=&\str(-1)^{(n-1)|\bullet|+|p|'+1}m_2(m_2(p,q),m_2(\alpha,m_2(\bullet,[X_n,\cdots,X_1])))\\
=&\str(-1)^{(n-1)|\bullet|}m_2({\be}_L,m_2(\alpha,m_2(\bullet,[X_n,\cdots,X_1]))) \\ 
&+ \str(-1)^{(n-1)|\bullet|}m_2(m_1(\gamma_1),m_2(\alpha,m_2(\bullet,[X_n,\cdots,X_1])))\\
=&\str(-1)^{(n-1)|\bullet|}m_2(\alpha,m_2(\bullet,[X_n,\cdots,X_1])).
\end{align*}
\end{enumerate}
For the last equlity, we can handle the term $m_2(m_1(\gamma_1),m_2(\alpha,m_2(\bullet,[X_n,\cdots,X_1])))$ as in the proof of Theorem \ref{thm:bw} and we omit it.
\end{proof}

Now, we give a proof of Lemma \ref{lem:fg}
\begin{proof}
Let us prove (1).
For the immersed case, take a derivative of $m(e^b) = W(b) \cdot \be$. As $b = \sum x_i X_i$,  we have
$$\frac{\partial W}{\partial x_i}  \cdot \be = \frac{\partial m(e^b)}{\partial x_i} = m_1^b(X_i).$$
This proves the first claim. 
For the toric cases, recall the following Lemma 11.8 from \cite{FOOOtoric1}
\begin{lemma}\label{lem:cpmk}
Let $\gamma$ be a harmonic one form on a Lagrangian torus fiber $L$.
$$m_{k,\beta}(\gamma,\cdots,\gamma) = \frac{c_\beta}{k!}(\partial \beta \cap \gamma)^k \be$$
\end{lemma}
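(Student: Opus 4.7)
The plan is to reduce the computation of $m_{k,\beta}(\gamma,\ldots,\gamma)$ to a combinatorial integral by exploiting the $T^n$-symmetry of the moduli spaces together with the $T^n$-invariance of harmonic one-forms on the torus $L\cong T^n$. The underlying geometric statement is: the only way a disc of class $\beta$ ``sees'' the harmonic form $\gamma$ is through the topological intersection $\partial\beta\cap\gamma$.

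First I set up the moduli. Let $\mathcal{M}_{k+1}(\beta)$ denote the moduli space of $J$-holomorphic discs of class $\beta$ with $k+1$ cyclically ordered boundary marked points $z_0,z_1,\ldots,z_k$, with $z_0$ designated as the output. Since $M$ is toric and $L$ is a torus fiber, the $T^n$-action preserves $L$ and lifts to $\mathcal{M}_{k+1}(\beta)$, with all evaluation maps $ev_i:\mathcal{M}_{k+1}(\beta)\to L$ being $T^n$-equivariant. Because $\gamma$ is harmonic on $T^n$ it is translation-invariant, so $u^*\gamma$ along $\partial D^2$ is controlled by the topology of $u|_{\partial D^2}$ alone. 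Concretely, since the boundary loop represents $\partial\beta\in H_1(L,\mathbb{Z})$, one has $\int_{\partial D^2}u^*\gamma=\partial\beta\cap\gamma$. Letting $\sigma_i$ be the counterclockwise arc from $z_0$ to $z_i$, the numbers $\lambda_i(u):=\int_{\sigma_i}u^*\gamma$ satisfy $0=\lambda_0<\lambda_1<\cdots<\lambda_k<\partial\beta\cap\gamma$.

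Next I compute via a forgetful reduction. Pair the output against a test cohomology class $\eta\in H^*(L)$ to obtain
\[
\int_{\mathcal{M}_{k+1}(\beta)}ev_0^*\eta\wedge\bigwedge_{i=1}^{k} ev_i^*\gamma.
\]
The forgetful map $\pi:\mathcal{M}_{k+1}(\beta)\to\mathcal{M}_1(\beta)$ that remembers only $z_0$ has fibers parametrizing the positions of $z_1,\ldots,z_k$ on $\partial D^2\setminus\{z_0\}$ in cyclic order. Under the change of variable $t\mapsto\lambda(t)$ from the boundary arc length to the integral of $u^*\gamma$ along that arc, the fiber becomes the open simplex $\{0<\lambda_1<\cdots<\lambda_k<\partial\beta\cap\gamma\}$, and the fiberwise integral of $d\lambda_1\wedge\cdots\wedge d\lambda_k$ equals $(\partial\beta\cap\gamma)^k/k!$. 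The residual integral over $\mathcal{M}_1(\beta)$ is
\[
\int_{\mathcal{M}_1(\beta)}ev_0^*\eta \;=\; c_\beta\int_L \eta,
\]
by the defining property of the Maslov index two disc count $c_\beta$. Combining the two yields $\frac{c_\beta}{k!}(\partial\beta\cap\gamma)^k\int_L\eta$, and since this holds for every test $\eta$, the output in $H^*(L)$ is $\frac{c_\beta}{k!}(\partial\beta\cap\gamma)^k\,\be$.

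The main obstacle is performing this reduction rigorously in the virtual setting: the forgetful map $\pi$ and the change of parameter $t\mapsto\lambda$ must be compatible with the Kuranishi structures, and the multisections used for transversality must remain sufficiently $T^n$-equivariant so that harmonicity of $\gamma$ continues to give the translation-invariance used above. The degenerate case $\partial\beta\cap\gamma=0$ must be handled separately (by a continuity or by directly showing the integral vanishes). Both points are exactly the content of the proof of Lemma 11.8 in \cite{FOOOtoric1}, whose $T^n$-equivariant Kuranishi package I would invoke rather than reconstruct.
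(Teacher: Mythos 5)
The paper does not actually prove this lemma: it is quoted verbatim as Lemma 11.8 of \cite{FOOOtoric1} inside the proof of Lemma \ref{lem:fg}, with no argument supplied. So the only meaningful comparison is against the proof in that reference, and your sketch is essentially that proof: $T^n$-equivariance plus translation-invariance of the harmonic form, the forgetful fibration $\mathcal{M}_{k+1}(\beta)\to\mathcal{M}_1(\beta)$ whose fibers are ordered configurations of $k$ boundary marked points, a fiberwise integral producing $(\partial\beta\cap\gamma)^k/k!$, and the base integral producing $c_\beta\,\be$, with the Kuranishi/multisection compatibilities deferred to \cite{FOOOtoric1} --- which is exactly what the paper itself does by citing the result.

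One intermediate step of yours is false as written, though the flaw is repairable and does not affect the conclusion. You assert that $\lambda_i(u)=\int_{\sigma_i}u^*\gamma$ satisfies $0=\lambda_0<\lambda_1<\cdots<\lambda_k<\partial\beta\cap\gamma$, i.e. that $t\mapsto\lambda(t)$ is a monotone change of variable identifying the fiber with an open simplex in the $\lambda$-coordinates. There is no reason for $u^*\gamma$ to be a nonnegative density on $\partial D^2$: the boundary of a holomorphic disc can move ``backwards'' against $\gamma$, and indeed $\partial\beta\cap\gamma$ may be negative or zero, in which case your simplex is empty while the left-hand side need not vanish term by term. The standard repair is symmetrization rather than a change of variable: the fiberwise integrand $\prod_{i=1}^{k}\lambda'(t_i)\,dt_1\cdots dt_k$ is symmetric in $t_1,\dots,t_k$, so its integral over the ordered region $\{0\le t_1\le\cdots\le t_k\le 1\}$ equals $\frac{1}{k!}$ times its integral over the cube $[0,1]^k$, which factorizes as $\frac{1}{k!}\bigl(\int_{\partial D^2}u^*\gamma\bigr)^{k}=\frac{1}{k!}(\partial\beta\cap\gamma)^{k}$ regardless of sign. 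This also makes your proposed separate treatment of the degenerate case $\partial\beta\cap\gamma=0$ unnecessary. With that substitution, your argument coincides with the cited one.
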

Using the similar method, one can prove the following.
\begin{lemma}
Let $\gamma_1,\cdots,\gamma_n$ be 1-forms on $L$.
$$ \sum_{\sigma \in S_k} m_{k,\beta}(\gamma_{\sigma(1)},\cdots,\gamma_{\sigma(k)}) = c_\beta \prod_{i=1}^k (\partial \beta \cap \gamma_i) $$
\end{lemma}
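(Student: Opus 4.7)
The plan is to deduce this multilinear identity from Lemma \ref{lem:cpmk} by a standard polarization argument, using that both sides are multilinear in the one-form inputs. Concretely, assume the $\gamma_i$ are harmonic (which is the situation we need, and what Lemma \ref{lem:cpmk} demands), introduce formal parameters $t_1,\ldots,t_k$, and consider the harmonic one-form
\[
\gamma \;=\; t_1\gamma_1 + \cdots + t_k\gamma_k.
\]
Applying Lemma \ref{lem:cpmk} to $\gamma$ gives an identity between elements of $CF(L,L)$ whose coefficients are polynomials in $t_1,\ldots,t_k$, and the desired formula will fall out by equating the coefficient of the squarefree monomial $t_1 t_2 \cdots t_k$ on the two sides.

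On the left, multilinearity of $m_{k,\beta}$ yields
\[
m_{k,\beta}(\gamma,\ldots,\gamma) \;=\; \sum_{(i_1,\ldots,i_k)} t_{i_1}\cdots t_{i_k}\, m_{k,\beta}(\gamma_{i_1},\ldots,\gamma_{i_k}),
\]
whose coefficient of $t_1\cdots t_k$ is precisely $\sum_{\sigma \in S_k} m_{k,\beta}(\gamma_{\sigma(1)},\ldots,\gamma_{\sigma(k)})$. On the right, Lemma \ref{lem:cpmk} gives
\[
\frac{c_\beta}{k!}(\partial\beta \cap \gamma)^k \,\be \;=\; \frac{c_\beta}{k!}\Bigl(\sum_{i=1}^k t_i (\partial\beta\cap\gamma_i)\Bigr)^{\!k}\!\be,
\]
and by the multinomial theorem the coefficient of $t_1\cdots t_k$ equals $c_\beta \prod_{i=1}^k(\partial\beta\cap\gamma_i)\,\be$. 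Comparing the two coefficients proves the claim.

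I do not expect a serious obstacle, but two minor points require care. First, the identity from Lemma \ref{lem:cpmk} must hold as an equality of elements of $CF(L,L)$ with polynomial coefficients in the $t_i$; this is automatic because $m_{k,\beta}$ is $\R$-multilinear and the formula in Lemma \ref{lem:cpmk} is itself a polynomial identity in the inputs. Second, one should note that this polarization works because the relevant conclusion concerns $m_{k,\beta}$ applied to one-forms drawn from the fixed space of harmonic representatives of $H^1(L;\C)$, so linear combinations remain in the class to which Lemma \ref{lem:cpmk} applies. If one wants the statement for general (not necessarily harmonic) one-forms, one would instead invoke the $T^n$-equivariance of the moduli of holomorphic discs through a toric fiber and run the analogous evaluation-map computation from \cite{FOOOtoric1} directly, but the polarization route above suffices for our applications.
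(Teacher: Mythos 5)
Your argument is correct, but it takes a different route from the paper. The paper offers no real proof of this lemma: it simply says that the statement follows ``using the similar method'' as Lemma \ref{lem:cpmk}, i.e.\ by re-running the $T^n$-equivariant evaluation-map computation of \cite{FOOOtoric1} (Lemma 11.8) with $k$ distinct one-form inputs in place of $k$ copies of a single one. You instead take Lemma \ref{lem:cpmk} as a black box and derive the multilinear version purely formally by polarization: substitute $\gamma=\sum_i t_i\gamma_i$, note that a linear combination of harmonic forms is harmonic so the single-form lemma applies for every real specialization of the $t_i$, and compare the coefficient of $t_1\cdots t_k$ on both sides, where the $k!$ from the multinomial expansion cancels the $1/k!$ in Lemma \ref{lem:cpmk}. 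This is clean and complete (the $t_i$ are even scalars, so no Koszul signs intervene, and both sides are $CF(L,L)$-valued polynomials agreeing on all of $\R^k$), and it buys economy: no need to reopen the moduli-space analysis. What the paper's route buys in exchange is that it does not presuppose the inputs lie in a linear space closed under the hypothesis of Lemma \ref{lem:cpmk}; since on a torus fiber the harmonic (equivalently $T^n$-invariant) representatives are exactly what the canonical model uses, your restriction is harmless for the intended application, as you correctly observe. One small point in your favor: you make explicit the factor of $\be$ on the right-hand side, which is implicit (indeed omitted) in the paper's statement but clearly intended, by consistency with Lemma \ref{lem:cpmk} and with the displayed consequences for $m^b_{1,\beta}(E_i^*)$ and $m^b_{2,\beta}(E_i^*,E_j^*)$ that follow.
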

In particular, this implies that
$$m_{1,\beta}^b(E_i^*)= ( \partial \beta \cap E_i^* ) \cdot m_{0,\beta}^b, m_{2,\beta}^b(E_i^*, E_j^*) = (\partial \beta \cap E_i^*) \cdot (\partial \beta \cap E_i^*) m_{0,\beta}^b$$
Thus we have
\begin{equation}\label{eq:dd}
y_i \frac{\partial W}{\partial y_i} \cdot \be = \frac{\partial W}{\partial x_i}  \cdot \be = m_{1}^b(E_i^*)
\end{equation}
\begin{equation}\label{eq:dd1}
y_i y_j \frac{\partial^2 W}{\partial y_i \partial y_j} \cdot \be=  \frac{\partial^2 W}{\partial x_i \partial x_j} \cdot \be = m_2^b(E_i^*, E_j^*) +m_2^b(E_j^*, E_i^*)
\end{equation}

For (2), note that $m_2 = m_{2,0} + \sum_\beta m_{2,\beta}$
and $m_{2,0}^b = m_{2,0}$ for both immersed and toric cases. Since $m_{2,\beta}$ has
higher valuation, and the $\AI$-algebras involved are gapped filtered, the result follows.

For (3), consider the Leibniz rule 
$$m_1^b(m_2^b(\alpha,X_i))= - m_2^b(m_1^b(\alpha),X_i)  + (-1)^{|\alpha|} m_2^b(\alpha,m_1^b(X_i)).$$
It shows that if $m_1^b(\alpha)$ lies in the Jacobian ideal, then $m_1^b(m_2^b(\alpha,X_i))$ also lies in the Jacobian ideal.
This proves (3).
\end{proof}

\section{Boundary-bulk map and $B$-invariants}\label{sec:bb}
In this section, we  show that diagram \eqref{diagram0} commutes. This explains the relation between  $B$-invariants and boundary-bulk maps in $B$-model.
\begin{definition} 
 The {\em boundary-bulk map}
 $$\tau: End_{\AI(MF(W_\bL)) }(P, Q ) \to Jac(W_\bL) \cdot dx_1 \wedge \cdots \wedge dx_n$$ is defined as
$$f \mapsto  (-1)^{n} \frac{1}{n!}\str( f \circ (d Q)^{\wedge n}).$$
\end{definition}

%\begin{remark}
%The additional sign  $(-1)^{\frac{n(n-1)}{2}}$ seem to be necessary to compare open and closed
%theories. Namely, the same sign factor is also needed in the $A$-model side to compare the Mukai pairing of
%open theory and closed string pairing. See Lemma \ref{signmukai}.
%\end{remark}

%Hence, we check the commutativity of the lower left triangle of the diagram \eqref{firstdiagram1}.
%Namely, if we send $\alpha$ via localized mirror functor $\mathcal{F}^\bL$ and
%take the boundary-bulk map $\tau$, we should obtain the $B$-invariant.
%\begin{theorem}\label{main1}
%Under the assumption $*$, for any $\alpha \in HF(L,L)$, we have
%$$B(\alpha)  = \tau \big( \mathcal{F}^\bL (\alpha) \big).$$
%\end{theorem}
%The proof of this theorem is based on, what we call a {\bf multi-Cardy identity}.
%Recall that the usual Cardy-identity comes from two possible degeneration of $J$-holomorphic annulus.
%One degeneration comes from the open-closed map composed with closed open map as in the Figure \ref{fig:sandglass}.
%The other degeneration is given by attaching two discs at two boundary points as in the Figure \ref{fig:bicrescent},
%which we call bi-crescent. The following identity relates these to the chain (in fact a loop) of several discs as in the Figure \ref{fig:supertrace}.
%This is why we call it {\bf multi-crescent Cardy identity} or {\bf multi-Cardy identity} in short.
%\begin{figure}
%	\includegraphics{supertrace.png}
%	\caption{$\str(\partial_{x_n} (m_1^{b,b_0}) \circ \cdots \circ \partial_{x_1} (m_1^{b,b_0})\circ p_\alpha(\cdot))$.}
%	\label{fig:supertrace}
%\end{figure}

Localized mirror functor allows us to interpret $Q$ as a (deformed) Floer differential $(-m_1^{\xi,b})$, and $\partial_{x_i}Q$ as a (deformed) $m_2$-products with $X_i$ (i.e. $m_2^{\xi,b}(\bullet,X_i)$).
Thus, the boundary-bulk map provides a geometric configuration of multi-crescent discs in Figure \ref{fig:supertrace}.
We prove in the following main theorem that this can be reduced to bi-crescent discs with $m_2$-discs attached as in Figure \ref{fig:supertrace2} using $\AI$-identities and the properties of supertraces
\begin{theorem}\label{mainthm}
We have following identity of supertraces (modulo Jacobian ideal relations).
	\begin{align} 
	\str\Big( \mathcal{F}^\bL(\alpha) \circ \partial_{x_1} m_1^{\xi,b} \circ \cdots \circ \partial_{x_n} m_1^{\xi,b}  \Big) \label{supertrace}= \str\big( \bullet \mapsto (-1)^\epsilon  \mathcal{F}^\bL(\alpha) \circ m_2^{\xi,b,b}(\bullet,[X_n,...,X_1]) \big),
	\end{align}
	where $[X_n,\cdots, X_1]$ is defined in Definition \ref{defn:fv} and 
		\[ \epsilon = 1 + \frac{n(n+1)}{2} + (n-1)|\bullet|. \]
\end{theorem}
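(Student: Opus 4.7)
The plan is to first translate each derivative $\partial_{x_i}m_1^{\xi,b}$ into multiplication by $X_i$, assembling the LHS into a left-nested $m_2$-product of $\bullet$ against $X_n,\ldots,X_1$, and then collapse this string into $m_2(\bullet,[X_n,\ldots,X_1])$ by repeated use of the $A_\infty$-associator, absorbing every correction term either into the Jacobian ideal or into expressions whose supertrace vanishes.

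\textbf{Step 1 (derivative as $m_2$).} Since $b=\sum_i x_i X_i$ in the immersed case (or $b=\sum_i x_i\theta_i$ with $[\theta_i]=E_i^*$ in the toric case), applying $\partial_{x_i}$ to $m_1^{\xi,b}(\bullet)=\sum_{p,q}m_{p+q+1}(\xi^{\otimes p},\bullet,b^{\otimes q})$ replaces one $b$ entry by $X_i$ and sums over its insertion site. Term-by-term matching against the analogous expansion of $m_2^{\xi,b,b}(\bullet,X_i)$ gives
\[ \partial_{x_i}m_1^{\xi,b}(\bullet)=m_2^{\xi,b,b}(\bullet,X_i). \]
Iterating and composing with $\mathcal{F}^\bL_1(\alpha)(\cdot)=m_2^{\xi,\xi,b}(\alpha,\cdot)$ rewrites the left-hand side of the theorem as
\[ \str\Bigl(\bullet\mapsto m_2^{\xi,\xi,b}\bigl(\alpha,\;m_2(\cdots m_2(m_2(\bullet,X_n),X_{n-1})\cdots,X_1)\bigr)\Bigr). \]

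\textbf{Step 2 (re-associate into the bracket).} Next I would apply the $A_\infty$-associator
\[ m_2(m_2(A,B),C)+(-1)^{|A|'}m_2(A,m_2(B,C))+m_1m_3(A,B,C)+\sum_{\text{inputs}}m_3(\ldots,m_1(\cdot),\ldots)=0 \]
exactly $n(n-1)/2$ times, to reparenthesize the inner left-nested string into $m_2(\bullet,[X_n,\ldots,X_1])$. Each move produces error terms of two kinds: the terms $m_3(\ldots,m_1(X_i),\ldots)=\partial_{x_i}W\cdot m_3(\ldots,\be,\ldots)$ vanish by the strict unit axiom, while $m_1m_3(\ldots)$ together with $m_3(m_1(\bullet),\ldots)$ assemble into a graded commutator $m_1\circ B+B\circ m_1$ for an operator $B$ built from lower $m_k$'s. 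Because $\mathcal{F}^\bL(\alpha)$ is a chain map for closed $\alpha$ and because $m_1\circ m_2(\cdot,X_i)+m_2(\cdot,X_i)\circ m_1\equiv 0$ modulo the Jacobian ideal (by the $A_\infty$-relation at $n=2$ together with $m_1(X_i)=\partial_{x_i}W\cdot\be$), such a commutator can be moved past $\mathcal{F}^\bL(\alpha)$ and past the remaining $\partial_{x_j}m_1$ factors at the cost of Jacobian terms, at which point its supertrace vanishes by Lemma \ref{lem:stp}(3), exactly as in the proof of Theorem \ref{thm:bw}.

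\textbf{Step 3 (sign accounting).} The surviving sign is a product of Koszul factors $(-1)^{|A|'+1}$, one from each of the $n(n-1)/2$ moves. The $n-1$ moves that pull $\bullet$ out by a single level have $A$ a prefix ending in $\bullet$ and contribute $(-1)^{|\bullet|}$ apiece, producing the $(n-1)|\bullet|$ portion of $\epsilon$. The remaining $(n-1)(n-2)/2$ moves act only on degree-one elements $X_i$ and their nested $m_2$-products; these Koszul factors combine with the normalization $(-1)^{n+1}$ built into Definition \ref{defn:fv} to yield the quadratic $n(n+1)/2$. Verification for $n=1,2,3$ by hand (one, one, and three associator moves respectively) confirms the formula $\epsilon=1+n(n+1)/2+(n-1)|\bullet|$.

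The principal obstacle is the uniform bookkeeping in Step 2: one must check, across all $n(n-1)/2$ associator moves, that every error term falls into one of the two admissible classes (a Jacobian multiple, or a commutator of the form $m_1\circ B+B\circ m_1$ whose supertrace vanishes), and one must track how these cancellations propagate when additional $\partial_{x_j}m_1$ operators are composed on the right. The cleanest route is induction on $n$ that peels off one $X_i$ at a time into the bracket while preserving the form of the error terms; the inductive step requires only the strict unit axiom, the $A_\infty$-relation at $n=2$ modulo the Jacobian ideal, and Lemma \ref{lem:stp}(3).
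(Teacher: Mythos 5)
Your overall strategy is the same as the paper's: Step 1 (the identity $\partial_{x_i}m_1^{\xi,b}(\bullet)=m_2^{\xi,b,b}(\bullet,X_i)$, assembling the left-hand side into a left-nested string of $m_2$'s) is exactly the paper's first move, and your two-pronged disposal of the error terms in Step 2 --- terms with $m_1(X_i)=\partial_{x_i}W\cdot\be$ die by the unit axiom or land in the Jacobian ideal, while the $m_1m_3+m_3(m_1(\bullet),\ldots)$ terms assemble into a coboundary $\delta(H)$ whose supertrace against the $\delta$-closed operator $\mathcal{F}^\bL(\alpha)$ vanishes by Lemma \ref{lem:stp}(3) --- is precisely the paper's argument. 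The one organizational difference is the choice of path through the associahedron: the paper uses the short path of exactly $n-1$ re-associations, at each stage applying the $A_\infty$-relation to the triple $\big(K_{j}(\bullet),\,X_{j-1},\,m_2(X_{j-1},\ldots)\big)$ so that the $\bullet$-prefix is always the leftmost input; this makes the sign a single telescoping sum $\sum_{k=0}^{n-2}(k+|\bullet|)=\tfrac{(n-1)(n-2)}{2}+(n-1)|\bullet|\equiv\epsilon$, and it also keeps every $\delta(H)$ error term at the outermost level, where Lemma \ref{lem:stp}(3) applies directly without having to propagate commutators through intermediate nestings.

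The genuine gap is Step 3. With your longer path of $n(n-1)/2$ moves the sign is not derived but asserted, and the two justifications you give are both wrong. First, a move that ``pulls $\bullet$ out by a single level'' contributes $(-1)^{|A|'+1}$ where $A$ is the \emph{entire} prefix still containing $\bullet$ and some of the $X_j$'s, not $(-1)^{|\bullet|}$; the exponent depends on how many $X_j$'s remain attached, which is exactly why the paper's telescoping sum has varying summands $k+|\bullet|$. Second, the normalization $(-1)^{n+1}$ from Definition \ref{defn:fv} cannot enter this computation at all: the theorem is stated in terms of $[X_n,\ldots,X_1]$ itself, not $vol^{Floer}$, so that factor is simply not available to absorb leftover Koszul signs. (Your hand-check is also off at $n=1$, where zero associator moves are needed and $\epsilon\equiv 0$.) Because the final sign is forced by coherence, your longer path would eventually give the same $\epsilon$, but as written the derivation does not establish it; you should either redo the Koszul bookkeeping honestly along your path or switch to the paper's $(n-1)$-move path where the computation is a one-line sum.
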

\begin{figure}
	\includegraphics{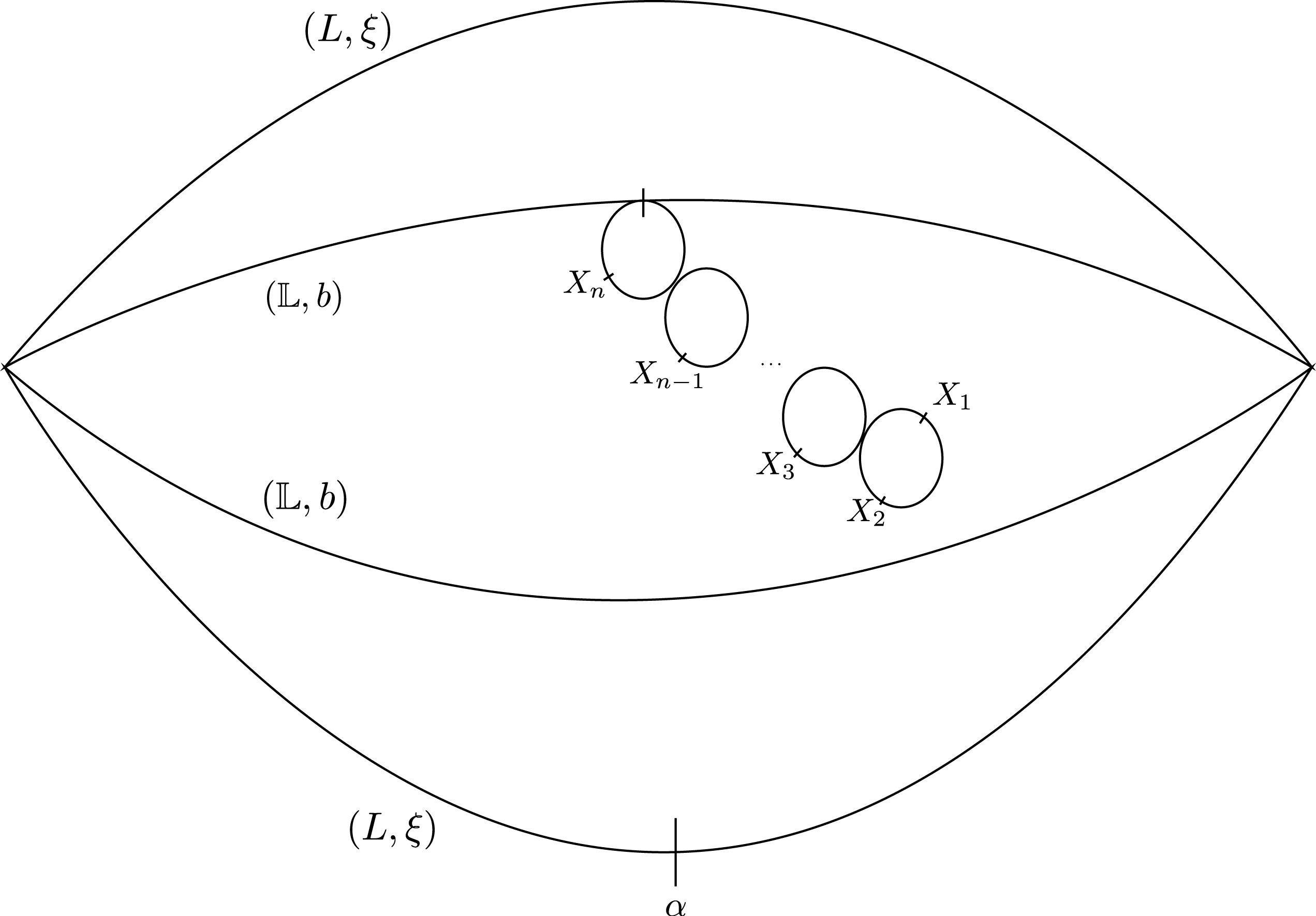}
	\caption{RHS of \eqref{supertrace}.}
	\label{fig:supertrace2}
\end{figure}
For the toric case, the above can be stated with $y_i$ variables as well, since we have  
\[\frac{\partial}{\partial x_i}= y_i \frac{\partial}{\partial y_i}, dx_i = \frac{dy_i}{y_i}.\]
Hence
 $$\str(f\circ \partial_{x_1} Q \circ \cdots \circ \partial_{x_{n}} Q)\cdot dx_1 \wedge \cdots \wedge dx_n =  \str(f\circ y_1\partial_{y_1} Q \circ \cdots \circ y_n \partial_{y_{n}} Q)\cdot \frac{dy_1}{y_1} \wedge \cdots \wedge \frac{dy_n}{y_n}$$
$$ = \str(f\circ \partial_{y_1} Q \circ \cdots \circ \partial_{y_{n}} Q)\cdot dy_1 \wedge \cdots \wedge dy_n$$

To relate this to the commutativity, we recall from Polishchuk-Vaintrob:
\begin{theorem}[Corollary 3.2.4 \cite{PV10}]\label{perminv}
We have 
	\begin{equation} 
	\frac{1}{n!}\str(f\circ (dQ)^{\wedge n} ) = \str(f\circ \partial_{x_1} Q \circ \cdots \circ \partial_{x_{n}} Q)\cdot dx_1 \wedge \cdots \wedge dx_n.
	\end{equation}
	Moreover the right hand side of the above equation is invariant  under the permutation of indices $(1,2,\ldots,n)$
	( in $Jac(W_\bL)dx_1 \cdots dx_n$ ).
\end{theorem}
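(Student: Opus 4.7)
The plan is to reduce both assertions to a single computation modulo the Jacobian ideal, since the first identity follows from the second after a direct wedge expansion.

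\textbf{Step 1: Algebraic expansion.} Writing $dQ=\sum_{i=1}^n (\partial_{x_i}Q)\,dx_i$ and expanding the $n$-fold matrix--form wedge, the fact that coincident $dx_i$'s vanish and distinct ones anticommute gives
\[
(dQ)^{\wedge n} \;=\; \sum_{\sigma\in S_n}\mathrm{sgn}(\sigma)\,\partial_{x_{\sigma(1)}}Q\circ\cdots\circ\partial_{x_{\sigma(n)}}Q\;dx_1\wedge\cdots\wedge dx_n.
\]
Dividing by $n!$ and applying $\str(f\circ-)$, the first identity of the theorem becomes the claim that
\[
\mathrm{sgn}(\sigma)\,\str\bigl(f\circ\partial_{x_{\sigma(1)}}Q\circ\cdots\circ\partial_{x_{\sigma(n)}}Q\bigr)\;\equiv\;\str\bigl(f\circ\partial_{x_1}Q\circ\cdots\circ\partial_{x_n}Q\bigr)\quad(\mathrm{mod}\;\mathrm{Jac}(W_\bL))
\]
for every $\sigma\in S_n$; this is precisely the second assertion. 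Both parts therefore collapse into sign-corrected permutation invariance.

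\textbf{Step 2: Adjacent transpositions.} By a standard reduction I would only check swaps of two neighboring positions $j,j{+}1$, which amounts to
\[
\str\bigl(f\cdot Y_1\cdots Y_{j-1}\cdot\{\partial_a Q,\partial_b Q\}\cdot Y_{j+2}\cdots Y_n\bigr)\;\equiv\;0\quad(\mathrm{mod}\;\mathrm{Jac}(W_\bL)),
\]
where $a,b$ are the indices being swapped and the remaining $n-2$ factors $Y_\bullet$ are the other $\partial_{x_k} Q$'s. Differentiating $Q^2=W_\bL-\lambda$ once gives $\{Q,\partial_i Q\}=\partial_i W_\bL\cdot I$, and differentiating again yields the key identity
\[
\{\partial_a Q,\partial_b Q\}\;=\;\partial_a\partial_b W_\bL\cdot I-\{Q,\partial_a\partial_b Q\}.
\]
For the second piece, super-cyclicity of the supertrace yields $\str\circ[Q,-]_{\mathrm{super}}=0$, and super-Leibniz moves $Q$ across the product; every term in which $Q$ lands on some $Y_k$ produces $[Q,Y_k]_{\mathrm{super}}=\partial_{k'}W_\bL\cdot I$, which is in the Jacobian ideal. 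For the first piece, the coefficient $\partial_a\partial_b W_\bL$ is absorbed into $\mathrm{Jac}(W_\bL)$ by recognizing $\partial_a\partial_b W_\bL\cdot\str(f\cdot Y_1\cdots Y_{n-2})$ as the $x_b$-derivative of $\partial_a W_\bL\cdot\str(f\cdot Y_1\cdots Y_{n-2})$ up to further terms that lie in the ideal by the same super-Leibniz analysis.

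\textbf{Main obstacle.} The delicate point is the sign and index bookkeeping when iterating super-Leibniz across an $n$-fold product while checking that every residual term really lands in $(\partial_1 W_\bL,\ldots,\partial_n W_\bL)$. The conceptually cleanest route I would take is to present $f\mapsto \str(f(dQ)^{\wedge n})/n!$ as a chain-level representative of the boundary-bulk map landing in the Hochschild homology of $MF(W_\bL)$, identified with $\Omega^n_R/dW_\bL\wedge\Omega^{n-1}_R \cong \mathrm{Jac}(W_\bL)\,dx_1\cdots dx_n$; permutation invariance is then automatic from well-definedness on $HH_*$, and what remains is to verify that the explicit formula represents the claimed class, which is Polishchuk--Vaintrob's calculation.
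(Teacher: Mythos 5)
First, a point of comparison: the paper does not prove this statement at all — it is quoted as Corollary 3.2.4 of Polishchuk--Vaintrob \cite{PV10}, so the paper's only ``proof'' is that citation. Your Step 1 is correct and does reduce both assertions to sign-corrected permutation invariance modulo the Jacobian ideal, and the first half of your Step 2 (killing $\{Q,\partial_a\partial_b Q\}$ via $\str([Q,-]_{\mathrm{super}})=0$ and super-Leibniz) is sound, \emph{provided} you add the hypothesis that $f$ is $\delta$-closed: when $Q$ is moved across the product it also lands on $f$, producing a term $\str(\delta(f)\cdot\cdots)$ that you silently dropped; without $\delta f=0$ the claim is not expected to hold, and in the paper the theorem is only ever applied to closed morphisms $\mathcal{F}^\bL(\alpha)$.

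The genuine gap is your treatment of the Hessian term. You claim that $\partial_a\partial_b W_\bL\cdot\str(f\,Y_1\cdots Y_{n-2})$ lies in the Jacobian ideal because it is ``the $x_b$-derivative of $\partial_a W_\bL\cdot\str(f\,Y_1\cdots Y_{n-2})$ up to terms in the ideal.'' But the Jacobian ideal is not closed under differentiation: from
\[
\partial_b\bigl(\partial_a W\cdot g\bigr)=\partial_a\partial_b W\cdot g+\partial_a W\cdot\partial_b g
\]
nothing follows, since the left-hand side, being the derivative of an element of the ideal, need not lie in the ideal (for $W=x^2$ one has $\partial_x(x\cdot 1)=1\notin(x)$). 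Indeed, in general multiplication by $\partial_a\partial_b W$ cannot ``absorb'' anything into the ideal — at a Morse point the Hessian is invertible modulo the maximal ideal — so the vanishing must come from the supertrace factor itself. What is actually needed at this point is the nontrivial statement that for closed $f$ the shorter supertraces $\str\bigl(f\circ\partial_{k_1}Q\circ\cdots\circ\partial_{k_m}Q\bigr)$ with $m\le n-2$ lie in the Jacobian ideal, i.e.\ that the chain-level boundary-bulk image is concentrated in top form degree. For $n=2$ this is elementary ($\str(f)=0$ for any even closed $f$, by conjugating with $Q$ over the locus where $W\neq\lambda$ and using torsion-freeness), but for $n>2$ the truncated product $f\,Y_1\cdots Y_{n-2}$ is no longer closed, and you give no argument; this concentration is precisely part of Polishchuk--Vaintrob's Hochschild-homology computation. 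Consequently your closing paragraph, which defers to the identification $HH_*(MF(W_\bL))\cong \Jac(W_\bL)\,dx_1\cdots dx_n$ and to ``Polishchuk--Vaintrob's calculation,'' is not a safety net for the elementary argument — it \emph{is} the whole proof, i.e.\ exactly the citation the paper makes.
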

Combining the above two theorems,  we obtain the commutativity of the diagram \ref{diagram7} as a corollary.

\begin{corollary}\label{cor:ll}
\begin{equation}\label{eq:ll}
  (-1)^{n} \frac{1}{n!}\str(  \mathcal{F}^\bL(\alpha) \circ (dQ)^{\wedge n} ) =   B(\alpha)
  \end{equation}
\end{corollary}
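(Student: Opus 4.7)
The plan is to assemble the corollary from the two main inputs already stated: Theorem \ref{perminv} of Polishchuk--Vaintrob (which repackages the wedge form $\frac{1}{n!}(dQ)^{\wedge n}$ as a single ordered product $\partial_{x_1} Q \circ \cdots \circ \partial_{x_n} Q$ paired with $dx_1 \wedge \cdots \wedge dx_n$) together with Theorem \ref{mainthm} (which converts the chain of $\partial_{x_i} m_1^{\xi,b}$'s inside the supertrace into a single $m_2^{\xi,b,b}(-,[X_n,\ldots,X_1])$ operation). Matching the resulting expression against the definition of $B(\alpha)$ in \eqref{eq:Bdef} is then a book-keeping check on signs.

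Concretely, I would proceed in the following four steps. First, apply Theorem \ref{perminv} with $f = \mathcal{F}^{\bL}(\alpha)$ to rewrite
\[ (-1)^n\frac{1}{n!}\str\bigl(\mathcal{F}^{\bL}(\alpha)\circ (dQ)^{\wedge n}\bigr) = (-1)^n\str\bigl(\mathcal{F}^{\bL}(\alpha)\circ \partial_{x_1}Q\circ\cdots\circ\partial_{x_n}Q\bigr)\,dx_1\wedge\cdots\wedge dx_n. \]
Second, use the localized mirror functor identification $Q = -m_1^{\xi,b}$, so that each $\partial_{x_i}Q = -\partial_{x_i} m_1^{\xi,b}$. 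The product of $n$ such factors contributes an additional $(-1)^n$, and this absorbs the $(-1)^n$ in front, leaving
\[ \str\bigl(\mathcal{F}^{\bL}(\alpha)\circ \partial_{x_1}m_1^{\xi,b}\circ\cdots\circ\partial_{x_n}m_1^{\xi,b}\bigr)\,dx_1\wedge\cdots\wedge dx_n. \]
Third, apply Theorem \ref{mainthm} directly to the integrand, replacing the $n$-fold chain with
\[ (-1)^{1+\frac{n(n+1)}{2}+(n-1)|\bullet|}\,\mathcal{F}^{\bL}(\alpha)\circ m_2^{\xi,b,b}\bigl(\bullet,[X_n,\ldots,X_1]\bigr). \]
Finally, unfold $\mathcal{F}^{\bL}(\alpha)(v) = m_2^{\xi,\xi,b}(\alpha,v)$ from the definition of the localized mirror functor in \eqref{lmfunctor}, pull the $\bullet$-independent sign $(-1)^{1+\frac{n(n+1)}{2}}$ out of the supertrace, and compare with the definition \eqref{eq:Bdef} of $B(\alpha)$; the two expressions agree term-for-term, including the sign.

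I do not expect any real obstacle: the corollary is essentially a concatenation of the two preceding theorems, and the only thing that can go wrong is a miscounted sign. The two sources of signs are the $(-1)^n$ from converting $Q$ to $-m_1^{\xi,b}$ in each of the $n$ partial derivatives, and the $(-1)^{1+n(n+1)/2+(n-1)|\bullet|}$ supplied by Theorem \ref{mainthm}; these combine cleanly with the prefactor $(-1)^n$ on the left to reproduce exactly the $(-1)^{n(n+1)/2+1}$ and $(-1)^{(n-1)|\bullet|}$ in \eqref{eq:Bdef}. One small point worth noting is that Theorem \ref{mainthm} is an identity in $\mathrm{Jac}(W_{\bL})\,dx_1\cdots dx_n$ (i.e.\ modulo Jacobian relations), and correspondingly $B(\alpha)$ takes values in $\mathrm{Jac}(W_{\bL})\,dx_1\cdots dx_n$ by Theorem \ref{thm:bw}, so the equality in \eqref{eq:ll} is to be read in this same target.
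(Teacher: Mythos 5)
Your proposal is correct and follows essentially the same route as the paper's own proof: apply Theorem \ref{perminv} to convert $\frac{1}{n!}(dQ)^{\wedge n}$ into the ordered product of $\partial_{x_i}Q$'s, absorb the $(-1)^n$ coming from $Q=-m_1^{\xi,b}$, and then invoke Theorem \ref{mainthm} to match the sign $(-1)^{\frac{n(n+1)}{2}+1+(n-1)|\bullet|}$ against the definition \eqref{eq:Bdef} of $B(\alpha)$. Your closing remark that the equality is read in $\mathrm{Jac}(W_{\bL})\,dx_1\cdots dx_n$ is also consistent with how the paper treats it.
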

\begin{proof}
Let us discuss the immersed case first.
Under localized mirror functor, we have $Q = - m_1^{b_0,b}$ (which gives additional $(-1)^n$ from $(dQ)^{\wedge n} = d(-m_1^{\xi,b})^{\wedge n}$).
Thus, we can combine the above two theorem to show that LHS of \eqref{eq:ll} equals
$$ \str\big( \bullet \mapsto (-1)^{\epsilon'}  \mathcal{F}^\bL(\alpha) \circ m_2^{b_0,b,b}(\bullet,[X_n,...,X_1]) \big)dx_1\wedge\cdots\wedge dx_n $$
where $\epsilon' =\frac{n(n+1)}{2} +1 + (n-1)|\bullet|$, which is the sign for $B$-invariant.

\end{proof}

Let us prove Theorem \ref{mainthm}.
\begin{proof}
	Let $\bullet \in CF^*(L,\LL).$ Since $m_1^{\xi,b}(\bullet)=\sum_{k\geq 0} m_{k+1}(\xi,...,\xi,\bullet,b,...,b)$, we have \[\partial_{x_i}(m_1^{\xi,b}(\bullet))=\sum_{k\geq 1} m_{k+1}(\xi,...,\xi,\bullet,b,...,b,X_i,b,...,b)=m_2^{\xi,b,b}(\bullet,X_i).\] 
	Hence, we can write 
	\begin{equation}\label{ppp1}
	\partial_{x_1}(m_1^{\xi,b})\circ \cdots \circ \partial_{x_n}(m_1^{\xi,b})(\bullet)
	=m_2^{\xi,b,b}\Big(m_2^{\xi,b,b}\big(\cdots m_2^{\xi,b,b}(m_2^{\xi,b,b}(\bullet,X_n),X_{n-1}),\cdots,X_2\big),X_1\Big).
	\end{equation}
	
	We will modify the above expression using various $\AI$-equations.
	
	Let us denote 
	$$K_j(\bullet):= m_2^{\xi,b,b}\big(m_2^{\xi,b,b}(\cdots m_2^{\xi,b,b}(\bullet,X_n),X_{n-1}),\cdots,X_j\big),$$
	for $ 1\leq j \leq n$. The degree of $K_j(\bullet)$ is $|\bullet| + n-j+1$.
	
	The expression \eqref{ppp1} can be written as $$m_2^{\xi,b,b}\big(m_2^{\xi,b,b}(K_3(\bullet),X_2),X_1\big).$$
	We consider the following $\AI$-equation of $K_3(\bullet)$, $X_2$ and $X_1$, which contains the above term.
	
	\begin{align}
  0= m_2^{\xi,b,b}\Big(m_2^{\xi,b,b}\big(K_3(\bullet),X_2\big),X_1\Big) & +(-1)^{\epsilon_1} m_2^{\xi,b,b}\Big(K_3(\bullet),m_2^{b,b,b}(X_2,X_1)\Big) \label{contribute} \\
  +m_1^{\xi,b}m_3^{\xi,b,b,b}\Big(K_3(\bullet),X_2,X_1\Big) 
	& +m_3^{\xi,b,b,b}\Big(m_1^{\xi,b}\big(K_3(\bullet)\big),X_2,X_1\Big) \label{htpy1}\\
 + (-1)^{\epsilon_2} m_3^{\xi,b,b,b}\Big(K_3(\bullet),m_1^{b,b}(X_2),X_1\Big)  
	& +(-1)^{\epsilon_3}  m_3^{\xi,b,b,b}\Big(K_3(\bullet),X_2,m_1^{b,b}(X_1)\Big). \label{unit1}
	\end{align}
	Here, the terms  containing $m_0^b$ or $m_0^{\xi}$ vanishes due to the weakly unobstructedness.
	Here $\epsilon_1=n-3 + |\bullet| \equiv 1 + n + |\bullet|$, which is the shifted degree of $K_3(\bullet)$.
    Also two terms in \eqref{unit1} lies in the Jacobian ideal from Lemma \ref{lem:fg} (3).
    
      Now, we want to argue that two terms in \eqref{htpy1} can be further transformed into a nice form.
Let us denote
 $$H(\bullet):= m_3^{\xi,b,b,b}\Big(K_3(\bullet),X_2,X_1\Big).$$
\begin{lemma}
	The expression \eqref{htpy1} equals 
	$$\delta(H(\bullet))  = m_1^{\xi,b} H(\bullet) - (-1)^{|H|} H( m_1^{\xi,b}(\bullet) )$$
	modulo Jacobian ideal.
\end{lemma}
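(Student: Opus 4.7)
My plan is to compare \eqref{htpy1} with $\delta(H(\bullet))=m_1^{\xi,b}(H(\bullet))-(-1)^{|H|}H(m_1^{\xi,b}(\bullet))$ term by term. By the definition $H(\bullet)=m_3^{\xi,b,b,b}(K_3(\bullet),X_2,X_1)$, the first summand of \eqref{htpy1} is literally $m_1^{\xi,b}(H(\bullet))$, so the entire content of the lemma reduces, via $R$-multilinearity of $m_3^{\xi,b,b,b}(-,X_2,X_1)$ in its first slot, to the key identity
\[
m_1^{\xi,b}(K_3(\bullet)) \;\equiv\; (-1)^{n-2}\,K_3(m_1^{\xi,b}(\bullet)) \pmod{\mathrm{Jac}(W_\bL)},
\]
together with the sign match $-(-1)^{|H|}=(-1)^{n-2}$.

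To establish the key identity, I will peel off the $m_2$'s in $K_j(\bullet)=m_2^{\xi,b,b}(K_{j+1}(\bullet),X_j)$ one at a time, from the outermost $X_3$ down to the innermost $X_n$. The $A_\infty$ Leibniz relation for the deformed operations gives
\[
m_1^{\xi,b}\bigl(m_2^{\xi,b,b}(K_{j+1}(\bullet),X_j)\bigr) = -\,m_2^{\xi,b,b}\bigl(m_1^{\xi,b}(K_{j+1}(\bullet)),X_j\bigr) \,-\, (-1)^{|K_{j+1}(\bullet)|'}\,m_2^{\xi,b,b}\bigl(K_{j+1}(\bullet),\,m_1^{b,b}(X_j)\bigr).
\]
By Lemma \ref{lem:fg}(1), $m_1^{b,b}(X_j)=(\partial W/\partial x_j)\cdot\be_\LL$, and the unital axiom collapses the last term to a scalar multiple $\pm(\partial W/\partial x_j)\cdot K_{j+1}(\bullet)$ of a single Floer cochain. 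Once this error term is substituted into the outer $m_3^{\xi,b,b,b}(-,X_2,X_1)$ (which is $R$-linear) and then through the supertrace, the result is a multiple of $\partial W/\partial x_j$ in $Jac(W_\bL)\,dx_1\wedge\cdots\wedge dx_n$, hence lies in the Jacobian ideal and may be discarded. Iterating the peel through the $n-2$ positions $j=3,4,\ldots,n$ collects one sign $-1$ per step, producing exactly the factor $(-1)^{n-2}$.

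The sign check is clean: $K_3(\bullet)$ has cohomological degree $|\bullet|+n-2$ and $m_3$ has degree $-1$, so the operator $H$ has degree $|H|=n-1$; hence $-(-1)^{|H|}=(-1)^n=(-1)^{n-2}$, matching the outcome of the peeling. The one point deserving care is the interpretation of ``modulo Jacobian ideal'' at the intermediate stage, since the displayed identity lives in the Floer complex rather than in $Jac(W_\bL)$ itself: one must verify that every discarded error term factors as $(\partial W/\partial x_j)\cdot(\text{Floer cochain})$ and is preserved in this form by the outer operations. This is automatic from the $R$-linearity of $m_2^{\xi,b,b}$, $m_3^{\xi,b,b,b}$ and the supertrace, so after these checks the proof reduces to bookkeeping and I do not anticipate any genuine obstacle.
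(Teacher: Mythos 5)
Your proposal is correct and follows essentially the same route as the paper's proof: both peel off the $m_2^{\xi,b,b}(-,X_j)$ factors one at a time via the $\AI$-Leibniz rule, discard the error terms $m_2^{\xi,b,b}(K_{j+1}(\bullet),m_1^{b,b}(X_j))$ as multiples of $\partial W/\partial x_j$ using Lemma \ref{lem:fg}, and collect the sign $(-1)^{n-2}=-(-1)^{|H|}$ over the $n-2$ steps. Your explicit justification that the discarded terms remain of the form $(\partial W/\partial x_j)\cdot(\text{cochain})$ after the outer $m_3$ and the supertrace is a point the paper leaves implicit, and it is handled correctly.
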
 
\begin{proof}
 Note that from the $\AI$-equation, we have
 $$ m_1^{\xi,b}(K_j(\bullet)) = - m_2^{\xi,b,b}\big(m_1^{\xi,b}(K_{j+1}(\bullet)),X_j\big) 
  \pm m_2^{\xi,b,b}(K_{j+1}(\bullet),m_1^{b,b}(X_j)) $$
 Note that the second term on the right hand side lies in the Jacobian ideal.
 Hence, successively applying the above, we have (modulo Jacobian ideal)
  $$ m_3^{\xi,b,b,b}\Big(m_1^{\xi,b}\big(K_3(\bullet)\big),X_2,X_1\Big) 
  = m_3^{\xi,b,b,b}\Big( (-1)m_2^{\xi,b,b}\big(m_1^{\xi,b}(K_{4}(\bullet)),X_3\big),X_2,X_1)$$
  $$ = \cdots = (-1)^{n-2} m_3^{\xi,b,b,b}\Big( m_2^{\xi,b,b}\big( \cdots  m_2^{\xi,b,b}(m_1^{\xi,b}(\bullet),X_n),
  \cdots,X_3\big),X_2,X_1 \Big)$$
  $$ = (-1)^{n-2} m_3^{\xi,b,b,b}\Big( K_3(m_1^{\xi,b}(\bullet)),X_2,X_1) = -(-1)^{|H|}H(m_1^{\xi,b}(\bullet)).$$
 This proves the lemma.
\end{proof}

This lemma is helpful because of Lemma \ref{strprop}(3).
Namely, if $\delta (p_\alpha) =0$, then
$$Str(p_\alpha \delta(H)) = \pm Str(\delta(p_\alpha)H) = 0.$$
Therefore, the contribution of the term \eqref{htpy1} to the supertrace is zero.

From \eqref{contribute}, the contribution to the supertrace of
$m_2^{\xi,b,b}\Big(m_2^{\xi,b,b}\big(K_3(\bullet),X_2\big),X_1\Big)$
is the same as that of $ (-1)^{n + |\bullet|} m_2^{\xi,b,b}\Big(K_3(\bullet),m_2^b(X_2,X_1)\Big)$.
Now, the latter can be written as 
$$ (-1)^{n + |\bullet|}  m_2^{\xi,b,b}\Big(m_2^{\xi,b,b}\big( K_4(\bullet),X_3),m_2^b(X_2,X_1)\Big).$$
By repeating the same arguments, we find that the contribution of the original term equals
that of
$$ (-1)^{\epsilon} m_2^{\xi,b,b}\Big(\bullet, m_2^b(X_n,m_2^b(X_{n-1},\cdots m_2^b(X_3,m_2^b(X_2,X_1))\cdots))\big) = m_2^{\xi,b,b}\Big(\bullet,  [X_n,...,X_1] \Big).$$
Here 
\[\epsilon =  (n-2) + |\bullet| + (n-3) + |\bullet| + \cdots + 0 + |\bullet| \equiv 1 + \frac{n(n+1)}{2} + (n-1)|\bullet| \mod 2.\]
\end{proof}

\section{Morse-Bott version of $B$-invariants for toric manifolds}
When we defined $B: HF(L,L) \to Jac(W_\bL)$ in section \ref{sec:B}, we excluded the
case $L = \bL$ for toric type. Namely, if the reference Lagrangian $\bL$ is a torus, then
we needed to introduce the notion of hyper-tori's to construct localized mirror functors in Section \ref{sec:toriclm}
and the localized mirror functor $\mathcal{F}^{\bL}$ on the object $\bL$ itself was ill-defined
and had to work with its hamiltonian perturbation.

But we show that  $B$-invariant  is well defined for the Bott-Morse case for toric manifolds.
\begin{prop}\label{prop:bm}
Let $M$ be a compact toric manifold, and $L$ a Lagrangian torus fiber. $B$-invariant for the Morse-Bott case $\bL=L$
$$B_{L}: HF(L,L) \to Jac(W_{L}) \frac{dy_1}{y_1} \wedge \cdots \wedge \frac{dy_n}{y_n} $$
is well-defined .
For any $L_1 = \phi_H(L)$, which is quasi-isomorphic  to $L$ in the sense of \ref{defn:qi},
their $B$-invariants $B_{L}$ and $B_{L_1}$ can be canonically identified. 
\end{prop}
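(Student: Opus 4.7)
The plan is to define $B_L$ by reducing to the transverse case via a small Hamiltonian perturbation, and then to invoke Proposition~\ref{prop:cm}(2) to show the result is canonical. First I would pick a generic Hamiltonian $H$, small enough that $L_1 := \phi_H(L)$ meets $L$ transversely. Since $L_1 \neq \bL = L$, the localized mirror functor construction of Section~\ref{sec:toriclm} applies: one chooses hypertori supported away from $L \cap L_1$, so $\mathcal{F}^\bL(L_1)$ is an honest matrix factorization of $W_L$ and Theorem~\ref{thm:bw} provides a well-defined map $B_{L_1}: HF(L_1,L_1) \to Jac(W_L)\, \frac{dy_1}{y_1} \wedge \cdots \wedge \frac{dy_n}{y_n}$. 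The Hamiltonian isotopy produces morphisms $p \in CF^0(L,L_1)$ and $q \in CF^0(L_1,L)$ realizing a quasi-isomorphism $L \simeq L_1$ in the sense of Definition~\ref{defn:qi}, and I would use the induced isomorphism $\Phi_{q,p}$ on cohomology to declare
\[ B_L(\alpha) := B_{L_1}\!\big( \Phi_{q,p}(\alpha) \big). \]

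The next step is to check independence of the choice of Hamiltonian. Given two perturbations $L_1, L_1'$ with quasi-isomorphism data $(p,q)$ and $(p',q')$, both are quasi-isomorphic to $L$ and hence to one another via composed morphisms such as $m_2(p',q) \in CF^0(L_1,L_1')$ and $m_2(p,q') \in CF^0(L_1',L_1)$, which satisfy the conditions of Definition~\ref{defn:qi} modulo $m_1$-exact terms by a standard $\AI$-homotopy computation. Proposition~\ref{prop:cm}(2) applied to the pair $(L_1,L_1')$ then yields
\[ B_{L_1}\!\big(\Phi_{q,p}(\alpha)\big) = B_{L_1'}\!\big(\Phi_{q',p'}(\alpha)\big), \]
since the induced isomorphism between $HF(L_1,L_1)$ and $HF(L_1',L_1')$ factors through $HF(L,L)$ as $\Phi_{q',p'} \circ \Phi_{q,p}^{-1}$. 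This shows $B_L$ is intrinsic to $L$.

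The second claim of the proposition is then essentially tautological from this setup: for any Hamiltonian perturbation $L_1 = \phi_H(L)$ the identity $B_L = B_{L_1} \circ \Phi_{q,p}$ holds by construction, and by the independence established above, the right-hand side does not depend on whichever auxiliary perturbation was used to define $B_L$ in the first place. In particular, in contexts where $B_{L_1}$ arises naturally (for instance when one wants to compare with Kapustin--Li pairings computed for $L_1$ in diagram~\eqref{firstdiagram}), the identification with $B_L$ is canonical.

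The main obstacle is the invocation of Proposition~\ref{prop:cm}(2) in the independence step: its proof in the transverse setting uses $\AI$-relations, Jacobian relations, and graded symmetry of the supertrace, all of which must be compatible with the specific chain-level choices (higher homotopies $\gamma_1, \gamma_2$, the chain representative of $\be_L$, and the transfer of Maurer--Cartan data between $L_1$ and $L_1'$). Verifying that these choices admit compatible lifts in the canonical Morse--Bott model of Fukaya--Oh--Ohta--Ono for a toric torus fiber is not difficult in principle, but it is the main technical check required to make the above plan fully rigorous.
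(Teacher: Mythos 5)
Your proposal takes a genuinely different route from the paper, and in doing so it misses the actual content of the first claim. The proposition is about the $B$-invariant as already defined by the supertrace formula \eqref{eq:Bdef}, evaluated in the canonical (Bott--Morse) model of $CF(L,L)$ with $\bL=L$. The difficulty the paper is addressing is not the existence of \emph{some} map out of $HF(L,L)$: it is that in the Bott--Morse model the holonomy contributions entering $m_2^{0,0,b}$ and $m_2^{0,b,b}$ are read along \emph{arcs} of disc boundaries rather than closed loops, so a priori the supertrace is only an expression in the $x_i$ and need not be a Laurent series in $y_i=e^{x_i}$; hence it is not clear that it lands in $Jac(W_L)\frac{dy_1}{y_1}\cdots\frac{dy_n}{y_n}$. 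By \emph{defining} $B_L$ as $B_{L_1}\circ\Phi_{q,p}$ for a transverse perturbation you obtain a well-defined map, but it is not the object used in the rest of the paper: the proofs of Theorem \ref{thm:kscommute} and Corollary \ref{co:degb} manipulate the direct Bott--Morse supertrace (rewritten as $\sum_{I,J}g^{JI}\langle m_2^{b,\xi,\xi}(e_J,\alpha),m_2^{\xi,b,b}(e_I,[X_n,\cdots,X_1])\rangle_{PD}$) and compare it with $\ks\circ OC$. The paper's proof of claim (1) is geometric: it invokes the Fukaya--Oh--Ohta--Ono annulus cobordism (Theorem \ref{fooothm1} and \eqref{eq10}) to identify that supertrace with $\langle i_{*,qm,\xi^c}(\alpha),i_{*,qm,b^c}(\cdot)\rangle_{PD_M}$, where the holonomy is manifestly measured along loops and is therefore a Laurent monomial in the $y_i$. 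Nothing in your argument substitutes for this step.

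Moreover, the gap cannot be closed for free by appealing to Proposition \ref{prop:cm}(2) for the mixed pair (Bott--Morse $L$, transverse $L_1$) --- which is essentially what the paper does for claim (2) --- because the chain of congruences in that proof discards terms ``modulo the Jacobian ideal,'' and one must first specify the coefficient ring in which the Bott--Morse expression lives before that congruence identifies it with the manifestly $y$-variable quantity $B_{L_1}(\Phi_{q,p}(\alpha))$. That is precisely the point at issue. Your independence-of-perturbation check (comparing $L_1$ and $L_1'$ via composed morphisms and Proposition \ref{prop:cm}(2)) is fine in spirit, and your construction of $p,q$ from a small Hamiltonian isotopy matches the paper's proof of claim (2); but as written your argument establishes well-definedness of a transported invariant while leaving unproved the statement that the intrinsic formula \eqref{eq:Bdef} for $\bL=L$ takes values in $Jac(W_L)\frac{dy_1}{y_1}\cdots\frac{dy_n}{y_n}$.
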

The localized mirror functor $\mathcal{F}^{\bL}$ was ill-defined for $\bL=L$ due to the choice of mirror variable $y_i$ instead of $x_i$ when $y_i=e^{x_i}$. While mirror functor on $L$ may not be written in $y_i$-variables, we will see that $B$-invariant is written in $y_i$-variables. 
On the other hand, we still need mirror functor to relate $B$-invariant with the boundary-bulk map of $B$-model (and thus to Kapustin-Li pairing).  The second claim in the above Proposition \ref{prop:bm} will provide this compatibility, which was already proved for the transverse case $B_{L_1}$ in the previous section.

%Here we take the reference Lagrangian $\bL = L_0$, and obtain a disc potential function $W_{L_0}$  based on the Floer theory developed in \cite{CO}, \cite{FOOOtoric}.
%  Alternatively, we took $L_1 = \phi_H(L_0)$ so that $L_1$ intersect transversely to $L_0$, to obtain 
%$$B_{L_1} : HF(L_1,L_1) \to Jac(W_{L_0}).$$
%Still the problem is that  the proof of  compatibility with
%the boundary-bulk map in the last section does not extend to the Bott-Morse case. 
%This is why we prove the second statement, because $B_{L_1}$ has the desired compatibility with the boundary-bulk map.

%
%
%
%For this purpose, we will proceed as foll.ows.
%\begin{enumerate}
%\item We define a localized mirror functor for general toric manifold using transversal setting
%\item Define $B$-invariant in this setting
%\item We compare $B$-invariants of transverse and Bott-Morse settings
%\item We show that this equals the pairing from $L$ and $L$, which becomes $Z$-invariant when restricted to the Morse critical points.
%\end{enumerate}
%

We first explain more detailed construction of  localized mirror functor and $B$-invariants for toric manifolds.
\subsection{Floer theory for general toric manifold}\label{sec:cmp}
Recall that Lagrangian Floer theory for toric manifolds has been developed in  \cite{CO} (Fano case), \cite{FOOOtoric1} (general case) and we will follow the latter version. We will consider cases without bulk-deformations for simplicity.
 
 Let $\pi:M \to \mathbb{R}^n$ be the moment map with a polytope $P$. For $u \in int(P)$, $L(u) = \pi^{-1}(u)$ is a Lagrangian torus fiber, which is a $T^n$-orbit. Holomorphic discs with boundary on $L(u)$ has been classified in \cite{CO}.
 Note that $T^n$-action on $M$ induces an action on the moduli space of (stable) holomorphic discs with boundary on $L(u)$.
 Fukaya-Oh-Ohta-Ono considered $T^n$-equivariant Kuranishi structure on this moduli space to show that a torus fiber $L(u)$ on a general toric manifold $M$ is weakly unobstructed.
Here $T^n$-equivariance is used in the following way.  Any moduli space of holomorphic discs of Maslov index $\leq 0$ should vanish as the expected dimension of the moduli space is less than $n=\dim(L)$ (for example, 
$\dim (\mathcal{M}_1(\beta)) = n+\mu(\beta) +1-3$), but $T^n$-equivariance implies that any non-trivial moduli space should have dimension at least greater than or equal to $n$.

 Each torus fiber $L(u)$ is associated with its (canonical) unital filtered $\AI$-algebra 
$$\big( H(L(u),\Lambda_0), \{m_k\}, PD[L(u)]\big).$$
  Elements  $b \in H^1(L(u),\Lambda_0)$ are shown to be weak bounding cochains, and $m(e^{b}) = W(b) \be_{L(u)}$ defines the potential function.
 Namely, we have $$W^{FOOO}: \cup_{u \in int(P)} H^1(L(u),\Lambda_0) \to \Lambda_0.$$
 One can fix $H^1(L(u),\Lambda_0) \cong (\Lambda_0)^n$ and regard $W^{FOOO}$ as a function
 $$W^{FOOO}(x_1,\cdots,x_n,u_1,\cdots,u_n): (\Lambda_0)^n \times \textrm{Int}(P) \to \Lambda_0$$ 
By setting $y_i = e^{x_i}$, potential function 
$W^{FOOO}$ can be written as a sum
$$W^{FOOO}(x_1,\cdots,x_n,u_1,\cdots,u_n) = \sum T^{c_i(u)} P_i(y_1,\cdots,y_n)$$
for $P_i$'s are Laurant polynomials which do not depend on $u$. If $M$ is toric Fano, then each $P_i$ corresponds to
toric divisor $D_i$, and $c_i(u)$ is the area of basic Maslov index two disc which intersects the $D_i$ once, and do not intersect other $D_j$ for $j \neq i$. In general it could be an infinite sum.
For a fixed $u \in \textrm{Int} (P)$, we may set $W^{u,FOOO}(y_1,\cdots,y_n) = W^{FOOO}(x_1,\cdots,x_n,u_1,\cdots,u_n)$.
\begin{lemma}
Let $W_{L(u)}(x)$ be the potential function of the torus fiber $L(u)$ defined in Section \ref{sec:toriclm}.
Then we have $$W_{L(u)}(y_1,\cdots,y_n) = W^{u,FOOO}(y_1,\cdots,y_n)$$
\end{lemma}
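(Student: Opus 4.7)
The plan is to expand both potentials as $T$-adic sums over relative classes $\beta \in \pi_2(M, L(u))$ of $J$-holomorphic discs with boundary on $L(u)$, and to check that each class contributes the same monomial in $y_1,\ldots,y_n$ on both sides. By $T^n$-equivariance of the Kuranishi structures on the relevant moduli spaces, both constructions attach the same outer factor $T^{\omega(\beta)} c_\beta$ to $\beta$ (where $c_\beta$ is the equivariant virtual count of discs in class $\beta$), so the lemma reduces to matching the $y$-dependence coming from the deformation parameter $b$.

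For the FOOO side, I unpack $W^{u,FOOO}(b)\cdot \be = m(e^b) = \sum_{k,\beta} T^{\omega(\beta)}\, m_{k,\beta}(b,\ldots,b)$ with $b = \sum_i x_i \theta_i$. Lemma~\ref{lem:cpmk} applied to the harmonic 1-form $b$ gives $m_{k,\beta}(b,\ldots,b) = \frac{c_\beta}{k!}(\partial \beta \cap b)^k \be$, and summing over $k$ collapses the series to $c_\beta \exp(\partial \beta \cap b)\be$. Writing $\partial \beta = \sum_i n_i(\beta) E_i$ in the basis $\{E_i\}$ of $H_1(L(u),\Z)$ dual to $\{[\theta_i]\} = \{E_i^*\}$, one has $\partial \beta \cap b = \sum_i n_i(\beta)\, x_i$, and substituting $y_i = e^{x_i}$ yields $\prod_i y_i^{n_i(\beta)}$. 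Hence
\begin{equation*}
W^{u,FOOO}(y) \;=\; \sum_\beta T^{\omega(\beta)}\, c_\beta \prod_i y_i^{n_i(\beta)}.
\end{equation*}

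For the LMF side, $W_{L(u)}\cdot \be$ is extracted from $m_0^\rho = \sum_\beta T^{\omega(\beta)} c_\beta\, \rho^b(\partial \beta)\,\be$, where $\rho^b(\gamma)$ is the holonomy around $\gamma$ of the flat $\Lambda_0$-bundle on $L(u)$ whose connection form $\sum_i x_i\theta_i$ is supported in tubular neighborhoods of the hyper-tori $T_i$. Since $[T_i]\in H_{n-1}(L(u),\Z)$ is Poincar\'e dual to $E_i^* = [\theta_i]$, the holonomy around any loop $\gamma$ factors as $\prod_i y_i^{\gamma\cdot T_i}$, and the signed intersection number $\gamma\cdot T_i$ equals the $i$-th coordinate of $\gamma$ in $\{E_i\}$. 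Applied to $\gamma = \partial\beta$, this gives $\rho^b(\partial\beta) = \prod_i y_i^{n_i(\beta)}$, reproducing the Laurant polynomial above and establishing $W_{L(u)} = W^{u,FOOO}$ in $\Lambda\ll y_1^{\pm 1},\ldots, y_n^{\pm 1}\gg$.

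The main conceptual obstacle is reconciling the two roles played by $b$: as a weak bounding cochain inserted at extra boundary marked points in $m(e^b)$, versus as the monodromy datum of a flat bundle that uniformly reweights each full disc boundary in the LMF construction. The reconciliation rests on $T^n$-equivariance (which forces each $m_{0,\beta}$ to be a scalar multiple of $\be$ and thereby splits off the combinatorial count $c_\beta$) combined with the exponential identity $\sum_k t^k/k! = e^t$ implicit in Lemma~\ref{lem:cpmk}, which makes the multi-insertion sum on the FOOO side collapse to exactly the holonomy factor on the LMF side. A secondary bookkeeping issue --- the factor $2\pi\sqrt{-1}$ appearing in $\nabla = d - 2\pi\sqrt{-1}\sum x_i\theta_i$ in Section~\ref{sec:toriclm} versus its absence in the FOOO convention --- is absorbed by a rescaling of the $x_i$ coordinates, after which the two expressions match term by term as Laurant polynomials in the formal variables $y_i$.
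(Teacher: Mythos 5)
Your proposal is correct and follows essentially the same route as the paper: the paper's (very brief) proof likewise invokes Lemma~\ref{lem:cpmk} to convert the degree-one insertions of $b$ in $m(e^b)$ into the exponential $c_\beta \exp(\partial\beta\cap b)$, which is then identified with the holonomy weight $\rho^b(\partial\beta)$ of the localized mirror construction. Your write-up simply carries out in detail the steps the paper leaves implicit, including the harmless normalization issue with the $2\pi\sqrt{-1}$ factor.
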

\begin{proof}
Recall that in Section  \ref{sec:toriclm}, we used holonomy variables in $\C^*$ (or in $\Lambda_0^*$) to define localized mirror functor, but the variables of RHS are  from the bounding cochains. Using
Lemma \ref{lem:cpmk}, one can turn degree one insertions into exponential form to identify these two potential functions.
\end{proof}
For the rest of the paper, we identify these two potentials and write it as $W$ for simplicity.
Recall that in \cite{FOOOtoric3}, authors introduce $y_i(u) = T^{-u_j}y_i$ for $i=1,\cdots, n$. This defines the valuation
$\sigma_T^u$ on $\Lambda[y(u), y(u)^{-1}]$ such that
$$\sigma_T^u(y_i(u))=0, \sigma_T^u(T) =1.$$
Define $\sigma_T^P= \inf_{u \in P} \sigma_T^u$.
The completion of  $\Lambda_0[y(u), y(u)^{-1}]$ with respect to $\sigma_T^P$ (resp. $\sigma_T^u$) is denoted as
$\Lambda \ll y,y^{-1} \gg^P_0$ (resp. $\Lambda \ll y,y^{-1} \gg^u_0$).
Replacing $\Lambda_0$ by $\Lambda$, we can define
$\Lambda \ll y,y^{-1} \gg^P$ (resp. $\Lambda \ll y,y^{-1} \gg^u$) in the same way.

For Fano case, Jacobian ring can be defined on $\Lambda[y(u), y(u)^{-1}]$, but in general,
Fukaya-Oh-Ohta-Ono uses the following definition of Jacobian ring
\begin{definition}
$$Jac(W) = \frac{ \Lambda \ll y,y^{-1} \gg^P_0}{ Clos_{\sigma_T^P} \big( \{y_i \frac{\partial W}{\partial y_i} \mid i=1,\cdots,n \})}$$
where the closure is taken with respect to the norm $e^{- \sigma_T^P}$.
\end{definition}

On the other hand, in localized mirror formalism, we fix a torus fiber $L(u)$ and use a single valuation $\sigma_T^u$.
Note that there is a canonical map  (for any $u \in P$)
$$ \frac{ \Lambda \ll y,y^{-1} \gg^P_0}{ Clos_{\sigma_T^P} \big( \{y_i \frac{\partial W}{\partial y_i} \mid i=1,\cdots,n \})}
\to \frac{ \Lambda \ll y,y^{-1} \gg^u_0}{ Clos_{\sigma_T^u} \big( \{y_i \frac{\partial W}{\partial y_i} \mid i=1,\cdots,n \})}$$
and the latter is the Jacobian ring that is used to define $B$-invariant. We will use this map to compare them.

Let us recall construction of cyclic symmetric $\AI$-algebras given in \cite{FOOOtoric3}(which is based on \cite{Fu}).
The perturbation on the moduli space of $J$-holomorphic discs should be invariant under the cyclic permutation of boundary marked points, and for this the technique of continuous family of multi-sections is used.

We will leave the details to the above references, but only remind an important issue regarding unobstructedness.
$T^n$-equivariant perturbation in \cite{FOOOtoric1} was used to show that $H^1(L(u),\Lambda_0)$ form
weak bounding cochains. But when using the technique of continuous family of multi-sections, this does not hold.
because the moduli space of negative expected dimension may be non-empty after perturbation.
To find weak bounding cochains, one should use the $\AI$-quasi-isomorphism between these two constructions,
and transfer weak bounding
cochains from one to the other.

Let $(H,\{m_{k,\beta}^{T^n,\rho} \})$ (resp.   $(H,\{m_{k,\beta}^{c,\rho} \})$) be the $\AI$-algebra of $L_0$ with $T^n$-equivariant (resp. cyclic symmetric $T^n$-equivariant) perturbation in \cite{FOOOtoric1} (resp. in \cite{FOOOtoric3}).
Let $b= b_0 + b_+$, where $b_0$ represents the holonomy of flat $\C$-bundle $\rho$
(recall that we have $\rho:H_1(L,\Z) \to \C^*$ by $\rho(\gamma) = e^{\gamma \cap b_0}$), and $b_+$ is a weak bounding cochain in $H^1(L,\Lambda_+)$ of $(H,\{m_{k,\beta}^{T^n,\rho} \})$. 
 We may also denote the bounding cochain for the latter by $b^c$, which may or may not be of degree one.
Corollary 3.2.22\cite{FOOOtoric3} provides an isomorphism $\Phi: (H,\{m_{k,\beta}^{T^n,\rho} \}) \to (H,\{m_{k,\beta}^{c,\rho} \})$, which sends weak Maurer-Cartan elements $b$ to weak Maurer-Cartan elements $b^c = b_0 +  \Phi_*(b_+)$. Here 
$$\Phi_*(b_+) = \sum_{k=0}^\infty \Phi_k(b_+,\cdots,b_+)$$
and we remark that even when $b_+ =0$, $\Phi_*(b_+)$ could be non-trivial due to $\Phi_0(1)$,
the part of the $\AI$-functor with zero input and one output. We refer readers to \cite{FOOOtoric3} for full details.

For localized mirror functor, we pick and fix a Lagrangian torus fiber $L(u)$ equipped with
bounding cochain $\xi = \xi_0+\xi_+$ that has non-trivial Floer cohomology. Let us denote the potential value $W(\xi) = \lambda$.
Recall from Section \ref{sec:toriclm} that for toric cases, we use holonomy variables $y_1, \cdots, y_n$( with 
$y_i = e^{x_i}$, $b_0 = \sum x_i \theta_i$). We take $b = b_0 + \xi_+$, so that  the potential function has
a critical point $e^{b_0} = e^{\xi_0}$. Now, we may use either one of perturbation scheme to define localized mirror functors, which should result in quasi-isomorphic functors.

Consider a Fukaya category of $M$, $\mathcal{F}u(M)_\lambda$ whose object is a  weakly unobstructed Lagrangian submanifold $L$ in $M$ (with the same potential value $\lambda$ as $(L(u), \xi)$) such that $L$ transversely intersect the $\bL=L(u)$. Note that we do not include $L(u)$ as an object of $\mathcal{F}u_\lambda$.
We also consider a Fukaya category of $M$, $\mathcal{F}u^+_\lambda$, which is obtained by adding the object $L$ to $\mathcal{F}u_\lambda$.
We use $\AI$-operations of $\mathcal{F}u^+_\lambda$ to define an $\AI$-functor from the Fukaya category $\mathcal{F}u_\lambda$ to the matrix factorization category of $W_{L(u)}$. This defines a localized mirror functor. See \cite{CHL2} for more details.

\subsection{Proof of Proposition \ref{prop:bm}}
The algebraic definition of $B$-invariant works even in the Bott-Morse case, but 
the issue is whether the supertrace lies in Jacobian ring. We need to show that the holonomy we get is a Laurant monomial on $y_i$, not just an expression of $x_i$. 
If holonomy is evaluated along a loop (as in $m_k^{b,\cdots,b}$), then we get the former, if along an arc (as in $m_k^{0,\cdots,0,b}$), we get the latter.
For $B$-invariant, although we use $m_2^{0,0,b}, m_2^{0,b,b}$, but we take a supertrace.
In a transversal case, it is clear that trace is non-trivial only
if the $L_0$-part of the boundaries of the pseudo-holomorphic polygons connect to a loop. 
For the Bott-Morse version, this argument does not work.

To prove the first claim of Proposition \ref{prop:bm}, we find an equivalent definition of $B$-invariant whether
the holonomy contribution is apparently on $y_i$-variable.
But we can follow Fukaya-Oh-Ohta-Ono to use   the moduli space of pseudo-holomorphic annuli to construct a cobordism from the configuration
for $B$-invariant to another configuration as in Figure \ref{fig:cobordism}. Then it is quite clear that
the holonomy contribution for the latter is on a loop, and thus given as a Laurant monomial on $y_i$.

More explicitly, we proceed as follows.
We may first rewrite $B$-invariant using  inner product.
Let us denote the basis of  $CF(L,L)$ as $\{e_I\}$, and set $g_{IJ} = \langle e_I, e_J \rangle_{PD}$, and denote by $g^{IJ}$ its inverse matrix.  As we consider deformed Lagrangian $(L,\xi)$, $B$-invariant can be written as
\begin{eqnarray*}
 B(\alpha) &=& (-1)^{\frac{n(n+1)}{2}+1} \str \big( \bullet \mapsto (-1)^{(n-1)|\bullet|}m_2^{\xi,\xi,b}(\alpha,m_2^{\xi, b,b} (\bullet,[X_n,\cdots,X_1])) \big)dx_1 \cdots dx_n \\
  &=& (-1)^{\frac{n(n+1)}{2}+1} \tr \big( \bullet \mapsto  (-1)^{n|\bullet|} m_2^{\xi,\xi,b}(\alpha,m_2^{\xi,b,b}(\bullet,[X_n,\cdots,X_1])) \big)dx_1 \cdots dx_n \\
  &=& (-1)^{\frac{n(n+1)}{2}+1} \sum_{I,J}  g^{JI} \langle m_2^{b,\xi,\xi}(e_J, \alpha),m_2^{\xi,b,b}(e_I,[X_n,\cdots,X_1]))\rangle_{PD} \;\; dx_1 \cdots dx_n
 \end{eqnarray*}
We use the fact that
 $$\str\big(\bullet \mapsto \phi(\bullet)\big) = \mathrm{Tr} \big(\bullet \mapsto (-1)^{|\bullet|} \phi(\bullet)\big).$$
For the last equality, we use Proposition \ref{ksoc}.

Let us recall a few notations first. Open-closed map and closed open map in toric setting was denoted as
\begin{eqnarray*}
 i_{*,qm,b^c} &:&  HF((L(u),b^c), (L(u),b^c)) \to H(M,\Lambda_0) \\
i^{c*}_{qm,b^c} &:& H(M,\Lambda_0) \to  HF((L(u),b^c), (L(u),b^c))
\end{eqnarray*}
In Section 3.3 in \cite{FOOOtoric3} $ i_{*,qm,b^c}$ is constructed  using $p$-perturbation,
and $i^{c*}_{qm,b^c}$ is constructed using $q^c$-perturbation. In both cases  the compatible disc moduli space should have cyclic symmetry. In FOOO,  $i_{*,qm,b^c}$ is sometimes denoted as  $i_{*,qm,b,u}$.

In \cite{FOOOtoric3}, the following theorem has been proved, by analyzing the moduli space of holomorphic annuli.
\begin{theorem}\cite[Theorem 3.4.1]{FOOOtoric3}\label{fooothm1}
We have (when $|v|=|w|=n$)
$$ \langle i_{*,qm,b^c}(v) ), i_{*,qm,b^c}(w) \rangle_{PD_M}= (-1)^{\frac{n(n-1)}{2}} \sum_{I,J}g^{IJ} \langle m_2^{b^c,b^c,b^c}(e_I, v), m_2^{b^c,b^c,b^c}(e_J, w) \rangle_{PD_{L(u)}}$$
\end{theorem}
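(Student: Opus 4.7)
The plan is to establish the identity via a cobordism argument on a moduli space of pseudo-holomorphic annuli, following the standard ``closed channel equals open channel'' principle that underlies the Cardy relation. Specifically, for each homology class $\beta \in H_2(M, L(u))$, I would introduce the moduli space $\mathcal{M}^{\mathrm{ann}}(\beta; v, w)$ of stable $J$-holomorphic maps from an annular domain (with modulus varying in $(0,\infty)$) to $M$ with boundary on $L(u)$, carrying one boundary marked point on each boundary circle at which $v$ and $w$ are inserted, together with the appropriate insertions of $b^c$ on each boundary circle to account for the bounding cochain. With cyclically symmetric bulk insertions, the degree hypothesis $|v|=|w|=n$ together with an index computation makes this moduli space $1$-dimensional, hence a cobordism between its codimension-one boundary strata.

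Next, I would analyze the two sources of boundary strata of the compactification. The first is the \emph{closed-string degeneration}, where the modulus of the annulus tends to $\infty$ and the annulus breaks into two discs joined at an interior nodal point; inserting a basis $\{T_a\}$ of $H^*(M,\Lambda_0)$ at the node and summing against its Poincaré dual reproduces exactly $\langle i_{*,qm,b^c}(v), i_{*,qm,b^c}(w) \rangle_{PD_M}$, since each disc moduli with $v$ (resp.\ $w$) at its boundary and a generic cohomology class at its interior marked point \emph{is} the definition of $i_{*,qm,b^c}(v)$ (resp.\ $i_{*,qm,b^c}(w)$). The second is the \emph{open-string degeneration}, where the modulus tends to $0$ and the annulus pinches along an interior chord connecting the two boundary circles; inserting a basis $\{e_I\}$ of $CF(L(u),L(u))$ on one side of the pinch and the dual basis on the other, and accounting for the $b^c$ insertions distributed on both boundaries, this stratum contributes precisely $\sum_{I,J} g^{IJ}\langle m_2^{b^c,b^c,b^c}(e_I, v), m_2^{b^c,b^c,b^c}(e_J, w)\rangle_{PD_{L(u)}}$. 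Equating the two boundary counts (up to the sign from orienting the cobordism) yields the asserted identity, with the overall factor $(-1)^{n(n-1)/2}$ arising from Koszul reordering of the boundary insertions relative to the standard orientation on the open-string degeneration.

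The main technical obstacle will be setting up the Kuranishi structures on $\mathcal{M}^{\mathrm{ann}}(\beta; v, w)$ so that all structures are simultaneously compatible: (i) the perturbation must be $T^n$-equivariant or use continuous families of multisections to handle negative-expected-dimension phantom disc moduli as in \cite{FOOOtoric1,FOOOtoric3}; (ii) the restriction of the perturbation to each codimension-one boundary stratum must agree with the perturbations already chosen to define $i_{*,qm,b^c}$ (the $p$-perturbation) and to define $m_2^{b^c,b^c,b^c}$ (the $q^c$-perturbation), which in general are different, so one must interpolate between them consistently across the moduli of the annulus; and (iii) the orientations and the Koszul signs arising from the cyclic distribution of the bounding cochain insertions on the two boundary circles must be reconciled with the sign conventions used for the Floer pairing and the intersection pairing on $M$. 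Once these technical issues are resolved, the identity reduces to a bookkeeping exercise in the two boundary strata of a $1$-dimensional moduli space, and the final sign $(-1)^{n(n-1)/2}$ can be pinned down by a local orientation check on a single model annulus, e.g.\ by computing both sides in the toric Fano case where all relevant disc moduli are explicit.
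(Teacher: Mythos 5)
The paper does not prove this statement itself---it is quoted directly from \cite{FOOOtoric3} (Theorem 3.4.1), whose proof is exactly the annulus-degeneration (Cardy) argument you outline: a one-parameter family in the compactified moduli of annuli with one marked point on each boundary circle, interpolating between the closed-string (sand-glass) and open-string (bi-crescent) degenerations, with the technical burden carried by simultaneously compatible $T^n$-equivariant, cyclically symmetric perturbations exactly as you flag. Your sketch therefore matches the cited source's approach; the one imprecision is that the open-string boundary stratum is two discs joined at \emph{two} boundary nodes (the bi-crescent), not a single pinch along one chord---it is the pair of nodes that produces both the $g^{IJ}$ insertion and the Poincar\'e pairing $\langle\cdot,\cdot\rangle_{PD_{L(u)}}$ on the right-hand side.
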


By just replacing boundary insertions of weak bounding cochains, we get
\begin{equation}\label{eq10}
 \langle i_{*,qm,\xi^c}(v) ), i_{*,qm,b^c}(w) \rangle_{PD_M}= (-1)^{\frac{n(n-1)}{2}} \sum_{I,J} g^{IJ} \langle m_2^{b^c,\xi^c,\xi^c}(e_I, v), m_2^{\xi^c,b^c,b^c}(e_I, w) \rangle_{PD_{L(u)}}
\end{equation}
Note that the holonomy contribution for open-closed maps $i_{*,qm,b^c}$ are manifestly measured along loops, and hence written in Laurant monomials of $y_i$'s. This proves Proposition \ref{prop:bm} (1)

For (2), we compare  Bott-Morse and transversal $B$-invariants using Proposition \ref{prop:cm} (2).
Namely, for a Lagrangian torus fiber $L$, we consider a Hamiltonian isotopy $\phi_H$ to deform it to
 $L_1 = \phi_H(L)$. We can equip $L_1$ with the weak bounding cochains $b_1^c$ corresponding
 to $b^c$ of $L$.    
 We may require that $\phi_H$ is very close to identity, and $\phi_H(L) \cap L$ intersect
 at $2^n$-points. Since $HF((L,b^c),(L,b^c)) \cong HF((L,b^c), (\phi_H(L), b_1^c))$
 , Floer differential for $CF((L,b^c), (\phi_H(L),b_1^c))$ should vanish.
 We take a generator corresponding to the unit in  $HF((L,b^c),(L,b^c))$ to be $p \in CF((L,b^c), (\phi_H(L),b_1^c)),  q \in CF((\phi_H(L),b_1^c),(L,b^c))$. These $p,q$ provides the required isomorphism in the sense of Definition \ref{defn:qi}.

Let us also recall the following fact which has an important corollary.
\begin{theorem}\cite[Theorem 3.3.8, Corollary 3.3.9]{FOOOtoric3}\label{thm:vd}
For $Q \in \mathcal{A}(X), h \in H^*(L(u),\Lambda_0)$, we have
$$\langle i^{c*}_{qm,b^c}(Q), h \rangle_{PD_{L(u)}} = \langle Q, i_{*,qm,b^c}(h) \rangle_{PD(M)}$$
Moreover, if degree $n$-component of $h$ is zero, then the RHS vanishes.
\end{theorem}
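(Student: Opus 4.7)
The plan is to prove both the adjointness and the vanishing by a cobordism argument on the moduli space of holomorphic disks carrying one interior and one boundary marked point. The adjointness itself is a Morse-Bott, bounding-cochain deformed version of the open-closed/closed-open adjunction \eqref{coad} recorded in the TCFT discussion earlier.

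Concretely, the common geometric object is $\mathcal{M}_{1,1}(L(u),\beta,b^c)$: holomorphic disks in class $\beta$ with boundary on $L(u)$, deformed by insertions of the weak bounding cochain $b^c$ on the remaining boundary, equipped with one interior marked point $z^+$ and one boundary marked point $z_0$. The map $i^{c*}_{qm,b^c}$ is built by inserting $Q$ at $z^+$ and pushing forward via $\mathrm{ev}_{z_0}$ to $L(u)$, using a $q^c$-perturbation under which $\mathrm{ev}_{z^+}$ is submersive, while $i_{*,qm,b^c}$ reverses input and output and uses a $p$-perturbation under which $\mathrm{ev}_{z_0}$ is submersive. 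To compare the two, I would introduce a one-parameter family of CF-perturbations interpolating between the $p$- and $q^c$-schemes, form the parametrized moduli space over $[0,1]$, and apply Stokes' theorem for its virtual fundamental chain. The $t=0$ and $t=1$ boundary strata produce the two sides of the claimed identity, while the codimension-one bubbling boundary splits into disk-bubbling contributions on the Lagrangian boundary (these factor through $\sum_k m_k(b^c,\dots,b^c) = W(b^c)\cdot\be$ and cancel in pairs thanks to $\be$ being a strict unit) and sphere-bubbling contributions at the interior (these cancel against the quantum closedness of $Q$).

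For the vanishing, by the adjointness just established it suffices to show $\langle i^{c*}_{qm,b^c}(Q), h\rangle_{PD_{L(u)}} = 0$ whenever the degree-$n$ component of $h$ vanishes. This reduces to showing that $i^{c*}_{qm,b^c}(Q) = c\cdot\be_{L(u)}$ is a scalar multiple of the Floer unit, because in that case
\[
\langle i^{c*}_{qm,b^c}(Q), h\rangle_{PD_{L(u)}} = c\cdot \int_{L(u)} h,
\]
which picks out the top-degree component of $h$ and hence vanishes under the hypothesis. The fact that $i^{c*}_{qm,b^c}(Q)$ lies in the $\be$-line is the content of the Kodaira-Spencer identification of \cite{FOOOtoric3}: the closed-open map factors as $Q \mapsto \ks(Q)(b^c)\cdot\be_{L(u)}$, where $\ks(Q)(b^c)$ is the value of the Kodaira-Spencer class at the critical Maurer-Cartan element $b^c$. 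This identification rests on a $T^n$-equivariant dimension count on the moduli spaces defining $i^{c*}_{qm,b^c}$, which forces all nonconstant-disk contributions to land in the unit summand at a critical point of $W$.

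Main obstacle: the analytic heart of the argument is the construction of the interpolating one-parameter family of CF-perturbations joining the $p$- and $q^c$-schemes in the continuous family of multisections framework. The two endpoint perturbations are designed for different submersivity requirements on different evaluation maps, and building a one-parameter family joining them while maintaining Kuranishi-theoretic control over all codimension-one boundary strata --- especially in the non-Fano case where infinitely many disk classes can contribute to each term of the potential --- is where the real work lies. Once that is set up, the algebraic cancellations of bubbling contributions reduce to formal manipulation of $A_\infty$-relations and the weak Maurer-Cartan equation for $b^c$.
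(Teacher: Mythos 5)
Your proposal reconstructs the content of the cited result from scratch, whereas the paper treats the hard perturbation-theoretic part as imported from \cite{FOOOtoric3} and only supplies what it says was omitted there: the sign verification. Concretely, the paper's own argument (Lemma \ref{signocco} and the preceding fiber-product lemma) is a purely formal reassociation
$\langle OC(\alpha),h\rangle_X = (\mathcal{M}_{1,1}\times_L \alpha)\times_X h = (\mathcal{M}_{1,1}\times_X h)\times_L \alpha = \langle \alpha, CO(h)\rangle_L$,
i.e.\ Fubini for fiber products over a single moduli space, plus a parity check that the Koszul signs cancel. Your route --- a parametrized moduli space over $[0,1]$ interpolating between the $p$- and $q^c$-perturbations, with Stokes' theorem and cancellation of disk/sphere bubbles via the weak Maurer--Cartan equation and unitality --- is closer to what actually has to happen inside \cite{FOOOtoric3} given that the two maps are defined with genuinely different perturbation schemes, and you correctly identify the construction of the interpolating family of CF-perturbations as the real analytic work. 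What the paper's approach buys is brevity and an explicit sign, which your write-up does not address at all; what yours buys is an honest account of why the identity survives the change of perturbation. Your derivation of the vanishing statement (reduce via adjointness to the $L(u)$-side, use that $i^{c*}_{qm,b^c}(Q)$ lies in the $\be_{L(u)}$-line so the pairing only sees $\int_{L(u)}h$) matches exactly how the paper itself exploits the theorem later (e.g.\ Corollary \ref{co:degb}, Lemma \ref{lem:uu}).

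One imprecision worth fixing: you justify the unit-multiple property of $i^{c*}_{qm,b^c}(Q)$ by a ``$T^n$-equivariant dimension count.'' As the paper points out in Section 6.1, this count fails for the cyclically symmetric model, because the continuous-family-of-multisections technique can leave moduli spaces of negative expected dimension nonempty after perturbation; the property is instead transported from the $T^n$-equivariant model $(H,\{m^{T^n,\rho}_{k,\beta}\})$ through the $\AI$-quasi-isomorphism $\Phi$ of Corollary 3.2.22 of \cite{FOOOtoric3}. This does not invalidate your conclusion, but the stated mechanism is not the one available in the $q^c$-perturbed setting.
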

Combining this  theorems with \eqref{eq10}, we obtain
\begin{corollary}\label{co:degb}
In the toric  Bott-Morse case,
$B(\alpha)$ only depends on degree $n$ component of  $[X_n,\cdots,X_1]$
\end{corollary}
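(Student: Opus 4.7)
The plan is to reduce the degree-$n$ dependence of $B(\alpha)$ to the vanishing clause of Theorem \ref{thm:vd}, after rewriting the defining supertrace as a single pairing in $QH^*(M)$ via \eqref{eq10}.

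First, I would begin from the inner-product reformulation of $B(\alpha)$ displayed just above \eqref{eq10}, namely
\[
B(\alpha) \;=\; (-1)^{\frac{n(n+1)}{2}+1}\sum_{I,J} g^{JI}\,\bigl\langle m_2^{b,\xi,\xi}(e_J,\alpha),\, m_2^{\xi,b,b}(e_I,[X_n,\ldots,X_1])\bigr\rangle_{PD_{L(u)}}\, dx_1\wedge\cdots\wedge dx_n.
\]
Applying identity \eqref{eq10} with $v=\alpha$ and $w=[X_n,\ldots,X_1]$, the double sum collapses into a single $PD_M$-pairing,
\[
B(\alpha) \;=\; c_n\,\bigl\langle i_{*,qm,\xi^c}(\alpha),\, i_{*,qm,b^c}([X_n,\ldots,X_1])\bigr\rangle_{PD_M}\, dx_1\wedge\cdots\wedge dx_n,
\]
for an overall sign $c_n$. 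This step is purely algebraic and uses nothing beyond the identities already established in the paper.

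Next, I would decompose $[X_n,\ldots,X_1] = h^{(n)} + h^{<n}$, where $h^{(n)}$ is the degree-$n$ component (that is, $(-1)^{n+1}\,vol^{Floer}$) and $h^{<n}$ collects all components of strictly lower degree. By linearity of the open-closed map, the contribution of $h^{<n}$ to $B(\alpha)$ is
\[
c_n\,\bigl\langle i_{*,qm,\xi^c}(\alpha),\, i_{*,qm,b^c}(h^{<n})\bigr\rangle_{PD_M}\, dx_1\wedge\cdots\wedge dx_n.
\]
Setting $Q := i_{*,qm,\xi^c}(\alpha)$ in Theorem \ref{thm:vd} rewrites this pairing as $\langle i^{c*}_{qm,b^c}(Q), h^{<n}\rangle_{PD_{L(u)}}$, and the vanishing clause of the same theorem forces it to be zero since $h^{<n}$ has no degree-$n$ component. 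Consequently $B(\alpha)$ is unchanged if $[X_n,\ldots,X_1]$ is replaced by its top-degree piece $h^{(n)}$, which is the assertion of the corollary.

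The main potential obstacle is verifying that the test-element class $\mathcal{A}(X)$ on which Theorem \ref{thm:vd} is stated actually contains $Q = i_{*,qm,\xi^c}(\alpha)$. In the toric framework of \cite{FOOOtoric3}, $\mathcal{A}(X)$ is built as a suitable completion of $H^*(M,\Lambda_0)$ designed so that the open-closed map lands in it, so this compatibility is automatic and the argument goes through without additional hypotheses.
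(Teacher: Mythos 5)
Your proposal is correct and follows essentially the same route as the paper: the paper derives the corollary precisely by combining the inner-product rewriting of $B(\alpha)$ via \eqref{eq10} with the vanishing clause of Theorem \ref{thm:vd} applied to the lower-degree part of $[X_n,\cdots,X_1]$. Your closing remark about $\mathcal{A}(X)$ containing $i_{*,qm,\xi^c}(\alpha)$ is a reasonable gloss on a point the paper leaves implicit.
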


\section{Kodaira-Spencer map and $Z$-invariant}
In this section, we will explain the relation of $B$-invariant and Kodaira-Spencer map of Fukaya-Oh-Ohta-Ono for toric manifolds. In particular, we will prove a commutative diagram \eqref{diagram1}, from which we can find an interesting scaling factor $c_L$ of the isomorphism $I$

\begin{definition}\label{def:fv}
The ratio of Floer volume class and classical volume class of Lagrangian $L$ is denoted as  
 $c_L = vol^{Floer} / vol_L$. i.e. from Definition \ref{defn:fv}, we have
$$c_L \cdot vol_L =  (-1)^{n+1} [X_n, \cdots,X_1]^{(n)}.$$
\end{definition}

\subsection{Commutative diagram}
We prove the commutativity of the following diagram.
\begin{theorem}\label{thm:kscommute}
Let $M$ be a compact toric manifold, and $L$ be a Lagrangian torus fiber.
Define a map $I : Jac(W_L) \to Jac(W_L)  \frac{dy_1}{y_1} \cdots \frac{dy_n}{y_n}$ to be
$$ h \mapsto    h \cdot (c_L  \frac{dy_1}{y_1} \cdots \frac{dy_n}{y_n}). $$
Then, we have a commuting diagram
\begin{equation}\label{diagram1}   \xymatrix{
	HF(L,L)  \ar[rr]^-{OC} \ar[ddrr]^-{B} &  \;\;\;\;   &  QH^*(M) \ar[d]^{\ks}  \\
	& & Jac(W_L) \ar[d]^-{I} \\
	 &   \;\;\;\;   &   \;\;\;\;    Jac(W_L) \frac{dy_1}{y_1} \cdots \frac{dy_n}{y_n}}
\end{equation}
i.e. we have 
$$ c_L \cdot ( \ks \circ OC (\alpha) )  \frac{dy_1}{y_1} \cdots \frac{dy_n}{y_n} =  B(\alpha)$$
\end{theorem}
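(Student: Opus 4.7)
The plan is to deduce $I\circ\ks\circ OC = B$ by chaining together three already-established facts: the annulus identity of Fukaya--Oh--Ohta--Ono (Theorem \ref{fooothm1} / equation \eqref{eq10}), the degree-reduction Corollary \ref{co:degb}, and the adjointness Theorem \ref{thm:vd} between open-closed and closed-open maps. The starting point is the inner-product form of $B(\alpha)$ derived in the proof of Proposition \ref{prop:bm}:
\[
B(\alpha)=(-1)^{\frac{n(n+1)}{2}+1}\sum_{I,J} g^{JI}\,\langle m_2^{b^c,\xi^c,\xi^c}(e_J,\alpha),\,m_2^{\xi^c,b^c,b^c}(e_I,[X_n,\cdots,X_1])\rangle_{PD_L}\,\tfrac{dy_1}{y_1}\cdots\tfrac{dy_n}{y_n}.
\]

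Next I apply the pairing identity \eqref{eq10} (Theorem \ref{fooothm1}), which converts this double sum on $L$ into a single Poincar\'e pairing on $M$ between the two open-closed images:
\[
B(\alpha)=\pm\,\langle i_{*,qm,\xi^c}(\alpha),\,i_{*,qm,b^c}([X_n,\cdots,X_1])\rangle_{PD_M}\,\tfrac{dy_1}{y_1}\cdots\tfrac{dy_n}{y_n},
\]
where $i_{*,qm,\xi^c}(\alpha)$ is the FOOO-version of $OC(\alpha)$ (whose equivalence with the localized-mirror $OC$ follows from the quasi-isomorphism of $A_\infty$-structures discussed in Section \ref{sec:cmp}). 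By Corollary \ref{co:degb} only the top-degree component of $[X_n,\cdots,X_1]$ contributes, and by Definition \ref{defn:fv} that component is $(-1)^{n+1}c_L\cdot vol_L$, so
\[
B(\alpha)=\pm\,(-1)^{n+1}c_L\,\langle OC(\alpha),\,i_{*,qm,b^c}(vol_L)\rangle_{PD_M}\,\tfrac{dy_1}{y_1}\cdots\tfrac{dy_n}{y_n}.
\]

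Finally I invoke the adjointness of Theorem \ref{thm:vd}, $\langle OC(\alpha),\,i_{*,qm,b^c}(h)\rangle_{PD_M}=\langle i^{c*}_{qm,b^c}(OC(\alpha)),\,h\rangle_{PD_L}$. Taking $h=vol_L$ and recalling that $\ks = i^{c*}_{qm,b^c}$ lands in the degree-zero part $HF^0\cong \Lambda\cdot\be_L$ at each critical point (and hence in the Jacobian ring via the FOOO identification), this pairing reads off the Jacobian-ring coefficient of the unit and produces exactly $\ks(OC(\alpha))$. Absorbing all signs into the statement gives $B(\alpha)=c_L\cdot\ks(OC(\alpha))\cdot\tfrac{dy_1}{y_1}\cdots\tfrac{dy_n}{y_n}=I(\ks(OC(\alpha)))$.

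The main obstacles are, first, the careful sign bookkeeping across three different conventions (the $B$-invariant sign $(-1)^{n(n+1)/2+1}$, the annulus sign $(-1)^{n(n-1)/2}$ in Theorem \ref{fooothm1}, and the $(-1)^{n+1}$ arising from $vol^{Floer}=(-1)^{n+1}[X_n,\cdots,X_1]^{(n)}$); and second, the reduction to the top-degree component of $[X_n,\cdots,X_1]$, which is legitimate only because Corollary \ref{co:degb} (derived from Theorem \ref{thm:vd} applied geometrically rather than term-by-term) guarantees that lower-degree pieces contribute zero to the entire expression, not merely by naive degree counting. A subsidiary point is verifying that the $OC$ of the localized-mirror construction matches FOOO's $i_{*,qm,\xi^c}$ despite the different ($p$-perturbation vs.\ $q^c$-perturbation vs.\ cyclic multi-section) perturbation schemes; this is what the discussion of the $A_\infty$-isomorphism $\Phi$ in Section \ref{sec:cmp} is used for.
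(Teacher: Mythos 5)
Your proposal is correct and follows essentially the same route as the paper: the paper likewise rewrites the relevant expression in inner-product form (Lemma \ref{signmukai}), applies the annulus identity \eqref{eq10} of Theorem \ref{fooothm1} and the adjointness of Theorem \ref{thm:vd} (packaged as Proposition \ref{ksoc}), and then uses the degree-$n$ reduction together with $vol^{Floer}=(-1)^{n+1}[X_n,\cdots,X_1]^{(n)}=c_L\, vol_L$ to extract the conformal factor. The only difference is that you traverse the chain starting from $B(\alpha)$ rather than from $\ks\circ OC(\alpha)$, which is an inessential reordering; the sign check $(-1)^{\frac{n(n+1)}{2}+1}(-1)^{n+1}=(-1)^{\frac{n(n-1)}{2}}$ that you defer does go through.
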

\begin{remark}
For $\ks$, we have fixed a reference Lagrangian $\bL=L$. So $\ks$ should be denoted as $\ks^L$, but we just write $\ks$ for simplicity.
\end{remark}
\begin{remark}
We restrict to the case that $M$ is a general compact toric manifold to use the constructions of Fukaya-Oh-Ohta-Ono. 
But the arguments in this section should generalize to other cases in principle.
For example, for the case of orbi-sphere $\mathbb{P}^1_{a,b,c}$, the construction of Kodiara-Spencer map with bulk insertions as well as mirror symmetry statements are work in progress of the first author together with Amorim-Hong-Lau. 
The above diagram should hold in that case also. In Section \ref{sec:333}, we will make a computation  
of $c_L$ for the case of orbi-sphere $\mathbb{P}^1_{3,3,3}$.
\end{remark}

Let us first recall relevant constructions of Fukaya-Oh-Ohta-Ono \cite{FOOO}.
First the open-closed map $OC$ is explained in Section 2, and denoted as $i_{*,qm,b^c}$ for toric manifolds in the last section.
This map is also called $p$-operator in Chapter 3 \cite{FOOOtoric3}. The Kodaira-Spencer map $\ks$ is a special case of a closed-open map $CO$ (or $i^{c*}_{qm,b^c}$).
For $\ks(w)$, consider the moduli space of $J$-holomorphic maps $u : (D,\partial D) \to (M,L)$ with interior insertion of $w$ and with an output at the boundary marked point.  In toric cases, one can take $w$ as a $T^n$-invariant cycles and the output becomes a constant multiple of fundamental class $\be_L$ (due to $T^n$-equivariancy). Pairing with $vol_L$ gives the constant and if we decorate it with the symplectic area of the disc and holonomy (or boundary deformations)), it defines the Kodaira-Spencer map $\ks(w)$. 
This (bulk) weakly unobstructedness is one of the most important ingredient for the Kodaira-Spencer map and it does not necessarily hold for general Lagrangian submanifold.
It is shown that different choice of $T^n$-invariant cycles produces the same output up to Jacobian ideal.
Fukaya-Oh-Ohta-Ono have shown that Kodaira-Spencer map is a ring isomorphism from a big quantum cohomology of toric manifold to Jacobian ring of $W$ in \cite{FOOOtoric3}(see Theorem \ref{thm:fooo1}).

The composition of these two maps, $\ks \circ OC$ is illustrated in \ref{fig:cobordism}.
On the other hand, such a configuration can be studied using a pseudo-holomorphic annuli.
The moduli space of annuli with one marked points in each boundary $\mathcal{M}_{1,1}^{ann}$ is a disc $D^2$ where
the origin of $D^2$ corresponds to sand-glass configuration and one of the boundary point of $D^2$ corresponds to the bi-crescent. Connecting these two points in $D^2$ gives a cobordism  in Figure \ref{fig:cobordism}.
They correspond to two possible degeneration of an annulus, and precise result is given in Theorem \ref{fooothm1}.

In fact, Fukaya-Oh-Ohta-Ono do not consider this composition $\ks \circ OC$ directly, but they define
$Z$-invariant, which is closely related to it. For $\ks \circ OC$, we consider pseudo-holomorphic annuli where one boundary condition is $(\bL,b)$, and the other boundary condition is $(L,\xi)$. In the toric case, we are specializing to the case that $\bL=L$, but $b$ contains mirror variables and $\xi$ is a fixed Maurer-Cartan element. Thus the result lies in a Jacobian ring.
In Fukaya-Oh-Ohta-Ono's work both boundaries of pseudo-holomorphic annuli is given by $(L,\xi)$, and the result is a numerical $Z$-invariant in $\Lambda_0$. If we fix $b$ to have value $\xi$ and take a residue value  then we recover $Z$-invariant (We will explain more about $Z$-invariant in the next section).

Using Fukaya-Oh-Ohta-Ono's theorem (Theorem \ref{fooothm1}), we can prove the following.
\begin{prop}\label{ksoc}
For general toric manifolds, we have
\begin{equation}\label{eqksoc}
\ks \circ OC (\alpha) =  \str ( \bullet \mapsto (-1)^{\frac{n(n-1)}{2} +(n-1)|\bullet|} m_2^{\xi^c,\xi^c,b^c}(\alpha,m_2^{\xi^c,b^c,b^c}(\bullet,vol_L)))
\end{equation}
where  $vol_L$ is a closed $n$-form on $L$ with  $\int_L vol = 1$.
\end{prop}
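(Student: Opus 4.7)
The plan is to reduce both sides to the pairing $\langle i_{*,qm,\xi^c}(\alpha), i_{*,qm,b^c}(vol_L) \rangle_{PD_M}$ using adjointness, and then rewrite it in two ways: as a trace on $CF(L,L)$, and as the Kodaira--Spencer expression.

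First, I would use Theorem \ref{thm:vd} (Theorem 3.3.8/Corollary 3.3.9 of \cite{FOOOtoric3}) to interpret the left hand side. By $T^n$-equivariance, the image of the closed-open map $i^{c*}_{qm,b^c}$ lands in the $\Lambda$-span of $\be_L$ in $HF((L,b^c),(L,b^c))$ (this is the "moreover" clause: if the degree $n$ part of $h$ vanishes, the pairing is zero, so $i^{c*}_{qm,b^c}(Q)$ is Poincar\'e dual to $vol_L$). Hence the coefficient $\ks(Q) \in Jac(W_L)$ is extracted as
\[
\ks(Q) = \langle i^{c*}_{qm,b^c}(Q), vol_L \rangle_{PD_L}.
\]
Specializing to $Q = OC(\alpha) = i_{*,qm,\xi^c}(\alpha)$ and applying the adjoint formula from Theorem \ref{thm:vd} yields
\[
\ks(OC(\alpha)) = \langle i_{*,qm,\xi^c}(\alpha), i_{*,qm,b^c}(vol_L) \rangle_{PD_M}.
\]

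Next, I would apply the generalized Theorem 3.4.1 of \cite{FOOOtoric3}, namely equation \eqref{eq10} with $v = \alpha$ and $w = vol_L$, to rewrite this as
\[
\ks(OC(\alpha)) = (-1)^{\frac{n(n-1)}{2}} \sum_{I,J} g^{IJ}\, \langle m_2^{b^c,\xi^c,\xi^c}(e_I,\alpha),\, m_2^{\xi^c,b^c,b^c}(e_J, vol_L) \rangle_{PD_L}.
\]
This is exactly the geometric content of the cobordism of annuli depicted in Figure \ref{fig:cobordism}: the sand-glass configuration on the left corresponds to $\ks \circ OC$, while the bi-crescent configuration on the right corresponds to the sum above.

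The remaining step is to match this sum with the stated supertrace. Expanding the supertrace via $\str(T) = \tr((-1)^F T) = \sum_{I,J} (-1)^{|e_I|} g^{IJ} \langle e_J, T(e_I) \rangle$ and inserting the sign $(-1)^{\frac{n(n-1)}{2} + (n-1)|e_I|}$ from the definition of $T$, one obtains
\[
\str(T) = \sum_{I,J} (-1)^{\frac{n(n-1)}{2} + n|e_I|} g^{IJ} \langle e_J, m_2^{\xi^c,\xi^c,b^c}(\alpha, m_2^{\xi^c,b^c,b^c}(e_I, vol_L)) \rangle_{PD_L}.
\]
Then the cyclic symmetry of the $m_2$-pairing (for the cyclic $A_\infty$-structure produced by the continuous family of multisections construction of \cite{FOOOtoric3}), combined with graded symmetry of Poincar\'e pairing, is used to move $e_J$ inside the outer $m_2$, turning $\langle e_J, m_2(\alpha, \cdot)\rangle$ into $\pm \langle m_2(e_J,\alpha), \cdot\rangle$ and thereby matching the sum above after relabeling $I \leftrightarrow J$.

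The main obstacle will be the sign bookkeeping in the last step: one must precisely verify that the shift $(n-1)|\bullet|$ inside the supertrace, together with the overall $(-1)^{\frac{n(n-1)}{2}}$, is exactly compensated by the signs produced when applying graded symmetry of Poincar\'e pairing and cyclic symmetry of $m_2$ (taking into account the degree $n$ of $vol_L$ and the parity of $\alpha$, which must satisfy $|\alpha| \equiv n \pmod 2$ for the statement to be non-trivial). Since cyclic symmetry of $m_2$ is available only for the cyclic $A_\infty$ model $(H,\{m_{k,\beta}^{c,\rho}\})$ rather than for the $T^n$-equivariant model, a secondary step is to confirm that both the open-closed map and the supertrace can be computed in the cyclic model, via the isomorphism $\Phi$ of Corollary 3.2.22 in \cite{FOOOtoric3}.
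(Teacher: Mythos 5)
Your proposal follows essentially the same route as the paper: first writing $\ks \circ OC(\alpha)$ as $\langle i^{c*}_{qm,b^c}(i_{*,qm,\xi^c}(\alpha)), vol_L\rangle_L$ and using the adjointness of Theorem \ref{thm:vd} together with the annulus identity \eqref{eq10} to reach $(-1)^{\frac{n(n-1)}{2}}\sum_{I,J} g^{IJ}\langle m_2^{b^c,\xi^c,\xi^c}(e_I,\alpha), m_2^{\xi^c,b^c,b^c}(e_J,vol_L)\rangle$, and then converting this double sum into the supertrace via cyclic symmetry and graded symmetry of the pairing, which is exactly the content of the paper's Lemma \ref{signmukai}. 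The sign bookkeeping you flag as the main obstacle is indeed where the paper spends its effort, and your plan to work in the cyclic model via the isomorphism $\Phi$ matches the paper's use of the $\xi^c, b^c$ decorations.
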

Assuming the above proposition, let us prove Theorem \ref{thm:kscommute}.
\begin{proof}
Comparing the definition of $B$-invariant in \eqref{eq:Bdefo} and \eqref{eqksoc},
the difference  lies in two expressions 
$$(-1)^{\frac{n(n+1)}{2}+1} [X_n,\cdots,X_1], (-1)^{\frac{n(n-1)}{2} } vol_L.$$

Also, from Theorem \ref{thm:vd}, \eqref{eq:Bdefo} only depend on degree $n$ part $[X_n,\cdots,X_1]^{(n)}$ of $[X_n,\cdots,X_1]$. Therefore, the difference of $B(\alpha)$ and $ \ks \circ OC(\alpha)$ is  exactly given by the scaling factor $c_L$.
$$(-1)^{\frac{n(n+1)}{2}+1} [X_n,\cdots,X_1]^{(n)} = c_L \cdot  (-1)^{\frac{n(n-1)}{2} }vol_L$$
\end{proof}
We give a proof of Proposition \ref{ksoc}.
\begin{proof}
First, note that  (since $ \langle \be_L, vol_L \rangle =1$)
$$\ks \circ OC(\alpha) =  
 \langle (\ks \circ OC(\alpha)) \be_L, vol_L \rangle_{L} =  
 \langle ( i^{c*}_{qm,b^c} \circ    i_{*,qm,\xi^c} (\alpha)), vol_L \rangle_{L}$$
 Using Theorem \ref{thm:vd} and \eqref{eq10}, this is the same as
$$= \langle ( i_{*,qm,\xi^c} (\alpha)), i_{*,qm,b^c} (vol_L)\rangle_{L} =(-1)^{\frac{n(n-1)}{2}} \sum_{I,J} g^{IJ} \langle m_2^{b^c,\xi^c,\xi^c}(e_I, \alpha), m_2^{\xi^c,b^c,b^c}(e_J, vol_L) \rangle_{PD_{L(u)}}.$$
The following lemma can be used to prove the proposition.
\begin{lemma}\label{signmukai} We have 
$$ \tr \big( \bullet \mapsto (-1)^{n |\bullet|} m_2^{\xi^c,\xi^c,b^c}(\alpha, m_2^{\xi^c,b^c,b^c}(\bullet,vol_L)) \big)
=\sum_{I,J}  g^{IJ} \langle m_2^{b^c,\xi^c,\xi^c}(e_I,\alpha),m_2^{\xi^c,b^c,b^c}(e_J,vol_L) \rangle $$
\end{lemma}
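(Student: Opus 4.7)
The plan is to reduce this identity to a standard trace-to-pairing computation combined with the graded cyclic symmetry of the $m_2$-operation, which is available in the toric setting thanks to the cyclic $\AI$-algebra construction of Fukaya-Oh-Ohta-Ono.

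First, I would rewrite the trace on the left hand side via the Poincar\'e pairing. For a homogeneous basis $\{e_I\}$ with $g_{IJ}=\langle e_I,e_J\rangle_{PD_L}$ and inverse matrix $g^{IJ}$, every linear operator $\phi$ on the Floer complex satisfies
\[ \mathrm{tr}(\phi)=\sum_{I,J}g^{JI}\,\langle \phi(e_I),e_J\rangle_{PD_L}, \]
since $\phi(e_I)=\sum_{J,K}g^{JK}\langle\phi(e_I),e_K\rangle_{PD_L}\,e_J$. Applying this to $\phi(\bullet)=(-1)^{n|\bullet|}m_2^{\xi^c,\xi^c,b^c}(\alpha,m_2^{\xi^c,b^c,b^c}(\bullet,vol_L))$ expresses the LHS as a double sum that almost matches the RHS, except that $\alpha$ and the test element $e_J$ sit on the wrong sides of the two pairings.

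Second, I would rearrange each summand using the graded cyclic symmetry of $m_2$ with respect to $\langle\cdot,\cdot\rangle_{PD_L}$, namely an identity of the form
\[ \langle m_2(x_1,x_2),x_3\rangle_{PD_L}\;=\;(-1)^{\epsilon(x_1,x_2,x_3)}\,\langle m_2(x_3,x_1),x_2\rangle_{PD_L}, \]
which holds at the chain level for the cyclic $\AI$-algebra of \cite{FOOOtoric3}. Setting $x_1=\alpha$, $x_2=m_2^{\xi^c,b^c,b^c}(e_I,vol_L)$, $x_3=e_J$ transforms each summand into $\pm g^{JI}\langle m_2^{b^c,\xi^c,\xi^c}(e_J,\alpha),m_2^{\xi^c,b^c,b^c}(e_I,vol_L)\rangle_{PD_L}$. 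Relabeling $I\leftrightarrow J$ and using the graded symmetry of $g^{IJ}$ then produces the RHS.

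The main obstacle will be sign bookkeeping. The prefactor $(-1)^{n|\bullet|}$ on the LHS should be exactly what is needed to absorb the Koszul sign from the cyclic shift, after taking into account $|m_2^{\xi^c,b^c,b^c}(e_I,vol_L)|\equiv|e_I|+n \pmod 2$, the fixed degree of $\alpha$ (effectively $n$ modulo $2$, since otherwise $B(\alpha)=0$ by Lemma \ref{lem:stp}), and the graded symmetry of the Poincar\'e pairing. I would verify these signs against the conventions of \cite{FOOOtoric3,Fu}, where the cyclic $\AI$-structure is constructed precisely so that such trace identities hold with no additional correction.
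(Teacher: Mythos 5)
Your proposal is correct and follows essentially the same route as the paper: express the trace via the basis $\{e_I\}$ and the inverse Gram matrix $g^{IJ}$, then use one application of the graded cyclic symmetry of $m_2$ to move the test element into the first slot, with the prefactor $(-1)^{n|\bullet|}$ absorbing the Koszul signs. The only step you elide that the paper spells out is the verification that the boundary-deformed operations $m_2^{\xi^c,b^c,b^c}$ etc.\ still satisfy the cyclic symmetry inherited from the undeformed cyclic $\AI$-structure, but you implicitly assume this correctly and it is routine.
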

Let us explain how to prove the proposition using the lemma.
\begin{eqnarray*}
\ks \circ OC(\alpha) &=& (-1)^{\frac{n(n-1)}{2}} \sum_{I,J} g^{IJ} \langle m_2^{b^c,\xi^c,\xi^c}(e_I, \alpha), m_2^{\xi^c,b^c,b^c}(e_I, vol_L) \rangle_{PD_{L(u)}} \\
&=& (-1)^{\frac{n(n-1)}{2}} \mathrm{Tr} \big( \bullet \mapsto  (-1)^{n|\bullet|}  m_2^{\xi^c,\xi^c,b^c}(\alpha, m_2^{\xi^c,b^c,b^c}(\bullet,vol_L)) \big) \\
&=&   (-1)^{  \frac{n(n-1)}{2}}\str \big( \bullet \to   (-1)^{(n-1)|\bullet|} m_2^{\xi^c,\xi^c,b^c}(\alpha, m_2^{\xi^c,b^c,b^c}(\bullet,vol_L) \big)
\end{eqnarray*}
\end{proof}

Here is the proof of Lemma \ref{signmukai}.
\begin{proof}
We will use cyclic symmetry to prove this lemma. We first show that the boundary deformation of cyclic $\AI$-structure
still has the cyclic symmetry.

Recall that for the symmetric pairing $\langle,\rangle$ on $V$, if we set $\langle v,w\rangle_{cyc} = (-1)^{|v|(|w|+1)}\langle v,w\rangle$, then resulting pairing becomes skew-symmetric for $V[1]$.
A cyclic symmetric pairing on an $\AI$-algebra is a skew-symmetric non-degenerate pairing 
$\langle, \rangle_{cyc}$, which satisfies 
$$ \langle m_k(w_0,\cdots,w_{k-1}), w_k \rangle = (-1)^*  \langle m_k(w_1,\cdots,w_{k}), w_1 \rangle$$
where $* = |w_0|'(|w_1|'+\cdots+|w_k|')$ for the shifted degrees $|w|'=|w|-1$. 
%Note that cyclic property $$ \langle m_1(\alpha),\beta \rangle =  (-1)^{|\alpha|'|\beta|'} \langle m_1(\beta),\alpha \rangle$$
%implies that there exist a well-defined pairing on homology of $m_1$.
For  boundary deformed operations, we have
$$  \langle m_1^{b,b'}(\alpha),\beta \rangle  = \sum_{k,l} m_{k+1+l}(b,\cdots,b,\alpha,b',\cdots,b'),\beta \rangle$$
$$=  (-1)^{|\alpha|'|\beta|'} \sum_{k,l} m_{k+1+l}(b',\cdots,b',\beta,b,\cdots,b),\alpha \rangle =  (-1)^{|\alpha|'|\beta|'} \langle m_1^{b',b}(\beta),\alpha \rangle$$

Similarly, one can check that
$$ \langle m_k^{b_0,\cdots,b_k}(\alpha_1,\cdots,\alpha_k),\alpha_{k+1} \rangle 
=  (-1)^{|\alpha_k|'(|\alpha_1|' + \cdots + |\alpha_{k-1}|')} \langle m_k^{b_k,b_0,\cdots,b_{k-1}}(\alpha_k,\alpha_1,\cdots,\alpha_{k-1}),\alpha_{k} \rangle $$
Therefore in the following proof, the boundary deformed version should automatically follow, and we will omit them for simplicity.

Now, we are ready to prove the lemma.
By dimension counting of the right hand side, the statement is only non-trivial if $|\alpha|=n\;  (mod\;  2)$, which we will assume. 
Let us denote the basis of $H(L,L)$ by $\{e_I\}$ with  a non-degenerate pairing $\langle e_I, e_J \rangle = g_{IJ}$ and we denote by $g^{IJ}$ its inverse matrix.
For $x = \sum_I a_Ie_I$, note that $a_I = \sum_J \langle x, g^{JI}e_J \rangle$.

Note that
$\tr \big( e_I \mapsto (-1)^{n|e_I|} m_2(\alpha, m_2(e_I,vol_L)) \big)$
equals 
\begin{eqnarray*}
\sum_{I,J} (-1)^{|e_I||vol_L|} \langle  m_2(\alpha, m_2(e_I,vol_L)), g^{JI}e_J \rangle & = &\sum_{I,J} (-1)^{\epsilon_1}  \langle  m_2(\alpha, m_2(e_I,vol_L)), g^{JI}e_J \rangle_{cyc} \\
 & =& \sum_{I,J} (-1)^{\epsilon_2} g^{JI}  \langle  m_2(e_J,\alpha), m_2(e_I,vol_L) \rangle_{cyc} \\
&=&\sum_{I,J} (-1)^{\epsilon_3} g^{JI} \langle m_2(e_J,\alpha), m_2(e_I,vol_L) \rangle \\
&=&\sum_{I,J}  g^{IJ} \langle m_2(e_I,\alpha), m_2(e_J,vol_L) \rangle 
\end{eqnarray*}
where $$\epsilon_1 =  |e_I|(|vol_L|+ |e_J|+1), \epsilon_2 = |e_I||vol_L|, \epsilon_3 = (|e_I|+n)(|\alpha|+n), \epsilon_4 = \frac{n(n-1)}{2}+n(1+|\alpha|).$$
Here, $\epsilon_1$ is computed using Lemma 3.10.9 of \cite{FOOO}. We compute exponents modulo 2.
$$ \epsilon_1 = |e_I||vol_L| + (|\alpha|+|vol_L|+ |e_I|)(|e_J|+1) = |e_I|(|vol_L|+ |e_J|+1) .$$
$\epsilon_2$ is computed using Koszul sign of cyclic symmetry.
$$\epsilon_2 = \epsilon_1 +|e_J|(|\alpha| + |m_2(e_I,vol_L)|) =\epsilon_1 + (n+|e_I|+1)(|\alpha|+1 + |e_I| + |vol_L|+1) =\epsilon_1 + |e_I|(n+|e_I|+1) $$
$\epsilon_3$ is computed using  Lemma 3.10.9 of \cite{FOOO} again.
$$\epsilon_3 = \epsilon_2 + |m_2(e_J,\alpha)|(|m_2(e_I,vol_L)| +1) = \epsilon_2 + (|\alpha|+ |e_J|)(|e_I|+|vol_L|+1) = (|e_I|+n)(|\alpha|+n)=0$$
%$\epsilon_4$ is computed using Theorem 3.10.17 of \cite{FOOO}:
%(\cite{FOOO} proves the claim for a single Lagrangian case, but for the sign computation for the transversal case
%is the same. This is because the gluing maps of disc case (Lemma 8.3.5 \cite{FOOO}) and the transversal case(Section 8.7 \cite{FOOO} ) are both orientation preserving.)
%$$ \epsilon_4 = \epsilon_3 +  \frac{n(n-1)}{2} + (|\alpha|+ |e_J|)(|vol_L|+n) + (n-|\alpha|)(n-|vol_L|) =  \frac{n(n-1)}{2} +n(1+|\alpha|) $$
\end{proof}

The proof of  signs in  Theorem \ref{thm:vd} was omitted in the reference, and we give its proof for completeness.
We follow the sign convention of \cite{FOOO}.
\begin{lemma}
For the case that $\dim(X)$ is even, we have
$$( M \times_L A ) \times_X B = (-1)^{\dim(B)\cdot |A|} (M \times_X B ) \times_L A $$
\end{lemma}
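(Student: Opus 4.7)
\medskip

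The plan is to reduce the statement to a pure sign computation, since both sides describe the same underlying set of triples $(m,a,b)$ with $\pi_L(m)=\pi_L(a)$ and $\pi_X(m)=\pi_X(b)$; only the induced orientations can differ. For this I use two structural properties of oriented fiber products in the FOOO convention: the commutativity rule
\[
P\times_Y Q \;=\; (-1)^{(\dim P-\dim Y)(\dim Q-\dim Y)}\; Q\times_Y P,
\]
and the strict associativity
\[
B\times_X (M\times_L A)\;=\;(B\times_X M)\times_L A,
\]
which holds without sign once one checks that both bracketings induce the same orientation on the triple fiber product via the canonical short exact sequence of tangent spaces.

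First I would apply commutativity over $X$ with $P=M\times_L A$ and $Q=B$, producing a sign $(-1)^{(\dim M+\dim A-\dim L-\dim X)(\dim B-\dim X)}$ and the rearranged product $B\times_X(M\times_L A)$. Then I invoke associativity to rewrite this as $(B\times_X M)\times_L A$. Finally I apply commutativity once more, now to $B\times_X M$, picking up a further sign $(-1)^{(\dim B-\dim X)(\dim M-\dim X)}$ and arriving at $(M\times_X B)\times_L A$.

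Adding the two sign exponents modulo $2$,
\[
(\dim M+\dim A-\dim L-\dim X)(\dim B-\dim X)+(\dim B-\dim X)(\dim M-\dim X),
\]
the $(\dim M-\dim X)(\dim B-\dim X)$ contributions cancel and one is left with $(\dim A-\dim L)(\dim B-\dim X)\pmod 2$. Here the hypothesis that $\dim X$ is even is used to drop $\dim X$ from the second factor, giving the sign $(-1)^{\dim B\cdot(\dim A-\dim L)}=(-1)^{\dim B\cdot|A|}$ in the paper's convention where $|A|\equiv\dim A-\dim L\pmod 2$.

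The main obstacle is the verification of strict associativity in step two: one must check that both ways of building the triple fiber product induce the same orientation, via the comparison exact sequence
\[
0\longrightarrow T\bigl((M\times_L A)\times_X B\bigr)\longrightarrow TM\oplus TA\oplus TB\longrightarrow TL\oplus TX\longrightarrow 0,
\]
which is independent of the order of factors on the middle term up to a reshuffle of $TL$ and $TX$ whose sign is absorbed by $\dim X$ being even. Once this is in place, the remaining computation is the bookkeeping above.
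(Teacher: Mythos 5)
Your computation is correct: the total exponent $(\dim M+\dim A-\dim L-\dim X)(\dim B-\dim X)+(\dim B-\dim X)(\dim M-\dim X)\equiv(\dim B-\dim X)(\dim A-\dim L)$ reduces, using $\dim X$ even, to $\dim B\cdot|A|$ with $|A|\equiv\dim L-\dim A\pmod 2$, exactly as claimed. The paper itself omits the proof, calling it ``a standard sign calculation as in [FOOO]'', and your argument (graded commutativity over $X$, sign-free associativity from FOOO's convention, commutativity again) is precisely that standard calculation, so there is nothing further to reconcile.
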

We omit its proof since it is a standard sign calculation as in \cite{FOOO}.
Also recall from \cite{FOOO} Section 3.10 that
\begin{equation}
\langle PD(P),PD(Q) \rangle_Z = (-1)^{|P||Q|} P \times_Z Q
\end{equation}
\begin{lemma}\label{signocco}
$$\langle OC(\alpha), h \rangle_X =\langle \alpha, CO(h) \rangle_L$$
\end{lemma}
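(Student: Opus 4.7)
The plan is to express both sides of the identity as iterated fiber products of a single moduli space against geometric cycles representing $\alpha$ and $h$, then use the just-stated fiber-product interchange lemma together with the Poincaré-duality formula $\langle PD(P),PD(Q)\rangle_Z = (-1)^{|P||Q|} P \times_Z Q$ to match them. Concretely, choose geometric cycles $A \subset L$ representing $\alpha$ and $Q \subset X$ representing $h$. From the definitions of the open-closed and closed-open maps in terms of the moduli space $\mathcal{M}_{1,1}(M,L,\beta)$ of $J$-holomorphic disks with one interior and one boundary marked point,
$$PD(OC(\alpha)) = (ev_{int})_*\bigl(\mathcal{M}_{1,1}(M,L,\beta)\times_L A\bigr), \qquad PD(CO(h)) = (ev_{bdy})_*\bigl(\mathcal{M}_{1,1}(M,L,\beta)\times_X Q\bigr).$$
Substituting these into $\langle OC(\alpha),h\rangle_X$ and $\langle \alpha,CO(h)\rangle_L$ using the $PD$ formula above converts both pairings into expressions of the shape $(\mathcal{M}\times_L A)\times_X Q$ and $A\times_L(\mathcal{M}\times_X Q)$, weighted by explicit $PD$ signs $(-1)^{|OC(\alpha)|\cdot|h|}$ and $(-1)^{|\alpha|\cdot|CO(h)|}$ respectively.

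Next I would apply the fiber product interchange $(M\times_L A)\times_X B = (-1)^{\dim(B)\cdot |A|}(M\times_X B)\times_L A$ (with graded commutativity of fiber product) to rearrange one side into the other, summing over $\beta$. This step is where the hypothesis that $\dim X$ is even enters essentially, exactly as in the interchange lemma. The residual task is then purely a sign check: substituting the degree relations $|A| = \dim L - |\alpha|$ and $|Q| = \dim X - |h|$, one verifies that the Koszul sign $\dim(Q)\cdot|A|$ from the interchange, combined with the two Poincaré-duality signs and the signs attached to the orientations of $OC(\alpha)$ and $CO(h)$ as pushforwards, cancels identically modulo $2$.

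The obstacle is not conceptual but rather this sign bookkeeping, which is exactly the gap the authors note was omitted in \cite{FOOO}. A secondary technical point worth addressing is that in the toric FOOO setup $OC$ and $CO$ are constructed with genuinely different perturbation schemes (the $p$- and $q$-perturbations of \cite{FOOOtoric3}); for the proof above to go through, one must invoke the fact established in \cite{FOOOtoric3} (in particular Theorem 3.3.8 and the arguments behind it) that both pairings are independent of perturbation choice, so that the two cycles $PD(OC(\alpha))$ and $PD(CO(h))$ can be represented using a single common perturbed moduli space $\mathcal{M}_{1,1}(M,L,\beta)$ to which the interchange lemma applies.
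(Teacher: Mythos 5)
Your proposal matches the paper's proof essentially verbatim: both write $OC(\alpha)=\mathcal{M}_{1,1}\times_L\alpha$ and $CO(h)=\mathcal{M}_{1,1}\times_X h$, convert the pairings via $\langle PD(P),PD(Q)\rangle_Z=(-1)^{|P||Q|}P\times_Z Q$, apply the fiber-product interchange lemma (where evenness of $\dim X$ enters), and close with the sign check $\dim(\alpha)|h|+\dim(h)|\alpha|\equiv 0$. The only addition is your remark on reconciling the $p$- and $q$-perturbations, which the paper handles by citing \cite{FOOOtoric3} Theorem 3.3.8 elsewhere; the argument itself is the same.
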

\begin{proof}
Recall that $OC(\alpha) = \mathcal{M}_{1,1} \times_L (\alpha)$ and $\dim(OC(\alpha)) = deg(OC(\alpha))=\dim(\alpha)$.
and hence
$$\langle OC(\alpha), h \rangle_X = (-1)^{\dim(\alpha)|h|}
 OC(\alpha) \times_X h  =  (-1)^{\dim(\alpha)|h|}
  \big( \mathcal{M}_{1,1} \times_L (\alpha) \big) \times_X h$$
$$ =  
\big( \mathcal{M}_{1,1} \times_X  h \big)  \times_L \alpha=  CO(h) \times_L \alpha = (-1)^{|h||\alpha|} \langle CO(h), \alpha \rangle = \langle \alpha, CO(h) \rangle$$
The sign term in 3rd equality  vanishes  because it is  $\dim(\alpha)|h| + \dim(h)|\alpha|$. This is even since $|h| = \dim(h)$ mod 2.
\end{proof}

\section{The case with with Morse singularities}
Let $M$ be a toric manifold and $L$ a Lagrangian torus fiber. Assume that $W_L$ has Morse singularity at  $(L,b=\xi)$.
We prove the following diagram in this section, which is a reformulation of the result of Fukaya-Oh-Ohta-Ono  \cite{FOOOtoric3}. 
\begin{prop}\label{prop:m}
If $W_{L}$ has non-degenerate (Morse) singularity at $\xi$, the following diagram commutes.
\begin{equation}\label{diagramfooo}   \xymatrix{
	QH^*(M)_L  \otimes QH^*(M)_L \ar[r]^-{ \langle \cdot, \cdot \rangle_{PD_M}} \ar[d]^{\ks \otimes \ks} & \;\;\;\; \Lambda \;\;\;\; \ar@{=}[dd] \\
	  Jac(W_L) \otimes Jac(W_L) \ar[d]^{I\otimes I
	  } &   \\
	 \big(Jac(W_L) \frac{dy_1}{y_1} \cdots \frac{dy_n}{y_n} \big) \otimes \big( Jac(W_L) \frac{dy_1}{y_1} \cdots \frac{dy_n}{y_n} \big)\;\;\;\; \ar[r]^-{\langle \cdot, \cdot \rangle_{res}} &\;\;\;\; \Lambda \;\;\;\; }  
\end{equation}
The isomorphism $I$ is given by multiplying $c_L  \frac{dy_1}{y_1} \cdots \frac{dy_n}{y_n}$. If $M$ is nef, $c_L =1$, otherwise $c_L \in \Lambda_0$.
\end{prop}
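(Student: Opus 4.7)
The plan is to reduce the commutativity of \eqref{diagramfooo} to an identity on traces and then invoke the results of Fukaya-Oh-Ohta-Ono as input. Since $\ks$ is a ring isomorphism (the closed string mirror symmetry theorem of \cite{FOOOtoric3}), we have $\ks(v)\ks(w) = \ks(v\cdot w)$, and since both $QH^*(M)_L$ and $Jac(W_L)$ are Frobenius algebras, pairing-preservation is equivalent to trace-preservation; it therefore suffices to show
\[
\int_M u \;=\; c_L^2\,\tr_{cl}\bigl(\ks(u)\bigr), \qquad u \in QH^*(M)_L,
\]
where $\tr_{cl}(h):=\mathrm{Res}\bigl(h\,\tfrac{dy_1}{y_1}\wedge\cdots\wedge\tfrac{dy_n}{y_n}\bigr)$. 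Using surjectivity of $OC$ onto $QH^*(M)_L$ (\cite{FOOOtoric3}), I would write $u = OC(\alpha)$. The adjointness of Lemma \ref{signocco} applied with $h = \be_M$, together with $CO(\be_M) = \be_L$, gives $\int_M OC(\alpha) = \tr_L(\alpha)$. On the other side, Theorem \ref{thm:kscommute} states $c_L\,(\ks\circ OC)(\alpha)\,\tfrac{dy}{y} = B(\alpha)$, so $c_L^2\,\tr_{cl}(\ks(OC(\alpha))) = c_L\,\mathrm{Res}(B(\alpha))$. The proposition thus reduces to the single identity
\begin{equation*}
\tr_L(\alpha) \;=\; c_L\,\mathrm{Res}\bigl(B(\alpha)\bigr), \qquad \alpha \in HF(L,L). \tag{$\star$}
\end{equation*}

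To prove $(\star)$, I would use the Morse hypothesis to localise both sides as sums over the critical points $\xi$ of $W_L$. On the right, the complex-geometric residue at $\xi$ evaluates a function at $\xi$ and divides by $\mathrm{Hess}(W)(\xi)$; since $B(\alpha)$ is built from $m_2$-products against $[X_n,\ldots,X_1]$, whose degree-$n$ part equals $(-1)^{n+1}c_L\cdot vol_L$ by Definition \ref{def:fv}, $\mathrm{Res}(B(\alpha))$ explicitly carries one factor of $c_L$. On the left, $\tr_L(\alpha)$ pairs $\alpha$ with $\be_L$ at $\xi$, and the generators $X_i$ form a Clifford algebra under $m_2^\xi$ whose quadratic form is precisely $\mathrm{Hess}(W)(\xi)$ (cf.\ \cite{Cho} for toric cases). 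Matching these local contributions produces $(\star)$; in the nef case the higher disc corrections collectively yield $c_L = 1$, and $(\star)$ then reduces to the isometry statement of \cite{FOOOtoric3}.

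The main obstacle is the book-keeping of this last step: one must verify that the classical Hessian appearing in the residue formula matches the Clifford-algebra normalisation of the Floer top product at each critical point, with all disc corrections entering only through the single factor $c_L$. Geometrically this is precisely the annulus cobordism analysis of \cite[\S 3.4]{FOOOtoric3}: the two possible degenerations of the moduli of annuli relate the Floer-theoretic $Z$-residue pairing to the complex-geometric residue pairing, the ratio being recorded exactly by the isomorphism $I$. The proposition thereby reformulates their result in the language of $B$-invariants.
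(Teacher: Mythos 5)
Your reduction steps are all sound, and your overall strategy coincides with the paper's. Since $W_L$ is Morse, the local factor $QH^*(M)_L\cong Jac(W_L)_\xi\cong\Lambda$ is one--dimensional, so checking the pairing on the idempotent $\be_u$ (as the paper does, via Lemma \ref{lem:uu}: $\langle \be_u,\be_u\rangle_{PD_M}=1/Z$, against $\langle \be_u,\be_u\rangle_{res}=1/\det\mathrm{Hess}(W)$) and checking your trace identity $(\star)$ for all $\alpha$ are equivalent; your route through $OC$--surjectivity, Lemma \ref{signocco} and Theorem \ref{thm:kscommute} is a mild repackaging. Both formulations funnel into the single identity $(c_L)^2\, Z=\det\mathrm{Hess}(W)$ at each critical point.

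The gap is that this identity --- which you describe as ``matching these local contributions'' and ``book-keeping'' --- is precisely where the paper's proof has its actual content, and it is not supplied by the annulus cobordism of \cite{FOOOtoric3} alone. The annulus analysis (Theorem \ref{fooothm1}) only expresses $\langle i_*(h),i_*(h)\rangle_{PD_M}$ as a sum of $m_2$--pairings; one must still evaluate that sum. The paper does this by (i) diagonalising $\partial^2W/\partial x_i\partial x_j$ and using $m_2^b(e_i,e_j)+m_2^b(e_j,e_i)=\tfrac{\partial^2 W}{\partial x_i\partial x_j}\cdot\be$ to realise $HF((L,\xi),(L,\xi))$ as a Clifford algebra $Cl(n,\vec d)$ with $2^n d_1\cdots d_n=\det\mathrm{Hess}(W)$ (cf.\ \cite{Cho}); (ii) collapsing the double sum defining $Z$ using $e_I''\cup e_{\{1,\dots,n\}}''=\pm(\prod_{i\in I}d_i)\,e_{I^c}''$, yielding \eqref{eq:ii}; and (iii) invoking Theorem \ref{thm:vd} to replace $e_{\{1,\dots,n\}}''$ by its top--degree part $c_L\cdot vol_L$, which is the sole source of the factor $c_L^2$. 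You name all three ingredients, but the factor--of--two bookkeeping in (i)--(ii) and the degree--$n$ reduction in (iii) are exactly the points that could fail and must be verified; as written, your argument is a correct reduction of the proposition to an unproven identity. Supplying the computation of the subsection ``Computing $Z$-invariant'' closes the argument, and the nef case $c_L=1$ then follows because the corrections to $e_1''\cup^{Q,c}\cdots\cup^{Q,c}e_n''$ lie in $\Lambda_+$ and vanish for degree reasons.
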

\begin{remark}
Section 7 \cite{FOOOtoric1}, Section 2.2 \cite{FOOOtoric3} explains that quantum cohomology as well as
Jacobian ring localizes at each critical point $(L(u),\xi)$.  $QH^*(M)_L$ in the above diagram denotes this  component.
\end{remark}
It is still unknown whether the above diagram commutes in the non-Morse singularities.

Let us first recall the corresponding theorem from \cite{FOOOtoric3}.
\begin{theorem}\cite[Theorem 1.2.34]{FOOO}\label{thm:fooo1}
\begin{enumerate}
\item For toric manifold $M$, the Kodaira-Spencer map $ks_\frak{b}$ induces a ring isomorphism
$$ks_\frak{b}: T_\frak{b} H(M;\Lambda_0) \to Jac(W_\frak{b})$$
\item If $W_\frak{b}$ is a Morse function, then we have
$$\langle v,w \rangle_{PD_M} = \langle ks_\frak{b}(v),  ks_\frak{b}(w) \rangle_{res_Z} $$
for $v,w \in T_\frak{b} H(M;\Lambda_0)$
\item $ \langle \cdot,\cdot \rangle_{res_Z}$ equals the  standard residue pairing of complex geometry  $\langle \cdot, \cdot \rangle_{res}$
if toric manifold $M$ is nef and  bulk parameters have $deg(\frak{b}) =2$. In general, they are equal  only modulo $\Lambda_+$.
\end{enumerate}
\end{theorem}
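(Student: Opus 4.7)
The plan is to prove Theorem \ref{thm:fooo1} in three parts, following the strategy of Fukaya-Oh-Ohta-Ono. Part (1) asserts that $\ks_\frak{b}$ is a ring isomorphism; I would first verify the ring-homomorphism property by degenerating the moduli space of disks with two interior marked points (each carrying a bulk class, plus boundary insertions of the bounding cochain). The two possible degenerations --- sphere bubbling at the interior versus disk splitting along a chord --- match the quantum product on $QH^*(M)_\frak{b}$ and the ring product on $Jac(W_\frak{b})$ respectively, yielding $\ks_\frak{b}(v \cdot w) = \ks_\frak{b}(v) \cdot \ks_\frak{b}(w)$. Bijectivity can then be read off by tracking toric divisor classes: the image of a toric divisor $D_i$ under $\ks_\frak{b}$ is (up to units) $y_i \partial_{y_i} W_\frak{b}$, and Batyrev's presentation of $Jac(W_\frak{b})$ matches the quantum Stanley-Reisner presentation of $QH^*(M)_\frak{b}$ critical-component by critical-component.

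Part (2), the isometry property, is where the real work lies. The key geometric input is the moduli space $\mathcal{M}^{ann}_{1,1}$ of $J$-holomorphic annuli with one interior marked point on each boundary circle, carrying bulk insertions of $v$ and $w$ respectively. This space is one-dimensional in its conformal modulus and admits a cobordism between two boundary configurations. At one end, the central circle of the annulus pinches, producing a sandglass of two disks meeting at an interior node; summing over a basis of $H^*(M)$ weighted by the inverse Poincar\'e pairing recovers $\langle v, w \rangle_{PD_M}$. At the other end, the annulus stretches and splits along a chord into a bi-crescent of two disks sharing an arc; this produces the dual-basis expression $\sum_{I,J} g^{IJ} \langle m_2(e_I, v), m_2(e_J, w) \rangle$ as in Theorem \ref{fooothm1}, which equals $\langle \ks_\frak{b}(v), \ks_\frak{b}(w) \rangle_{res_Z}$ by the definition of $\ks_\frak{b}$ via boundary-bulk and the $Z$-pairing. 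For part (3), I would expand both residue pairings in powers of $T$: both reduce to the classical intersection pairing modulo $\Lambda_+$ via the Morse residue formula $\tr_{cl}(f) = \sum_{z \in crit(W)} f(z)/\mathrm{Hess}\, W(z)$ applied to the leading term of $W_\frak{b}$, which yields the general statement. In the nef case with $\deg(\frak{b}) = 2$, $T^n$-equivariance and the absence of negative Maslov-index disks force the higher corrections in the $Z$-pairing to vanish, giving equality on the nose.

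The main obstacle is the analytic setup for part (2): one must construct coherent multisection perturbations on $\mathcal{M}^{ann}_{1,1}$ that are simultaneously compatible with the cyclic $\AI$-perturbation of the Lagrangian Floer theory on $L(u)$, the $p$-perturbation used to define the open-closed map, and the $q^c$-perturbation used for the closed-open map and $\ks_\frak{b}$. These schemes are a priori distinct, and the transition between them requires the $\AI$-quasi-isomorphism of Corollary 3.2.22 of \cite{FOOOtoric3} together with a chain-level verification that gluing of annulus degenerations produces the correct signs and combinatorial factors. Identifying the stretching end with the dual-basis sum $\sum_{I,J} g^{IJ}(\cdots)$ requires careful bookkeeping of orientations on the one-dimensional moduli space, and this is where the bulk of the technical effort resides.
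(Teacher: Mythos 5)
This statement is not proved in the paper at all: it is quoted verbatim as Theorem 1.2.34 of Fukaya--Oh--Ohta--Ono's book (the reference \cite{FOOOtoric3}), and the paper uses it as an external input to its own Proposition \ref{prop:m}. So there is no internal proof to compare your proposal against; what you have written is an attempted reconstruction of FOOO's argument, which occupies a substantial part of their 340-page monograph and cannot reasonably be re-derived in a paragraph. Your sketch of part (1) (degeneration of two-interior-marked-point disk moduli for the ring-homomorphism property, quantum Stanley--Reisner versus Batyrev presentation for bijectivity) and your identification of the annulus cobordism with its two degenerations (sandglass versus bi-crescent) as the engine of part (2) do track FOOO's actual strategy, as does your flagging of the incompatibility between the cyclic, $p$-, and $q^c$-perturbation schemes as the main technical burden.

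However, there is a structural misreading in your part (2) that the paper itself warns about immediately after the theorem statement. The pairing $\langle \cdot,\cdot \rangle_{res_Z}$ is not an independently given residue pairing that the annulus argument is then shown to match with $\langle \cdot,\cdot\rangle_{PD_M}$; rather, FOOO \emph{define} it through the $A$-side $Z$-invariant, setting $\langle \be_u, \be_u \rangle_{res_Z} = 1/Z$, precisely so that (2) holds once one proves Lemma \ref{lem:uu} ($\langle \be_u,\be_u\rangle_{PD_M} = 1/Z(\xi)$), which is where the annulus identity (Theorem \ref{fooothm1}) actually enters. So (2) is largely definitional given the annulus degeneration result; the genuinely geometric comparison between the $A$-side and the complex-geometric residue pairing is concentrated in part (3), which your sketch treats most vaguely. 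There, the mechanism is not simply "$T^n$-equivariance kills higher corrections": the discrepancy between $\langle\cdot,\cdot\rangle_{res_Z}$ and $\langle\cdot,\cdot\rangle_{res}$ is governed by the relation $(c_L)^2 \cdot Z = \det(\mathrm{Hess}\, W)$ (this paper's Proposition \ref{prop:m} and the Clifford-algebra computation preceding it), where $c_L$ is the ratio of the Floer volume class to the classical volume class; nef plus $\deg(\frak{b})=2$ forces $c_L=1$, and in general the two pairings agree only modulo $\Lambda_+$ because $c_L = 1 + \sigma$ with $\sigma \in \Lambda_+$. Indeed, the whole point of the present paper is that this conformal factor $(c_L)^2$ is nontrivial in general, so any proof sketch of (3) that elides it is missing the essential quantity.
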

Fukaya-Oh-Ohta-Ono introduced the new pairing on $B$-side $ \langle \cdot,\cdot \rangle_{res_Z}$ using $A$-side invariant $Z$ instead of  the  standard residue pairing of complex geometry  $\langle \cdot, \cdot \rangle_{res}$.
(More precisely, they define $ \langle \be_u,\be_u\rangle_{res_Z} = 1/Z$ so that it is compatible with quantum cohomology pairing in Lemma \ref{lem:uu}.)
In this way, the Kodaira-Spencer map becomes an isometry in (2).  In our approach, we use the standard residue pairing
on $B$-side and the difference is explicitly given by $(c_L)^2$ (where $c_L$ is the ratio of Floer volume and classical volume class defined in Definition \ref{def:fv}.)
In remark 1.2.26 of \cite{FOOOtoric3}, it is mentioned that authors  expect that pairings of $A$ and $B$-side do not agree exactly in general cases.

\subsection{$Z$-invariant and $A$-side pairing}
We recall $Z$-invariant of Fukaya-Oh-Ohta-Ono from \cite{FOOOtoric3}.
$Z$-invariant is defined for each unital Frobenius algebra $(C, \langle,\rangle, \cup, \be)$.
Given a basis $\{e_I\}$ of $C$ with $e_0$ being the unit, set $g_{IJ} = \langle e_I, e_J \rangle$ and
let $g^{IJ}$ be its inverse matrix. 
Then, the Z-invariants of $(C, \langle,\rangle, \cup, \be)$ is given by
$$ Z(C) = \sum \sum (-1)^{|e_{I_1}||e_{J_1}| + \frac{n(n-1)}{2}} g^{I_1J_1}g^{I_2J_2}g^{I_30}g^{J_30}
\langle e_{I_1}\cup e_{I_2}, e_{I_3} \rangle \langle e_{J_1}\cup e_{J_2}, e_{J_3} \rangle$$

Then, given a filtered $\AI$-algebra for a Lagrangian torus fiber $L(u)$, FOOO constructed its cyclic $\AI$-algebra 
on the canonical model $$(H(L;\Lambda_0),\{m_k^{c,\frak{b},b}\},PD[L]).$$
If one forgets higher $m_{\geq 3}$, this gives unital Frobenius algebra, hence a $Z$-invariant.

If $W$ is not Morse singularity, $Z$ invariant vanishes ($B$-invariant defined in this paper provides an alternative, since
it does not vanish for non-Morse cases).

$Z$-invariant for Morse singularities is related to the quantum cohomology pairing as follows. Let us set bulk parameter $\frak{b}=0$ for simplicity.
Theorem \ref{fooothm1}( Proposition 3.4.1 of \cite{FOOOtoric3}) implies the following identity
\begin{prop}\cite[Proposition 3.4.2]{FOOOtoric3}
Let $(u,\xi) \in Crit(W)$. Then we have
$$ \langle i_{*,qm,(\xi,u)}(vol_{L(u)}), i_{*,qm,(\xi,u)}(vol_{L(u)}) \rangle_{PD_M} = Z(\xi)$$
Here $vol_{L(u)}$ is the degree $n$ volume form on $L(u)$ with $\int_{L(u)} vol_{L(u)} =1$.
\end{prop}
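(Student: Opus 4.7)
\emph{Proof sketch.} The plan is to derive this as an essentially formal corollary of Theorem \ref{fooothm1} combined with two standard Frobenius-algebra identities on $H(L(u);\Lambda_0)$. First, I would specialize Theorem \ref{fooothm1} to $v=w=vol_{L(u)}$ with boundary Maurer-Cartan $b^c=\xi^c$, obtaining
\[
\langle i_{*,qm,(\xi,u)}(vol_{L(u)}),\, i_{*,qm,(\xi,u)}(vol_{L(u)})\rangle_{PD_M}
= (-1)^{\frac{n(n-1)}{2}}\sum_{I,J}g^{IJ}\bigl\langle m_2^{\xi^c}(e_I,vol_{L(u)}),\, m_2^{\xi^c}(e_J,vol_{L(u)})\bigr\rangle_{PD_{L(u)}},
\]
where $\{e_I\}$ is a basis of $H(L(u);\Lambda_0)$ with $e_0=\be_L$ and $g_{IJ}=\langle e_I,e_J\rangle_{PD_L}$.

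Next, I would apply the identity $vol_{L(u)}=\sum_K g^{K0}\,e_K$, which expresses that $vol_{L(u)}$ is the Poincar\'e dual of the unit $\be_L$ under $\langle\cdot,\cdot\rangle_{PD_L}$ (using $\langle e_I,vol_{L(u)}\rangle_{PD_L}=\int_L e_I=\delta_{I,0}$). Substituting in both occurrences of $vol_{L(u)}$ and expanding the outer pairing by the completeness relation
\[
\langle x,y\rangle_{PD_L}=\sum_{M,N}g^{MN}\langle x,e_M\rangle_{PD_L}\langle e_N,y\rangle_{PD_L}
\]
turns the right-hand side into a six-index sum of triple products
\[
(-1)^{\frac{n(n-1)}{2}}\sum g^{IJ}g^{K0}g^{L0}g^{MN}\,\bigl\langle m_2^{\xi^c}(e_I,e_K),e_M\bigr\rangle\,\bigl\langle e_N,m_2^{\xi^c}(e_J,e_L)\bigr\rangle.
\]

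The remaining step is to match this with the defining formula for $Z(\xi)$. Since the deformed product $m_2^{\xi^c}$ makes $H(L(u);\Lambda_0)$ into a unital graded-commutative Frobenius algebra, its triple product $\langle m_2^{\xi^c}(a,b),c\rangle$ is cyclically symmetric up to Koszul signs. I would use this cyclic symmetry to rewrite the first triple product as $\langle m_2^{\xi^c}(e_M,e_I),e_K\rangle$ and the second as $\langle m_2^{\xi^c}(e_N,e_J),e_L\rangle$, and then relabel $(M,I,K,N,J,L)\mapsto(I_1,I_2,I_3,J_1,J_2,J_3)$. Under this relabeling the contraction factors $g^{IJ}g^{K0}g^{L0}g^{MN}$ become $g^{I_1J_1}g^{I_2J_2}g^{I_30}g^{J_30}$, matching the definition of $Z(\xi)$ term by term.

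The main obstacle is the careful bookkeeping of Koszul signs: the graded-symmetric swap of the two pairings, the graded cyclic rotation of the triple products, and the nontrivial degree of $vol_{L(u)}$ all contribute. The signs must reassemble into the $(-1)^{|e_{I_1}||e_{J_1}|}$ appearing in the definition of $Z$, with the prefactor $(-1)^{\frac{n(n-1)}{2}}$ already supplied by Theorem \ref{fooothm1}; one can verify this by the same modular arithmetic used in the proof of Lemma \ref{signmukai}.
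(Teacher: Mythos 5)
Your derivation is correct and is exactly the route the paper intends: the paper offers no proof of its own, merely asserting that Theorem \ref{fooothm1} implies the identity (citing \cite{FOOOtoric3}), and your substitution of $vol_{L(u)}=\sum_K g^{K0}e_K$ followed by the completeness relation and a cyclic rotation of the triple products is precisely how that implication is realized. The only part left implicit is the Koszul-sign verification producing the factor $(-1)^{|e_{I_1}||e_{J_1}|}$, which you correctly flag as the delicate point and which goes through by the same mod-2 arithmetic as in Lemma \ref{signmukai}.
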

\begin{lemma}\label{lem:uu}
$$\langle \be_u, \be_u \rangle_{PD_M}= \frac{1}{Z(\xi)}$$
\end{lemma}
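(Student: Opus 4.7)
The plan is to combine the identity from the proposition just above, $Z(\xi)=\langle i_{*,qm,(\xi,u)}(vol_{L(u)}), i_{*,qm,(\xi,u)}(vol_{L(u)})\rangle_{PD_M}$, with the adjointness of open-closed and closed-open maps (Theorem \ref{thm:vd} / Lemma \ref{signocco}) applied to the pair $(Q,h)=(\be_u, vol_{L(u)})$, then read off the normalization from a short unit-preservation argument.

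First, I would observe that at a Morse critical point $(L(u),\xi)$ of $W_{L(u)}$, the Kodaira--Spencer isomorphism $\ks$ identifies $QH^*(M)_L$ with the local Jacobian ring at $(L(u),\xi)$, which is one-dimensional and generated by the idempotent $\be_u$. Since the open-closed map $i_{*,qm,(\xi,u)}$ factors through this local component, the image $i_{*,qm,(\xi,u)}(vol_{L(u)})$ must be a scalar multiple of $\be_u$, say
\[ i_{*,qm,(\xi,u)}(vol_{L(u)}) = c\cdot \be_u, \qquad c\in\Lambda. \]
Substituting into the proposition gives $Z(\xi)=c^2\langle \be_u,\be_u\rangle_{PD_M}$.

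Next, I would apply Theorem \ref{thm:vd} with $Q=\be_u$ and $h=vol_{L(u)}$ to obtain
\[ \langle i^{c*}_{qm,(\xi,u)}(\be_u), vol_{L(u)}\rangle_{PD_{L(u)}} = \langle \be_u, i_{*,qm,(\xi,u)}(vol_{L(u)})\rangle_{PD_M} = c\,\langle \be_u,\be_u\rangle_{PD_M}. \]
The key step is identifying $i^{c*}_{qm,(\xi,u)}(\be_u)$ with the Floer unit $\be_L\in HF((L(u),\xi),(L(u),\xi))$. This follows because the closed-open map is a unit-preserving ring homomorphism into the local ring $HF((L(u),\xi),(L(u),\xi))$, which in the Morse case is one-dimensional and sits as the summand of the spectral decomposition of $QH^*(M)$ picked out precisely by the critical point $(L(u),\xi)$. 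Therefore the complementary idempotents $\be_v$ for $v\ne u$ map to zero, while $\be_u$ must hit the unit. Consequently
\[ c\,\langle \be_u,\be_u\rangle_{PD_M} = \langle \be_L, vol_{L(u)}\rangle_{PD_{L(u)}} = \int_{L(u)} vol_{L(u)} = 1. \]
Combining this with $Z(\xi)=c^2\langle \be_u,\be_u\rangle_{PD_M}$ gives $Z(\xi)\cdot \langle \be_u,\be_u\rangle_{PD_M}=1$, which is the claim.

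The main obstacle is the identification $i^{c*}_{qm,(\xi,u)}(\be_u)=\be_L$. Conceptually this is transparent (it is the statement that the closed-open map respects the spectral decomposition at Morse critical points and sends idempotents to idempotents), but the argument uses in an essential way both the unitality of the closed-open map and the fact that $HF((L(u),\xi),(L(u),\xi))$ is $1$-dimensional at a Morse critical point, the latter being the place where the Morse hypothesis on $W_{L(u)}$ enters. In the non-Morse case, neither the localization argument nor the $1$-dimensionality is available, which is consistent with the fact that $Z(\xi)$ vanishes and the lemma makes no sense.
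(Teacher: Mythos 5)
Your proof is correct and follows essentially the same route as the paper: both use the one-dimensionality of the local component to write $i_{*,qm,(\xi,u)}(vol_{L(u)})$ as a multiple of $\be_u$, invoke the adjointness of Theorem \ref{thm:vd} together with the fact that the closed-open map is a unit-preserving algebra homomorphism to get the normalization $\langle \be_u, i_{*,qm,(\xi,u)}(vol_{L(u)})\rangle_{PD_M}=1$, and then combine with $Z(\xi)=\langle i_{*,qm,(\xi,u)}(vol_{L(u)}), i_{*,qm,(\xi,u)}(vol_{L(u)})\rangle_{PD_M}$. The only cosmetic difference is that you carry the scalar $c$ explicitly where the paper writes $i_*(vol_u)=\be_u/\langle\be_u,\be_u\rangle_{PD_M}$ directly.
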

\begin{proof}
Since $W$ is assumed to be a Morse function, its Jacobian ring is direct sum of field factors corresponding to each critical points
(determined by a torus fiber $L(u)$ and a bounding cochain $\xi$), and we denote this factor simply as $\be_{u}$
which can be identified as a class in $H(M;\Lambda_0)$.
From Theorem \ref{thm:vd}, we have
$$\langle \be_u, i_*(vol_u) \rangle_{PD_M} = \langle i^* \be_u, vol_u \rangle_{L(u)} =1.$$
Since $i^*$ is an algebra homomorphism, sending $\be_u$ to $\be_{L(u)}$, we get the last equality.
Hence $i_*(vol_u)$ may be set as $ \frac{\be_u}{\langle \be_u, \be_u \rangle_{PD_M}}$.
On the other hand, we have $Z =  \langle i_*(vol_u), i_*(vol_u) \rangle_{PD_M}$ from the previous proposition.
Hence, we get the lemma by combing these two results.
\end{proof}

\subsection{Computing $Z$-invariant}

We recall the computation of $Z$-invariant from \cite{FOOOtoric3} adapted to our setting(see Section \ref{sec:cmp}), and also follow notations thereof.
The first thing we need is to observe that Floer cohomology algebra is the Clifford algebra (\cite{Cho})

Recall that Clifford algebra $Cl(n,\vec{d})$ with $n$-generators and quadratic form $\vec{d} = \textrm{diag} (d_1,\cdots,d_n)$
is defined as $ \Lambda \ll e_1,\cdots, e_n \gg $ with relations $ e_i e_j + e_j e_i =0$ for $i \neq j$ and $e_i e_i = d_i$.
For the case of formal deformations $b = \sum x_i e_i$, we take partial derivatives of
$m(e^b) = W(b) \cdot \be$ to obtain (see \eqref{eq:dd}) $$ \frac{\partial^2 W}{\partial x_i \partial x_j} \cdot \be  = m_2^b(e_i,e_j) + m_2^b(e_j, e_i).$$
(Our mirror variable is from holonomy part only, and so this identity still holds for cyclic symmetric $\AI$-algebras.
In \cite{FOOOtoric3}, this does not hold since the bounding cochain $b^c$ may not be of degree one anymore.)
Here $\frac{\partial^2 W }{\partial x_i \partial x_j}$ is symmetric matrix and hence there exist a matrix $A$  such that
$$ A^T \frac{\partial^2 W }{\partial x_i \partial x_j} A = 2 \textrm{diag} (d_1,\cdots,d_n)$$
with $\det(A)=1$.
Therefore, by setting $e_i' = A_{ji}e_j$, $e_i$'s satisfiy the Clifford algebra relation.
And we have $e_1\wedge \cdots \wedge e_n = e_1' \wedge \cdots \wedge e_n'$.

Thus, we can define an algebra map  
$$ \Psi : Cl(n,\vec{d}) \to (H(L(u),\Lambda), \cup^Q), \Psi(e_i) = e_i'$$
To show that $\Psi$ is an isomorphism, it is enough to show that $\{e_i'\}$ generate cohomology algebra.
But since $$e_1' \cup^Q \cdots \cup^Q e_n' \equiv e_1' \wedge \cdots \wedge e_n' \;\; (\textrm{mod} \; \Lambda_+)$$
it is easy to see that $\{e_i'\}$ generate cohomology algebra $H(L(u))$ from the gapped filtered condition of Floer theory.

As explained in Section \ref{sec:cmp}, the proof of Cardy identity exploits cyclic symmetry of $\AI$-algebra. 
Fukaya-Oh-Ohta-Ono defines a quasi-isomorphic $\AI$-algebra, which has cyclic symmetry.
The isomorphism $\Phi: (H,\{m_{k,\beta}^{T^n,\rho} \}) \to (H,\{m_{k,\beta}^{c,\rho}\})$ satisfies $\Phi \equiv id  \;\; (\mod \Lambda_+)$,
and sends $m_2$-product $\cup^Q$ to $\cup^{Q,c}$.
Let us denote $\Phi(e_i') = e_i'' $. As $e_i''$ are cohomology classes in $H(L(u)) = HF(L(u))$, 
$e_1'' \cup^{Q,c} \cdots \cup^{Q,c} e_n''$ should be a cohomology class in $H(L(u))$.
We denote its top degree component as $ \big(e_1'' \cup^{Q,c} \cdots \cup^{Q,c} e_n''\big)^{(n)}$

\begin{definition}
We define $c_L \in \Lambda_0$ as 
$$
 \big(e_1'' \cup^{Q,c} \cdots \cup^{Q,c} e_n''\big)^{(n)} = c_L \cdot  e_1 \wedge \cdots \wedge e_n$$
Here $c_L = 1+ \sigma$ for some $\sigma \in \Lambda_+$ by construction.
\end{definition}
Now, let us explain how Fukaya-Oh-Ohta-Ono compute $Z$-invariant from $\{e_1'',\cdots, e_n''\}$.
We may set $$e_I'' = e_{i_1}'' \cup^{Q,c} \cdots \cup^{Q,c} e_{i_k}''$$
for $I = \{i_1,\cdots, i_k\}$ with $i_1 \leq \cdots \leq i_k$.
Note that we have 
$$e_I'' \cup e_{\{1,\cdots,n\}}'' = \pm (\prod_{ i \in I} d_i) e_{I^c}''$$

Here is the computation of $Z$-invariant(we refer readers to \cite{FOOOtoric3} for the signs).
Set \[\langle e_I'', e_J'' \rangle = g_{IJ}\] and $g^{IJ}$ to be its inverse.

\begin{eqnarray*}
\sum \pm g^{IJ} \langle  m_2(e_I'', e_{\{1,\cdots,n\}}''), m_2(e_J'', e_{\{1,\cdots,n\}}'') \rangle 
 &= & \sum \pm g^{IJ} \langle  m_2\big( e_{\{1,\cdots,n\}}'', m_2(e_J'', e_{\{1,\cdots,n\}}'')\big), e_I'' \rangle \\
 &=& \sum \pm g^{IJ} \langle (\prod_{ i \in J} d_i) m_2\big( e_{\{1,\cdots,n\}}'',  e_{J^c}'' \big), e_I'' \rangle  \\
  &=&  \sum \pm g^{IJ} \langle (\prod_{ i \in J} d_i) (\prod_{ i \in J^c} d_i)\langle e_J'', e_I'' \rangle \\
&= & 2^n d_1\cdots d_n.
\end{eqnarray*}

From Theorem \ref{fooothm1}, we have (since  $\det(Hess (W) )= 2^n d_1\cdots d_n$)
\begin{equation}\label{eq:ii}
 \langle i_*( e_{\{1,\cdots,n\}}''),  i_*( e_{\{1,\cdots,n\}}'') \rangle_{PD_X} =  \det(Hess (W) ).
 \end{equation}

Now, we prove the proposition \ref{prop:m},
$$\langle v,w \rangle_{PD_M} =\big( c_L\big)^2 \langle \ks(v),  \ks(w) \rangle_{res}. $$
\begin{proof}
By theorem \ref{thm:vd}, the LHS of \eqref{eq:ii} only depends on degree $n$ part of $e_{\{1,\cdots,n\}}''$. So we have
$$(c_L)^2  \cdot  \langle i_*(vol_u), i_*(vol_u) \rangle_{PD_M} = (c_L)^2 \cdot Z $$
So equation  \eqref{eq:ii}  is the same as  $(c_L)^2 \cdot Z =  \det(Hess (W) )$. 
Therefore, we have
$$\langle 1_u, 1_u \rangle_{PD_M}  = \frac{1}{Z(L(u),b)} =  \frac{(c_L)^2}{ \det(Hess (W) )} = (c_L)^2 \langle 1_u,1_u \rangle_{res}. $$
\end{proof}

\section{Mirror symmetry of residue pairings: $\PP^1_{3,3,3}$}\label{sec:333}
Let $\tilde{X}=\C/(\Z\oplus e^{2\pi i/3}\Z)$ be a symplectic torus, and consider a $\Z/3$-action on $\tilde{X}$ as the rotation by $2\pi/3$. Its $\Z/3$-quotient is an orbifold sphere $\PP^1_{3,3,3}$ with three $\Z/3$-orbifold point.
There is an immersed Lagrangian $\LL$, called {\em Seidel Lagrangian} in  $\PP^1_{3,3,3}$
whose  Floer potential is the cubic polynomial $ W_\LL=\phi(x^3-y^3+z^3)+\psi xyz$ for some $\phi,\psi \in \Lambda$.
Homological mirror symmetry between $\PP^1_{3,3,3}$ and $W_\LL$ using localized mirror functor has been 
proved in \cite{CHL}(see below for more details).
Closed mirror symmetry between quantum cohomology of  $\PP^1_{3,3,3}$ and $Jac (W_\LL)$ is a work in progress of
Amorim-Cho-Hong-Lau.

In this section we prove the following theorem.
\begin{theorem}\label{thm:9}
The Poincar\'e pairing for quantum cohomology and the standard residue pairing of Jacobian ring are conformally equivalent.
\begin{equation}\label{rescomm}
\xymatrix{
QH^*(\PP^1_{3,3,3})\otimes QH^*(\PP^1_{3,3,3}) \ar[rr]^-{\langle\;,\;\rangle_{PD}} \ar[d]_{\ks \otimes \ks}& &\Lambda \ar@{=}[dd] \\
Jac(W_\LL) \otimes Jac(W_\LL) \ar[d]_{I \otimes I} &&\\
Jac(W_\LL) dx dy dz \otimes Jac(W_\LL) dx dy dz    \ar[rr]^-{\langle\;,\;\rangle_{res}} & &\Lambda}
\end{equation}
Here, $I$ is given by multiplication of $c_\LL dx dy dz$, where 
\[ c_\LL=\sum_{k\geq 0} (-1)^k \big(q^{(6k+1)^2}-q^{(6k+5)^2}\big)\]
with $q := T^{\omega(\Delta)}$, where $\omega(\Delta)$ is the area of minimal triangle.
\end{theorem}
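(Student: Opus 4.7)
The plan is to deduce Theorem \ref{thm:9} in two steps. First, I would establish the commutativity of the residue-pairing diagram \eqref{rescomm} by invoking the multi-crescent Cardy framework of this paper, specialized to the orbifold $\PP^1_{3,3,3}$ with the Seidel Lagrangian. Second, I would explicitly compute the conformal factor $c_\LL$ by enumerating the rigid holomorphic polygons contributing to the Floer volume class $[Z,Y,X]^{(3)}$.

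For the commutativity, the key inputs are that the Kodaira-Spencer map $\ks \colon QH^*(\PP^1_{3,3,3}) \to Jac(W_\LL)$ has been constructed in the work in progress of Amorim-Hong-Lau and the first author, and that the Cardy identity on pseudo-holomorphic annuli carries over from the toric setting of \cite{FOOOtoric3} to this orbifold sphere. The lower-left triangle of the sandwich diagram \eqref{diagram7} commutes automatically by Theorem \ref{mainthm} and Corollary \ref{cor:ll}. The upper-right triangle commutes by the orbifold analogue of the annular cobordism argument in \cite{FOOOtoric3}. Concatenating these with the adjointness \eqref{coad}, and noting that the $B$-invariant encodes the Lagrangian Floer pairing while Kapustin-Li reduces to the complex-geometric residue on $Jac(W_\LL)\,dx\,dy\,dz$, one obtains $\langle v,w\rangle_{PD_M} = c_\LL^2 \langle \ks(v), \ks(w)\rangle_{res}$, which is precisely \eqref{rescomm}. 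An alternative, more explicit route is to compute $\ks$ on a basis of $QH^*(\PP^1_{3,3,3})$ directly, exploit the Morse structure of the cubic potential $W_\LL=\phi(x^3-y^3+z^3)+\psi xyz$ at its isolated critical points, and match the residue pairing to the Poincar\'e pairing term by term up to the overall factor $c_\LL^2$.

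The core technical step is the computation of $c_\LL$. By Definition \ref{defn:fv}, $c_\LL$ is the coefficient extracted from the top-degree component of $[Z,Y,X]=m_2^b(Z,m_2^b(Y,X))$, where $X,Y,Z$ are the three odd immersed generators of $CF(\LL,\LL)$. Expanding each $m_2$ as a sum over Floer triangles with immersed corners, the degree-$3$ contributions arise from pairs of holomorphic triangles glued along an even generator $\bar X$, $\bar Y$, or $\bar Z$ into rigid quadrilaterals on $\LL$. Lifting to the universal cover $\tilde X=\C/(\Z\oplus e^{2\pi i/3}\Z)$, the preimage of $\LL$ is a triangular line arrangement, and the contributing rigid quadrilaterals correspond to specific triangles in this arrangement whose vertices lie on a one-dimensional sublattice selected by the $\Z/3$-symmetry of the Seidel Lagrangian. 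The areas in this family come out to $(6k+1)^2\,\omega(\Delta)$ and $(6k+5)^2\,\omega(\Delta)$ for $k\geq 0$, corresponding to residues $\pm 1 \pmod 6$, and tracking the $\AI$-signs assembles these contributions into
\[
c_\LL = \sum_{k\geq 0}(-1)^k\bigl(q^{(6k+1)^2}-q^{(6k+5)^2}\bigr), \qquad q=T^{\omega(\Delta)}.
\]

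The main obstacle is the combinatorial classification and sign verification in this last step: showing that exactly the configurations with areas $(6k\pm 1)^2\omega(\Delta)$ contribute, ruling out all other polygons (in particular, checking that triangles in other residue classes cancel or fail the rigidity condition), and confirming that the $\AI$-signs yield precisely the alternating pattern. This is the content of Lemma \ref{lem:cl}, and once it is in place, the full theorem follows by assembling the diagrammatic identities described above.
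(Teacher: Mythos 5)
There is a genuine gap in the main route you propose for the pairing comparison. The paper does \emph{not} prove Theorem \ref{thm:9} by running the Cardy/annulus machinery: for $\PP^1_{3,3,3}$ the orbifold Cardy identity is only assumed as work in progress, and the additional ingredients that make the toric argument close up (the annulus degeneration formula of Theorem \ref{fooothm1}, the Clifford-algebra computation of the $Z$-invariant, and the identity $\langle i_*(vol), i_*(vol)\rangle_{PD_M}=\det\mathrm{Hess}(W)$ up to $c_L^2$) have no established analogue for the immersed Seidel Lagrangian. Even granting the open-string Cardy identity, it does not by itself yield the closed-string statement $\langle v,w\rangle_{PD}=c_\LL^2\langle \ks(v),\ks(w)\rangle_{res}$; the paper explicitly says the analogue of diagram \eqref{diagramfooo7} is not known in general and is verified here by direct computation. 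What the paper actually does (Proposition \ref{prop:isoks}) is your one-sentence ``alternative route'': it uses the explicit formula $\ks(PD(pt))=\tfrac18 q\partial_q\phi\,(x^3-y^3+z^3)+\tfrac18 q\partial_q\psi\, xyz$ from \cite{CHL}, builds an explicit matrix $C$ with $C\cdot(\partial_xW,\partial_yW,\partial_zW)^T=(x^4,y^4,z^4)^T$ to evaluate the Grothendieck residue, and reduces everything to the single power-series identity
\begin{equation*}
c_\LL^2\Bigl(\phi\, q\tfrac{\partial \psi}{\partial q}-\psi\, q\tfrac{\partial \phi}{\partial q}\Bigr)=8\phi(27\phi^3-\psi^3),
\end{equation*}
which is the real content of the theorem and is proved in Appendix \ref{app2} by identifying $\phi,\psi,c_\LL$ with theta series, showing both sides lie in $M_6(\Gamma_0(576))$, and invoking Sturm's bound. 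Your proposal never surfaces this identity or anything playing its role, so the crux of the proof is missing.

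On the computation of $c_\LL$: your final formula and the areas $(6k+1)^2\omega(\Delta)$, $(6k+5)^2\omega(\Delta)$ agree with Lemma \ref{lem:cl}, but the mechanism you describe is off. The top-degree coefficient does not come from rigid quadrilaterals obtained by gluing two nonconstant triangles; by degree reasons the nonconstant trapezoids in $m_{2,+}^b(\bar X,X)$ only produce the lower-degree terms $c_1xX+c_2yY+c_3zZ$, which Lemma \ref{lem:degb} then shows are killed in the Jacobian ring. The coefficient $c_\LL$ is simply the signed, area-weighted count of holomorphic triangles with corners $(Z,Y,X)$, composed with the \emph{constant} triangle $m_{2,0}(\bar X,X)=vol_\LL$. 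This is a fixable misdescription, but as written your enumeration targets the wrong moduli space.
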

%
%Observe also that $QH^*(\PP^1_{3,3,3})$ and $Jac(W_\LL)$ are both Frobenius algebras, where $QH^*(\PP^1_{3,3,3})$ has Poincar\'{e} duality pairing while $Jac(W_\LL)$ has residue pairing, and they both satisfy Frobenius algebra axioms. We might ask the following.
We first recall that $\LL$ is constructed as a quotient of 
an embedded circle  $\widetilde{\LL}_1$ in  $\tilde{X}$.
Consider the image of $\iota: \R \to \C$ be a map \[ t \mapsto \frac{1+e^{2\pi \sqrt{-1}/3}}{2}+\sqrt{-1}t\] and $pr: \C \to \tilde{X}$ be the quotient map. Then $pr\circ \iota$ defines an embedded circle $\widetilde{\LL}_1$.
The immersed Lagrangian $\LL$ is obtained from $\widetilde{\LL}_1$ projecting to the quotient $\PP^1_{3,3,3}$.
The preimage of $\LL$ in $\tilde{X}$ is given by three embedded circles, which are  $\Z/3$-action images of $\widetilde{\LL}_1$. Furthermore, we equip $\LL$ with $\C$-bundle the holonomy $\lambda=e^{2 \pi \sqrt{-1}}$ (or non-trivial spin structure)
represented by a point on $\LL$ so that we reverse the sign whenever the boundary passes through this point.

%\begin{figure}
%\includegraphics[height=2in]{immersedLag.png}
%\caption{An immersed Lagrangian $\LL$ in $\PP^1_{3,3,3}$ and degree 1 immersed sectors.}
%\end{figure}
%

Then we have three degree 1 immersed sectors $X$, $Y$ and $Z$ for Floer homology of $\LL$ in $\PP^1_{3,3,3}$. It is proved by \cite{CHL} that their linear combinations are weak bounding cochains:
\begin{theorem}\cite{CHL}
Let $b:=xX+yY+zZ$, where $x$, $y$ and $z$ are formal variables. Then the Landau-Ginzburg potential is given by
\[ W_\LL(b)=\phi(x^3-y^3+z^3)+\psi xyz \in \Lambda[x,y,z]\]
where $\phi$ and $\psi$ are formal power series of $q$ as follows:
\[ \phi(q)=\sum_{k\in \mathbb{Z}} (-1)^{k+1}(k+\frac{1}{2})q^{(6k+3)^2},\]
\[ \psi(q)=\sum_{k \in \mathbb{Z}} (-1)^{k+1} (6k+1)q^{(6k+1)^2}.\]
\end{theorem}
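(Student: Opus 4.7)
The plan is to combine the general Cardy-identity framework developed earlier in the paper with a direct computation of the Floer-volume ratio $c_\LL = vol^{Floer}/vol_\LL$ of Definition \ref{def:fv}. The algebraic arguments of Theorem \ref{mainthm} and Theorem \ref{thm:kscommute} are insensitive to the toric assumption; the Cardy identity via pseudo-holomorphic annuli extends from toric manifolds to $\PP^1_{3,3,3}$ as noted in the introduction; and the Kodaira-Spencer ring isomorphism $\ks: QH^*(\PP^1_{3,3,3})\to Jac(W_\LL)$ is supplied by the work in progress of Amorim-Cho-Hong-Lau. Given these inputs, the proof of Proposition \ref{prop:m} transplants to the present setting and yields the commutativity of \eqref{rescomm} with $I$ given by multiplication by $c_\LL\,dx\,dy\,dz$, and it remains to identify $c_\LL$ with the stated theta-like series.

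By Definition \ref{defn:fv}, $c_\LL \cdot vol_\LL = (-1)^{n+1}[Z,Y,X]^{(n)}$ with $n=3$, where $vol_\LL = X\cup Y\cup Z$ is the classical top cup product and $[Z,Y,X]^{(n)}$ is the degree-$n$ component of the iterated $m_2^b$-product $m_2^b(Z, m_2^b(Y,X))$ for $b = xX+yY+zZ$. Extracting the coefficient of $vol_\LL$ singles out those holomorphic polygons on $\PP^1_{3,3,3}$ with boundary on $\LL$ whose only corners are the three fixed inputs $X,Y,Z$; discs with additional $b$-insertions either contribute to lower-degree components of $HF(\LL,\LL)$ or are absorbed by the Jacobian relations modulo which $c_\LL$ is well-defined.

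I would carry out this enumeration on the universal cover $\C\to\tilde X\to \PP^1_{3,3,3}$, where $\LL$ lifts to a $\mathbb{Z}/3$-symmetric union of three families of parallel lines. Holomorphic triangles in $\PP^1_{3,3,3}$ with the specified corner data $(X,Y,Z)$ lift to discrete families of lattice triangles in $\C$ bounded by these lines. Direct area computation organizes these into two orbits indexed by $k\in\mathbb{Z}_{\geq 0}$, of symplectic areas $(6k+1)^2\omega(\Delta)$ and $(6k+5)^2\omega(\Delta)$ respectively, where $\omega(\Delta)$ is the area of the minimal triangle, explaining the exponents of $q$ in $c_\LL$. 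The alternating sign $(-1)^k$ arises from a combination of the non-trivial holonomy $\lambda = e^{2\pi\sqrt{-1}}$ (equivalently, the non-trivial spin structure), counting sign flips at each boundary crossing of the marked point, together with Koszul sign conventions in the iterated $m_2^b$. Summation then yields $c_\LL = \sum_{k\geq 0}(-1)^k\big(q^{(6k+1)^2}-q^{(6k+5)^2}\big)$.

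The main obstacle lies in the sign and combinatorial bookkeeping: carefully enumerating each orbit of triangles in the universal cover, verifying that extra corners carrying $b$-insertions do not contribute to $c_\LL$ (or else cancel modulo Jacobian relations), and checking that the various signs conspire to produce the alternating theta-pattern. The methodology parallels the computation of $W_\LL = \phi(x^3-y^3+z^3) + \psi xyz$ in \cite{CHL}, but applied to mixed-type corner configurations $(X,Y,Z)$ rather than the uniform single-sector configurations there. A consistency check is that the leading term in $q$ (from the single minimal triangle with $k=0$) gives $c_\LL \equiv 1 \pmod{\Lambda_+}$, agreeing with the general expectation that the conformal factor reduces to unity in the classical limit.
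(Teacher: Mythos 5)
There is a genuine gap: your proposal does not prove the statement at hand. The statement to be proved is the formula for the potential itself, namely that $W_\LL(b)=\phi(x^3-y^3+z^3)+\psi xyz$ with the explicit series $\phi(q)=\sum_{k\in\Z}(-1)^{k+1}(k+\frac12)q^{(6k+3)^2}$ and $\psi(q)=\sum_{k\in\Z}(-1)^{k+1}(6k+1)q^{(6k+1)^2}$; this is the result the paper imports from \cite{CHL} without reproving it. What you outline instead is the computation of the conformal factor $c_\LL$ of Definition \ref{def:fv}, i.e.\ Lemma \ref{lem:cl}, together with a sketch of Theorem \ref{thm:9}. These are different moduli counts. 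The potential $W_\LL$ is extracted from $m(e^b)=W_\LL(b)\cdot\be_\LL$, so it counts rigid triangles \emph{all} of whose corners are $b$-insertions (inputs), with output a multiple of the unit; the resulting series carry nontrivial multiplicities --- $\psi$ has weight $(6k+1)$ at area $(6k+1)^2\omega(\Delta)$, and $\phi$ is supported on areas $(6k+3)^2\omega(\Delta)$. Your enumeration (two inputs $Z,Y$, one output $\bar X$, multiplicity one, areas $(6k+1)^2$ and $(6k+5)^2$) produces $c_\LL=\sum_{k\geq 0}(-1)^k\bigl(q^{(6k+1)^2}-q^{(6k+5)^2}\bigr)$, which is neither $\phi$ nor $\psi$: the exponents $(6k+3)^2$ never appear in it, and the weights $(6k+1)$ are absent. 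So even if every sign and area in your count were verified, you would have proved Lemma \ref{lem:cl}, not the theorem.

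Two further problems would remain even after redirecting the count. First, your framework is circular for this purpose: the Cardy identity, the map $\ks$, Proposition \ref{prop:m} and the Jacobian ring $Jac(W_\LL)$ all presuppose that $W_\LL$ has already been computed (indeed the paper's later use of $\ks(PD(pt))=\frac{q}{8}\partial_q\phi\,(x^3-y^3+z^3)+\frac{q}{8}\partial_q\psi\, xyz$ takes the series $\phi,\psi$ as given), so none of this machinery can be an input to the proof of the statement. Second, the statement implicitly requires that $b=xX+yY+zZ$ is a weak bounding cochain --- otherwise $W_\LL(b)$ is not even defined --- and your proposal never addresses unobstructedness. Finally, your closing remark that the computation in \cite{CHL} concerns only ``uniform single-sector configurations'' is incorrect: the coefficient $\psi$ there is exactly the mixed-corner $(X,Y,Z)$ count, with all three corners as inputs. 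The actual proof (carried out in \cite{CHL}) establishes weak unobstructedness of $b$ by a degree/symmetry argument, then enumerates all rigid holomorphic triangles with corners at the immersed sectors by lifting $\LL$ to the cover $\C\to\tilde X\to\PP^1_{3,3,3}$, organizing them by area and multiplicity and fixing signs via the nontrivial spin structure; that enumeration, not the $[Z,Y,X]$ count, yields $\phi$ and $\psi$.
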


Using the family $(\LL,b)$, we have the closed string mirror symmetry as follows.
\begin{theorem}[Amorim-Cho-Hong-Lau]
There exists a Kodaira-Spencer map  $$\ks: QH^*(\PP^1_{3,3,3}) \to Jac(W_\LL)$$ which is a ring isomorphism.
\end{theorem}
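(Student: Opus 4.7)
The proof plan follows the toric strategy of Fukaya--Oh--Ohta--Ono \cite{FOOOtoric3}, adapted to the orbifold/immersed setting of $\PP^1_{3,3,3}$ with Seidel Lagrangian $\LL$. For a Chen--Ruan cycle representative of $w \in \QH^*(\PP^1_{3,3,3})$, I would define $\ks(w)$ by counting pseudo-holomorphic orbi-discs $u:(D,\partial D)\to(\PP^1_{3,3,3},\LL)$ with one interior marked point decorated by $w$ (permitting an orbifold marking matching the twisted sector of $w$) and arbitrarily many boundary marked points inserted with the weak Maurer--Cartan element $b = xX+yY+zZ$. Pairing the boundary output against $\be_\LL$ via Poincar\'e duality on $\LL$ and weighting by $T^{\omega(u)}$ together with holonomy and spin-structure signs produces a formal series in $\Lambda\ll x,y,z\gg$, whose image modulo the Jacobian ideal is the proposed value $\ks(w) \in \Jac(W_\LL)$.

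\emph{Well-definedness, unit and multiplicativity.} Independence of the cycle representative and of the perturbation data follows from the boundary analysis of the associated one-dimensional orbi-disc moduli: disc-bubble degenerations in which a sub-disc absorbs the interior insertion are controlled by the differentiated Maurer--Cartan equation $m_1^b(X_i) = (\partial_{x_i} W_\LL)\be_\LL$, and so contribute only Jacobian-ideal terms. The fundamental class maps to $1$ since the only orbi-disc with fundamental class insertion that pairs non-trivially with $\be_\LL$ is the constant one. Multiplicativity I would deduce from the one-parameter moduli of discs with two interior insertions $w_1,w_2$: one boundary stratum, the nodal collision of the two interior points, yields the quantum product $w_1 *_Q w_2$; the complementary stratum, splitting along a boundary arc, yields $m_2^b(\ks(w_1)\be_\LL,\ks(w_2)\be_\LL) \equiv \ks(w_1)\ks(w_2)$ modulo Jacobian ideal. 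The resulting cobordism forces $\ks(w_1 *_Q w_2) = \ks(w_1)\ks(w_2)$ in $\Jac(W_\LL)$.

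\emph{Bijectivity.} Both sides are free $\Lambda$-modules of rank $8$: the Chen--Ruan cohomology of $\PP^1_{3,3,3}$ contributes $1+1+2\cdot 3$ (ordinary unit and point class, plus two twisted sectors at each of the three $\Z/3$-orbifold points), while the Hesse-type cubic $W_\LL = \phi(x^3-y^3+z^3)+\psi xyz$ has an isolated singularity at the origin with $\dim_\Lambda \Jac(W_\LL) = (3-1)^3 = 8$, once $\phi$ and $\psi$ are checked to be Novikov units in the relevant range (visible from their leading theta-series terms). It therefore suffices to prove surjectivity. I would compute $\ks$ explicitly on the twisted-sector generators, showing that the three fundamental twisted classes, one at each $\Z/3$-orbifold point, map to $x$, $y$, $z$ up to Novikov units by counting the unique minimal-area orbi-disc with a single interior orbifold marking of isotropy $1/3$ at that orbifold point and no boundary $b$-insertion. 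Combined with multiplicativity, this produces all monomials in $x,y,z$ in the image, and the rank match promotes surjectivity to bijectivity via a Nakayama argument on the $T$-adic filtration.

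\emph{Main obstacle.} The principal difficulty is the explicit enumeration of these minimal orbi-discs and the verification that their leading Novikov coefficients are units of the same theta-series form as $\phi$ and $\psi$. Equivalently, one has to match the Chen--Ruan Gromov--Witten quantum product on twisted sectors with the multiplication law in $\Lambda[x,y,z]/(\partial W_\LL)$ to leading Novikov order, which requires the full orbi-disc open Gromov--Witten package developed for $\PP^1_{a,b,c}$ (transversality through a $T^n$-equivariance substitute, compatible Kuranishi structures for orbifold strata, and cyclic symmetric perturbations to preserve the Maurer--Cartan structure). Once this leading-order identification is established, the remainder of the argument---well-definedness, multiplicativity, and promoting surjectivity to an isomorphism---is formal.
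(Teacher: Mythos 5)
The paper contains no proof of this statement: it is imported verbatim, with attribution, from the Amorim--Cho--Hong--Lau work in progress (the paper says so in the introduction and again just before the theorem), and it is then used as an input to Theorem \ref{thm:9}. So there is no in-paper argument to measure your proposal against; the only benchmark is the strategy the paper gestures at, namely the Fukaya--Oh--Ohta--Ono construction of $\ks$ for toric manifolds \cite{FOOOtoric3} transplanted into the orbi-disc, immersed framework of \cite{CHL}. Your outline is the natural reconstruction of that strategy, and your bookkeeping is sound: Chen--Ruan rank $1+1+3\cdot 2=8$, Milnor number $(3-1)^3=8$ for the cubic, twisted sectors of Chen--Ruan degree $2/3$ going to $x,y,z$ up to units, and surjectivity plus equal finite rank over the field $\Lambda$ upgrading to bijectivity.

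As a proof, however, your plan has a genuine gap beyond the orbi-disc enumeration you flag as the ``main obstacle.'' You define $\ks(w)$ by pairing the boundary output of the bulk-inserted disc against $\be_\LL$, and your well-definedness and multiplicativity arguments (disc-bubbling controlled by $m_1^b(X_i)=(\partial_{x_i}W_\LL)\be_\LL$, and the two-interior-point degeneration) silently require that the \emph{entire} boundary output of the closed-open map be a multiple of the unit, i.e.\ bulk weak unobstructedness --- not merely that one can project onto a distinguished component. The paper is explicit that this is ``one of the most important ingredients for the Kodaira-Spencer map and it does not necessarily hold for general Lagrangian submanifold'': in the toric case it is forced by $T^n$-equivariance of the perturbations, but the Seidel Lagrangian admits no torus action, and your proposal offers no substitute (in the Cho--Hong--Lau setting it comes from the anti-symplectic involution symmetry of $\LL$ combined with a compatible, cyclically symmetric perturbation scheme). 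Without this, $\ks$ is not even well defined as a ring map, so the part of your argument you call ``formal'' does not get off the ground. A smaller soft spot: the rank count $\dim_\Lambda Jac(W_\LL)=8$ needs the cubic $\phi(x^3-y^3+z^3)+\psi xyz$ to have an isolated singularity, i.e.\ nonvanishing of the discriminant-type quantity $\phi(27\phi^3-\psi^3)$ appearing in Proposition \ref{prop:isoks}, not just nonvanishing of $\phi$ and $\psi$; this does hold here (the valuations $1$ of $\psi$ and $9$ of $\phi$ preclude cancellation), but it is this quantity, not unithood of the coefficients, that must be checked.
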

\begin{remark}
In fact, in the above work in progress, they discuss all cases $\PP^1_{a,b,c}$ with bulk deformation by twisted sectors.
\end{remark}

Let $1_{\PP^1_{3,3,3}},\; PD(pt) \in QH^*(\PP^1_{3,3,3})$ be Poincar\'{e} duals of the fundamental class and the point class of $\PP^1_{3,3,3}$, respectively. Then 
\[ \langle 1_{\PP^1_{3,3,3}},PD(pt)\rangle_{PD}=\int_{\PP^1_{3,3,3}}1_{\PP^1_{3,3,3}} \cup_Q PD(pt)=1.\] In \cite{CHL}, it is shown that 
\[\ks(1_{\PP^1_{3,3,3}})=1,\; \ks(PD(pt))=\frac{1}{8}\big(q \frac{\partial W_\LL}{\partial q}\big)=\frac{q}{8}\cdot \frac{\partial \phi}{\partial q}(x^3-y^3+z^3)+\frac{q}{8}\cdot\frac{\partial \psi}{\partial q}xyz.\]

We may compute and check  $\langle \ks(1_{\PP^1_{3,3,3}}),\ks(PD(pt))\rangle_{res} \neq 1$  that Kodaira-Spencer map $\ks$ does not preserve the pairings. 

Let $L$ be an immersed Lagrangian which is same as $\LL$ as underlying manifolds, with bounding cochain $\xi=0$.
We can define $B$-invariant as in section \ref{sec:B}.
As explained in previous sections, we may consider the diagram
\begin{equation}\label{cardy}\xymatrix{
HF(L,L) \ar[r]^{OC} \ar[rdd]_{B} & QH^*(\PP^1_{3,3,3}) \ar[d]^{\ks} \\
& Jac(W_\LL)\ar[d]^I \\
& Jac(W_\LL) dx dy dz}\end{equation}
We define the module isomorphism $I$ as in Theorem \ref{thm:kscommute} using the ratio $c_\LL$ of Floer volume class and classical volume class .  Namely, we set 
\[ [Z,Y,X]^{(top)}=c_\LL vol_\LL\]
for some $c_\LL \in \Lambda_0$ (recall that $[Z,Y,X]=m_2^b(m_2^b(Z,Y),X)$). The notation $(top)$ means that we project $[Z,Y,X]$ to the top degree component.

\begin{lemma}\label{lem:cl}
\[ [Z,Y,X]=c_\LL vol_\LL+ c_1 xX+ c_2 yY+ c_3 zZ\] for some $c_\LL,c_1,c_2,c_3 \in \Lambda_0$, where
\[c_\LL=\sum_{k\geq 0} (-1)^k \big(q^{(6k+1)^2}-q^{(6k+5)^2}\big)=\sum_{k \in \mathbb{Z}} (-1)^k q^{(6k+1)^2}.\]
\end{lemma}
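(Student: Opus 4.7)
The plan is to verify both the form of the decomposition and to compute $c_\LL$ directly by enumerating $J$-holomorphic polygons, following the strategy of \cite{CHL} used to derive $W_\LL$. First I would establish the shape of the expansion: by degree-parity considerations together with the $\Z/3$-rotational symmetry of the Seidel Lagrangian (which cyclically permutes $X,Y,Z$ and $x,y,z$), the only components of $[Z,Y,X]$ that can appear are a constant multiple of $vol_\LL$ and the three linear-in-$b$ terms $c_1 xX,\, c_2 yY,\, c_3 zZ$; components in $\mathbf 1_\LL, \overline X, \overline Y, \overline Z$, cross terms like $xY$, and higher-order terms in $b$ are all ruled out by matching degrees and the $\Z/3$-action on the Floer complex.

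Next I would reduce $c_\LL$ to a triangle count. Since $c_\LL\in\Lambda_0$ is the constant-in-$(x,y,z)$ part of the $vol_\LL$-coefficient, one may set $b=0$, so $c_\LL=\langle m_2(m_2(Z,Y),X),\mathbf 1_\LL\rangle_{PD_\LL}$. Cyclicity of the $\AI$-structure together with the unit axiom reduce this to $c_\LL=\langle m_2(Z,Y), X\rangle_{PD_\LL}$, which is the signed count of rigid $J$-holomorphic triangles with boundary on $\LL$ whose three corners realize the immersed sectors $Z, Y, X$ in the cyclic order prescribed by the $\AI$-product.

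Third I would enumerate the triangles via the universal cover. The preimage of $\LL$ in $\tilde X=\C/\Lambda$ is the $\Z/3$-orbit of $\widetilde\LL_1$, three embedded circles whose further lift to $\C$ consists of three families of parallel lines triangulating $\C$ into equilateral triangles of area $\omega(\Delta)$. A rigid triangle in $\PP^1_{3,3,3}$ with one corner of each type $X,Y,Z$ corresponds to a triangle in $\C$ of integer side length $n$ in units of minimal triangles, contributing $q^{n^2}$. Matching the corner labels under the $\Z/3$-covering action together with the translation lattice forces $n\equiv\pm 1\pmod 6$: multiples of $2$ are excluded by the nontrivial spin structure (holonomy $\lambda=-1$ at the marked point), and multiples of $3$ by the failure of the three corners to hit three distinct orbifold cone points. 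The contributing sizes are therefore exactly $n=6k+1$ and $n=6k+5$ for $k\geq 0$.

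Finally I would carry out the sign analysis. Each triangle carries a spin-structure weight $(-1)^\#$ counting boundary crossings of the marked point on $\widetilde\LL_1$, together with Koszul/orientation signs from the $\AI$-composition. A calculation parallel to the derivation of $\psi=\sum_k(-1)^{k+1}(6k+1)q^{(6k+1)^2}$ in \cite{CHL}, but without the multiplicity factor $6k+1$ (since here a single cyclic arrangement of corners is singled out by the $\AI$-composition rather than summed over), yields contribution $(-1)^k q^{(6k+1)^2}$ for $n=6k+1$ and $-(-1)^k q^{(6k+5)^2}$ for $n=6k+5$. Summing produces the first form of $c_\LL$; the reindexing $k\mapsto -k-1$ in the second series identifies $\{-(6k+5):k\geq 0\}=\{6(-k-1)+1:k\geq 0\}$ and combines both series into $\sum_{k\in\Z}(-1)^k q^{(6k+1)^2}$. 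The main obstacle is precisely this sign bookkeeping: verifying that the Koszul signs from iterated $m_2\circ m_2$ interact with the spin-structure holonomies to yield exactly the alternating pattern $(-1)^k$ with the sign flip between the $(6k+1)$- and $(6k+5)$-size triangles. The geometric classification itself is immediate from the hexagonal tiling picture of \cite{CHL}.
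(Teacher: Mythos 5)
Your overall strategy coincides with the paper's: decompose $[Z,Y,X]$ by first computing $m_2^b(Z,Y)$, reduce $c_\LL$ to a signed count of rigid triangles with corners at the immersed sectors $Z,Y,X$, and enumerate those triangles in the cover with areas $q^{(6k+1)^2}$, $q^{(6k+5)^2}$ and alternating spin-structure signs. But two steps contain genuine gaps. First, your stated reason for discarding triangles of even side length --- that ``multiples of $2$ are excluded by the nontrivial spin structure'' --- cannot be correct: a nontrivial spin structure (equivalently the holonomy $-1$) only multiplies the contribution of a rigid polygon by $\pm1$; it never removes a polygon from the count. The true reason only $n\equiv\pm1\pmod 6$ occurs is the corner and branch structure of the lifted Lagrangian: only those side lengths produce triangles whose three corners sit at the three distinct immersed sectors with the convexity and cyclic order required for $\langle m_2(Z,Y),X\rangle$ (side lengths $\equiv 3\pmod 6$ have all corners at one sector and feed into $\phi$; even side lengths bound no rigid polygon with the prescribed corners). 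This is what the paper reads off from Figure \ref{fig:cL}. Second, the sign pattern --- $(-1)^k$ for $n=6k+1$ and $-(-1)^k$ for $n=6k+5$ --- is the entire content of the closed formula for $c_\LL$, and you assert it rather than derive it, while yourself flagging it as ``the main obstacle.'' Note that your claimed signs are \emph{opposite} to those of the corresponding terms of $\psi=\sum(-1)^{k+1}(6k+1)q^{(6k+1)^2}$, so ``a calculation parallel to the derivation of $\psi$'' does not by itself justify them, and neither the sign flip nor the disappearance of the multiplicity $6k+1$ is explained. The paper settles this by applying Seidel's explicit sign rules to the two families of triangles in Figure \ref{fig:cL}: all signs are positive before the spin twist, and the number of passages through the marked point alternates with $k$, with the two families contributing with opposite overall sign.

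A smaller point: your appeal to $\Z/3$-symmetry to rule out cross terms such as $xY$ is not airtight, since the cyclic rotation does not preserve the expression $[Z,Y,X]$ (it sends it to a differently ordered product), and the symmetry acts on $(x,y,z)$ with signs, as the $-y^3$ in $W_\LL$ indicates. The paper instead establishes the shape of the expansion directly: $m_2^b(Z,Y)=c_\LL\bar X+c'x\be_\LL$ by inspecting which triangles with at most one $b$-insertion exist, and then lists the nine rigid trapezoids that can contribute to $m_{2,+}^{b}(\bar X,X)$, checking that each outputs a multiple of $xX$, $yY$ or $zZ$. Your reduction of $c_\LL$ to $\langle m_2(Z,Y),X\rangle_{PD_\LL}$ via cyclicity and unitality is fine and equivalent to the paper's more elementary route through $m_{2,0}(\bar X,X)=vol_\LL$, though it quietly assumes cyclic symmetry of the $\AI$-structure, which the rest of the paper is careful not to presuppose.
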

\begin{proof}
$m_2^b(Z,Y)$ is given by counting of holomorphic triangles whoses vertices cyclically ordered by $(Z,Y,X)$, $(Z,Y,xX)$, $(Z,xX,Y)$ or $(xX,Z,Y)$.
Here, $(Z,Y,X)$ denotes  triangles without $b$-insertion which has two inputes $Z, Y$ and one output $\bar{X}$.
The other expressions include a term $xX$, which comes from the weak bounding cochain $b$,
corresponds to an operation with an output given by a multiple of a unit $1_\LL$.
%
%On the other hand, the triangles whose vertices contain $xX$ give $c'x 1_\LL$ for some $c'\in \Lambda_0$, since those triangles correspond to operations $m_3(Z,Y,xX)$, $m_3(Z,xX,Y)$ or $m_3(xX,Z,Y)$. 

Hence, $m_2^b(Z,Y)=c_\LL \bar{X}+c'x \be_\LL$, and
\[ [Z,Y,X]= m_2^b(c_\LL \bar{X}+c'x1_\LL,X)=m_2^b(c_\LL\bar{X},X)+c'xX.\]
We divide \[m_2^b(c_\LL \bar{X},X)=m_{2,0}(c_\LL \bar{X},X)+m_{2,+}^{b}(c_\LL \bar{X},X)\]
where $m_{2,0}$ computes the component coming from a constant triangle, while $m_{2,+}^{b}$ are components of positive areas. Note that 
\[m_{2,0}(\bar{X},X)=vol_\LL,\] so $m_{2,0}(c_\LL \bar{X},X)=c_\LL vol_\LL$. 

On the other hand, $m_{2,+}^{b}$ involves nontrivial weak bounding cochain contributions. By degree reason, we have only one weak bounding cochain for each polygon contributing to $m_{2,+}^{b}$, so the problem boils down to the count of holomorphic trapezoids(i.e. computation of $m_3$) whose vertices are ordered as follows, where the last vertices are (duals of) outputs:
\begin{itemize}
\item $(zZ,\bar{X},X,\bar{Z})$
\item $(yY,\bar{X},X,\bar{Y})$
\item $(xX,\bar{X},X,\bar{X})$
\item $(\bar{X},zZ,X,\bar{Z})$
\item $(\bar{X},yY,X,\bar{Y})$
\item $(\bar{X},xX,X,\bar{X})$
\item $(\bar{X},X,zZ,\bar{Z})$
\item $(\bar{X},X,yY,\bar{Y})$
\item $(\bar{X},X,xX,\bar{X})$

\end{itemize}
It is evident that there are no other trapezoids which contribute to $m_2^{b_+}(\bar{X},X)$, from Figure \ref{fig:m2xbarx}.
\begin{figure}
	\includegraphics[height=3in]{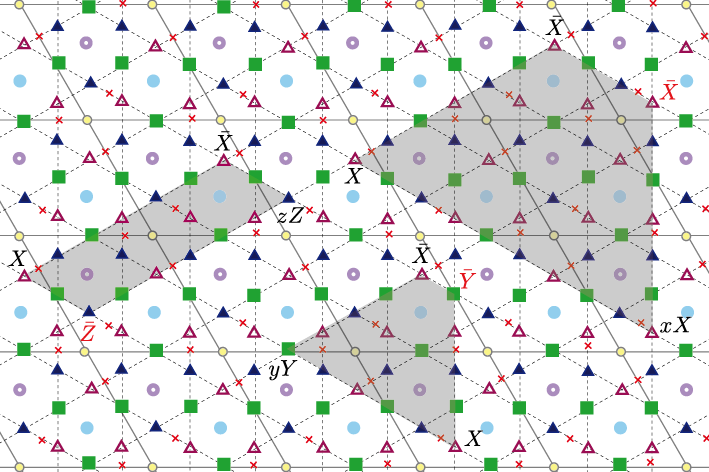}
	\caption{We depict three trapezoids, with vertices $(zZ,\bar{X},X,\bar{Z})$, $(\bar{X},yY,X,\bar{Y})$ and $(\bar{X},X,xX,\bar{X})$. Vertices which represent outputs($Z$, $Y$ and $X$ respectively) are in red color. }
	\label{fig:m2xbarx}
\end{figure}

Hence, we have
\[ m_2^b(m_2^b(Z,Y),X)=c_\LL vol_\LL+ c_1 xX+c_2 yY+ c_3 zZ\] for some $c_\LL,c_1,c_2,c_3 \in \Lambda_0$. 

Now we compute $c_\LL$ which is the number of triangles with vertices $(Z,Y,X)$. Recall that signs for holomorphic polygons on surfaces are computed as follows, due to \cite{Se2}. Let $u \in \mathcal{M}(p_1,...,p_k;q)$ be a holomorphic polygon, where $p_1 \in L_0 \cap L_1,\cdots, p_k \in L_{k-1}\cap L_k, q \in L_k \cap L_0$. The sign of $u$ is determined by the following steps.
\begin{itemize}
 \item If a Lagrangian is equipped with a nontrivial spin structure, put a point $\circ$ on it, on which the nontrivial spin bundle is twisted.
 \item Disagreement of the orientation of $\partial u$ on $L_0$ is irrelevant.
 \item If the orientation of $\partial u$ on $\widearc{p_i p_{i+1}}$ does not agree with $L_i$, the sign is affected by $(-1)^{|p_i|}.$
 \item If the orientation of $\partial u$ on $\widearc{p_k q}$ does not agree with $L_k$, the sign is affected by $(-1)^{|p_k|+|q|}.$ 
 \item Mutiply $(-1)^l$ when $\partial u$ passes through nontrivial spin points $\circ$ $l$ times.
\end{itemize}
\begin{figure}
	\includegraphics[height=3in]{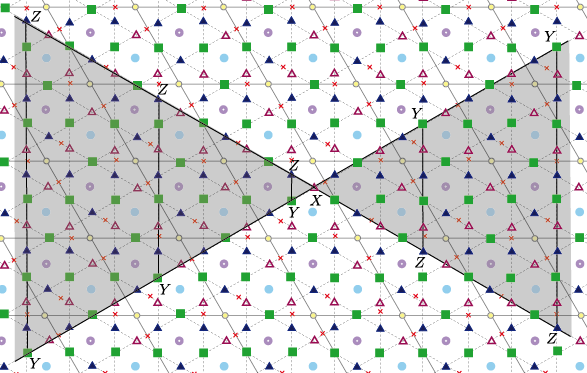}
	\caption{The computation of $c_\LL$. Spin structures are denoted by red crosses on $\LL$.}
	\label{fig:cL}
\end{figure}

The Figure \ref{fig:cL} illustrates all triangles of vertices $(Z,Y,X)$. The triangles on the left have areas as $q^1, q^{(6+1)^2},\cdots, q^{(6k+1)^2},\cdots$. Their signs are positive before considering spin structures, but the number of spin structures changes alternatingly. Hence, the triangles on the left gives the following number
\[ \sum_{k\geq 0} (-1)^k q^{(6k+1)^2}.\]
 We also count triangles on the right, and obtain the number \[ \sum_{k\geq 0} (-1)^{k+1} q^{(6k+5)^2}\]
 and sum of those numbers gives the formula of $c_\LL$.
\end{proof}

  By definition, for $\alpha \in HF(L,L)$, we have
\[ B(\alpha)= \str \big(m_2^{0,0,b}(\alpha,m_2^{0,b,b}(\bullet,[Z,Y,X])) \big)dx\wedge dy\wedge dz. \]

Now, we prove an analogue of Corollary \ref{co:degb}. We remark that  we can also use
another approach (as in toric case) that the output of $\ks$ map is a multiple of unit to prove it.
\begin{lemma}\label{lem:degb}
\[B(\alpha)=\str\Big( m_2^{0,0,b}\big(\alpha, m_2^{0,b,b}(\bullet,c_\LL vol_\LL)\big)\Big)dx\wedge dy\wedge dz.\]
In particular, the terms  $c_1 xX+ c_2 yY+ c_3 zZ$ in $[Z,Y,X]$ do not contribute to the $B$-invariant.

\end{lemma}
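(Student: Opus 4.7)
By Lemma \ref{lem:cl},
\[
[Z,Y,X] \;=\; c_\LL \, vol_\LL \;+\; c_1 xX + c_2 yY + c_3 zZ,
\]
so the two assertions of the lemma are equivalent: the displayed formula for $B(\alpha)$ holds if and only if the degree-one remainder $c_1 xX + c_2 yY + c_3 zZ$ contributes zero to the supertrace modulo the Jacobian ideal. I would therefore reduce the problem to proving this vanishing.

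The plan is to follow the strategy of Corollary \ref{co:degb} from the toric setting. First, one expresses the supertrace defining $B$ via a Cardy-type identity, namely the $\PP^1_{3,3,3}$-analog of equation \eqref{eq10}: for any $\gamma \in CF(\LL,\LL)$,
\[
\str\big(\bullet \mapsto m_2^{0,0,b}(\alpha, m_2^{0,b,b}(\bullet, \gamma))\big) \;\equiv\; \pm \langle i_*(\alpha), i_*(\gamma) \rangle_{PD_{\PP^1_{3,3,3}}} \pmod{J},
\]
where $i_*$ denotes the open-closed (boundary-bulk) map. This identity is the orbi-sphere analog of Theorem \ref{fooothm1}; it is proved by a cobordism between the sand-glass and bi-crescent configurations of holomorphic annuli, adapted to the cyclic $\AI$-structure on the Seidel Lagrangian constructed in the work-in-progress of Amorim-Cho-Hong-Lau.

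Second, one applies the $\PP^1_{3,3,3}$-analog of Theorem \ref{thm:vd}: the image of $i_*$ is insensitive to the sub-top-degree components of its input. More precisely, $\langle Q, i_*(h) \rangle_{PD_{\PP^1_{3,3,3}}} = \langle CO(Q), h \rangle_{\LL}$ vanishes for every $Q \in QH^*(\PP^1_{3,3,3})$ whenever the top-degree (i.e., Clifford-degree $n=3$) part of $h$ is zero. Since $c_1 xX + c_2 yY + c_3 zZ$ is purely of Clifford-degree one, this element pairs trivially with every $i_*(\alpha)$, and so its contribution to the supertrace vanishes modulo $J$. Substituting $\gamma = c_\LL vol_\LL$ for $\gamma = [Z,Y,X]$ on the left hand side then yields the displayed formula.

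\textbf{Main obstacle.} The hard part is establishing the two ingredients above---the Cardy-type identity for annuli and the vanishing pairing in the spirit of Theorem \ref{thm:vd}---in the orbi-sphere setting. Both rely on a cyclic $\AI$-structure on $HF(\LL,\LL)$ compatible with bulk insertions, the construction of Kodaira-Spencer and open-closed maps for $\PP^1_{a,b,c}$, and the corresponding adjunction between them; these are the subject of the forthcoming Amorim-Cho-Hong-Lau paper, which we assume throughout this section. Granted these inputs, the proof of Lemma \ref{lem:degb} proceeds in parallel with Corollary \ref{co:degb} essentially verbatim, with the Clifford-style grading on $HF(\LL,\LL)$ playing the role of the cohomological grading of the Lagrangian torus fiber.
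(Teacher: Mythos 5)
Your proposal takes a genuinely different route from the paper's proof, and it is the route the paper explicitly mentions but deliberately declines to take. Just before the lemma the authors remark that ``we can also use another approach (as in toric case) that the output of $\ks$ map is a multiple of unit to prove it''---that is precisely your plan: run the orbi-sphere analogue of the annulus/Cardy identity \eqref{eq10} together with the adjunction and degree-vanishing statement of Theorem \ref{thm:vd} to see that only the top-degree part of $[Z,Y,X]$ survives. The catch is that both ingredients (the cyclic $\AI$-structure, the open-closed maps, the Cardy identity, and the adjunction for $\PP^1_{3,3,3}$) are only available as the Amorim--Cho--Hong--Lau work in progress, so your argument is conditional on results the paper is careful not to assume at this point (the Cardy identity is only invoked later, and explicitly flagged as an assumption). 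The paper's own proof is instead short, unconditional, and purely algebraic: writing $S(\gamma)=\str\big(m_2^{0,0,b}(\alpha,m_2^{0,b,b}(\bullet,\gamma))\big)$, the permutation antisymmetry of Theorem \ref{perminv} gives $S([Z,Y,X])=-S([Y,Z,X])$, while the relation $m_2^{b}(Z,Y)+m_2^{b}(Y,Z)=\tfrac{\partial^2 W_\LL}{\partial y\,\partial z}\cdot\be=\psi x\cdot\be$ gives $S([Z,Y,X])+S([Y,Z,X])=S(\psi xX)$; hence $S(\psi xX)=0$ in $Jac(W_\LL)$, and since $\psi=-q+\cdots$ is a unit in $\Lambda$, the term $c_1xX$ (and likewise $c_2yY$, $c_3zZ$) drops out. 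If you want your version to stand, you should either state the two orbifold inputs as explicit hypotheses or replace them with this Clifford-relation argument, which buys a self-contained proof at the cost of being special to the situation where the off-diagonal Hessian entries of $W_\LL$ are units.
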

\begin{proof}
 By Theorem \ref{perminv}, we have
\[ \str \Big( m_2^{0,0,b}\big(\alpha, m_2^{0,b,b}(\bullet,m_2^{b,b,b}(m_2^{b,b,b}(Z,Y),X))\big)\Big)
=-\str \Big( m_2^{0,0,b}\big(\alpha, m_2^{0,b,b}(\bullet,m_2^{b,b,b}(m_2^{b,b,b}(Y,Z),X))\big)\Big).\]
On the other hand, we have
\[ m_2^{b,b,b}(Z,Y)+m_2^{b,b,b}(Y,Z)=\frac{\partial^2 W_\LL}{\partial y \partial z}\cdot {\be}=\psi x \cdot {\be},\]
so 
\begin{align*}
 &\str \Big( m_2^{0,0,b}\big(\alpha, m_2^{0,b,b}(\bullet,m_2^{b,b,b}(m_2^{b,b,b}(Z,Y)+m_2^{b,b,b}(Y,Z),X))\big)\Big)\\
 =&\;\str \Big( m_2^{0,0,b}\big(\alpha, m_2^{0,b,b}(\bullet,m_2^{b,b,b}(\psi x\cdot {\be},X))\big)\Big)\\
 =&\;\str \Big( m_2^{0,0,b}\big(\alpha, m_2^{0,b,b}(\bullet,\psi xX)\big)\Big)=0
 \end{align*} 
 in $Jac(W)$. Similarly we can show that
 \[ \str\Big(m_2^{0,0,b}\big(\alpha,m_2^{0,b,b}(\bullet,\psi yY)\big)\Big)=\str\Big(m_2^{0,0,b}\big(\alpha,m_2^{0,b,b}(\bullet,\psi zZ)\big)\Big)=0.\]
 Hence, $B(\alpha)=\str\Big( m_2^{0,0,b}\big(\alpha, m_2^{0,b,b}(\bullet,c_\LL vol_\LL)\big)\Big)dx\wedge dy\wedge dz$.
 \end{proof}
Assuming Cardy identity (which is a work in progress), we have  the commutativity of (\ref{cardy}), 
\[B(\alpha)=c_\LL\cdot (\ks \circ OC)(\alpha)dx\wedge dy\wedge dz,\]
We define a  new pairing on $Jac(W_\LL)$ as follows:
\[ \langle f,g \rangle_{c_\LL}:=\langle c_\LL\cdot f, c_\LL\cdot g\rangle_{res} = c_\LL^2 \langle f, g \rangle_{res}.\] 

The following proposition proves the Theorem \ref{thm:9}.
\begin{prop}\label{prop:isoks}
$\ks$ is an isometry between $(QH^*(\PP^1_{3,3,3}),\langle\;,\;\rangle_{PD})$ and $(Jac(W_\LL),\langle\;,\;\rangle_{c_\LL})$.
\end{prop}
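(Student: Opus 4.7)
The plan is to follow the orbi-sphere analog of the toric proof of Proposition~\ref{prop:m}. Since $\ks$ is already known to be a ring isomorphism (Amorim-Cho-Hong-Lau), and both $QH^*(\PP^1_{3,3,3})$ and $Jac(W_\LL)$ are Frobenius algebras, the claim that $\ks$ is an isometry with conformal factor $c_\LL^2$ reduces to the single trace identity
\[
\int_M v \;=\; c_\LL^2 \cdot \mathrm{Res}\bigl(\ks(v)\, dx\,dy\,dz\bigr) \qquad \text{for all } v \in QH^*(\PP^1_{3,3,3}).
\]
Because $W_\LL$ is a Morse function on the critical set and $Jac(W_\LL) = \bigoplus_\xi \Lambda \cdot \be_\xi$, this is in turn equivalent to $\langle \be_\xi, \be_\xi \rangle_{PD_M} = c_\LL^2/\det \mathrm{Hess}(W_\LL)(\xi)$ at each critical point $\xi$, where the right-hand side is exactly $c_\LL^2$ times the standard Grothendieck residue of the local idempotent.

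Assuming the Cardy identity in this setting (cited as work in progress), diagram~\eqref{cardy} commutes and gives $B(\alpha) = c_\LL \cdot \ks(OC(\alpha))\, dx\,dy\,dz$. The central step is then to prove the $\PP^1_{3,3,3}$-analog of Theorem~\ref{fooothm1} by the same pseudo-holomorphic annulus degeneration: namely,
\[
\bigl\langle OC(\alpha_1),\, OC(\alpha_2)\bigr\rangle_{PD_M} \;=\; (-1)^{\frac{n(n-1)}{2}} \sum_{I,J} g^{IJ}\, \bigl\langle m_2^{b,\xi,\xi}(e_I,\alpha_1),\, m_2^{\xi,b,b}(e_J,\alpha_2)\bigr\rangle_{PD_L},
\]
with $n=3$, one boundary on the reference Lagrangian $(\LL,b)$ and the other on $L$. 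Specializing to $\alpha_1 = \alpha_2 = vol_L$ and running the Clifford algebra calculation from Section~8 on the cyclic canonical model of $HF(L,L)$ at a Morse critical point, the top-class identity $[Z,Y,X]^{(top)} = c_\LL \cdot vol_L$ from Lemma~\ref{lem:cl} enters the computation exactly where the Floer volume class differs from the classical one, producing
\[
\langle OC(vol_L), OC(vol_L)\rangle_{PD_M} \;=\; c_\LL^2 \cdot \det\mathrm{Hess}(W_\LL)(\xi).
\]
The adjoint identity $\langle \be_\xi, OC(vol_L)\rangle_{PD_M} = \langle CO(\be_\xi), vol_L\rangle_{PD_L} = 1$ then identifies $OC(vol_L)$ with $\be_\xi/\langle \be_\xi, \be_\xi\rangle_{PD_M}$, as in Lemma~\ref{lem:uu}, yielding $\langle \be_\xi, \be_\xi\rangle_{PD_M} = c_\LL^2/\det\mathrm{Hess}(W_\LL)(\xi)$, which is precisely $c_\LL^2 \langle \ks(\be_\xi), \ks(\be_\xi)\rangle_{res}$.

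The main obstacle is the Cardy identity for $\PP^1_{3,3,3}$, together with the annulus degeneration argument in the orbifold setting; both are non-trivial extensions of the toric work of Fukaya-Oh-Ohta-Ono and the paper relies on them being established in the forthcoming work of Amorim-Cho-Hong-Lau. Once these inputs are granted, the remainder of the argument is essentially formal, reusing the Clifford computation of Section~8 verbatim with the conformal factor $c_\LL$ computed explicitly in Lemma~\ref{lem:cl}.
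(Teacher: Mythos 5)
Your strategy has a fatal structural problem: it ports the Morse/Clifford/$Z$-invariant machinery of Section~8 to $\PP^1_{3,3,3}$, but that machinery is inapplicable here because $W_\LL=\phi(x^3-y^3+z^3)+\psi xyz$ is \emph{not} Morse at the relevant critical point. The Seidel Lagrangian is taken with $\xi=0$, i.e.\ the critical point is the origin, where the Hessian of a cubic vanishes identically; $Jac(W_\LL)$ is an $8$-dimensional local Artinian ring, not a sum of field factors, so there are no idempotents $\be_\xi$, no quantity $\det\mathrm{Hess}(W_\LL)(\xi)$, and (as the paper itself remarks in Section~7) the $Z$-invariant vanishes for non-Morse singularities. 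Your reduction of the trace identity to $\langle \be_\xi,\be_\xi\rangle_{PD_M}=c_\LL^2/\det\mathrm{Hess}(W_\LL)(\xi)$ therefore has no meaning in this example; this is exactly why the paper states after Proposition~\ref{prop:m} that the non-Morse case is unknown in general and why Section~\ref{sec:333} proceeds by a direct computation instead. (There is also an internal inconsistency: if $OC(vol_L)=\be_\xi/\langle\be_\xi,\be_\xi\rangle$ then your claimed $\langle OC(vol_L),OC(vol_L)\rangle=c_\LL^2\det\mathrm{Hess}$ would give $\langle\be_\xi,\be_\xi\rangle=1/(c_\LL^2\det\mathrm{Hess})$, not $c_\LL^2/\det\mathrm{Hess}$; the toric computation actually yields $Z=\det\mathrm{Hess}(W)/c_L^2$.)

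Beyond this, your proof defers all remaining content to inputs the paper explicitly flags as unavailable: the Cardy identity and the annulus-degeneration formula (the analogue of Theorem~\ref{fooothm1}) have only been established for toric manifolds, and the cyclic-symmetric perturbation framework underlying the $Z$-invariant computation is toric-specific. The paper's actual proof avoids all of this. It uses only that $\ks$ is a ring isomorphism with $\ks(1)=1$ and the explicit formula $\ks(PD(pt))=\tfrac18 q\partial_q\phi\,(x^3-y^3+z^3)+\tfrac18 q\partial_q\psi\, xyz$, reduces the isometry statement to the single evaluation $c_\LL^2\cdot\mathrm{Res}\bigl(\ks(PD(pt))/\partial_xW_\LL\partial_yW_\LL\partial_zW_\LL\bigr)=1$, computes the Grothendieck residue explicitly via a matrix $C$ with $C\cdot\nabla W_\LL=(x^4,y^4,z^4)^T$, and lands on the power-series identity
\begin{equation*}
c_\LL^2\Bigl(\phi\, q\tfrac{\partial\psi}{\partial q}-\psi\, q\tfrac{\partial\phi}{\partial q}\Bigr)=8\phi(27\phi^3-\psi^3),
\end{equation*}
which is then proved in Appendix~\ref{app2} by identifying $\phi,\psi,c_\LL$ with theta series and invoking Sturm's bound. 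None of this quantitative content appears in your proposal, and it cannot be recovered from your route without first establishing the orbifold Cardy identity.
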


\begin{proof}
Recall that for $f,g \in Jac(W_\LL)$,
\begin{align}\langle f,g\rangle_{res}&=Res|_{\partial_x W_\LL=\partial_y W_\LL=\partial_z W_\LL=0} \left(\begin{array}{c}fg \\ \partial_x W_\LL \cdot\partial_y W_\LL \cdot\partial_z W_\LL \end{array}\right)\nonumber \\
&=Res|_{x=y=z=0} \left(\begin{array}{c}fg\cdot \det C \\ x^l y^l z^l\end{array}\right) \label{residue}
\end{align}
which is nothing but the coefficient of $x^{l-1}y^{l-1}z^{l-1}$ of $fg\cdot \det C$, when $C$ is a matrix satisfying
\[ C\cdot \left(\begin{array}{c}\partial_x W_\LL \\ \partial_y W_\LL \\ \partial_z W_\LL\end{array}\right)=\left(\begin{array}{c}x^l \\ y^l \\ z^l\end{array}\right)\] for some $l$. Define $C$ as
\[ \frac{1}{3\phi (27\phi^3-\psi^3)}\left(\begin{array}{ccc} (27\phi^3-\psi^3)x^2-9\phi^2 \psi yz & 3\phi \psi^2 z^2 & \psi^3 xz \\
-\psi^3 xy & (27\phi^3-\psi^3)y^2+9\phi^2 \psi xz & 3\phi \psi^2 x^2 \\ 3\phi \psi^2 y^2 & -\psi^3 yz & (27\phi^3-\psi^3) z^2-9\phi^2 \psi xy \end{array}\right),\] then
\[ C\cdot \left(\begin{array}{c}\partial_x W_\LL \\ \partial_y W_\LL \\ \partial_z W_\LL\end{array}\right)=\left(\begin{array}{c}x^4 \\ y^4 \\ z^4 \end{array}\right)\]
and 
\[ \det C=\frac{27\phi^3(27\phi^3-\psi^3)^2\cdot x^2 y^2 z^2+9\phi^2 \psi(27\phi^3-\psi^3)^2(-x^3y^3+x^3z^3-y^3z^3)}{(3\phi(27\phi^3-\psi^3))^3}+r(x,y,z)\]
where $r(x,y,z) \in O(x^4)+O(y^4)+O(z^4)$. Hence $r(x,y,z)$ contributes as $0$ in (\ref{residue}).

Our goal is the following: $\langle a,b\rangle_{PD}=\langle \ks(a),\ks(b)\rangle_{c_\LL}$, in other words
\[\int_{\PP^1_{3,3,3}}a\cup_Q b=c_\LL^2 \cdot Res|_{\partial_x W_\LL=\partial_y W_\LL=\partial_z W_\LL=0}\left(\begin{array}{c}\ks(a)\ks(b) \\ \partial_x W_\LL \cdot\partial_y W_\LL \cdot\partial_z W_\LL \end{array}\right),\]
and if $a\cup_Q b=PD(pt)+\alpha$ where $\alpha \in QH^*(\PP^1_{3,3,3})$ is of lower degree than $PD(pt)$, then the right hand side is 1. Since $\ks$ is a ring homomorphism, $\ks(a)\ks(b)=\ks(a\cup_Q b)$. By Cho-Hong-Lau's result, $\ks(\alpha)$ has degree less than 3, so it contributes as zero in the residue. To summarize, we finish the proof if we show that
\begin{equation}\label{powerseriesid} c_\LL^2\cdot Res|_{\partial_x W_\LL=\partial_y W_\LL=\partial_z W_\LL=0}\left(\begin{array}{c}\ks(PD(pt)) \\ \partial_x W_\LL \cdot\partial_y W_\LL \cdot\partial_z W_\LL \end{array}\right)=1.\end{equation}

Recall from above that 
\[\ks(PD(pt))=\frac{1}{8}q \frac{\partial \phi}{\partial q}(x^3-y^3+z^3)+\frac{1}{8}q\frac{\partial \psi}{\partial q}xyz,\]
so
\begin{align*}
& c_\LL^2 \cdot Res|_{\partial_x W_\LL=\partial_y W_\LL=\partial_z W_\LL=0}\left(\begin{array}{c}\ks(PD(pt)) \\ \partial_x W_\LL \cdot\partial_y W_\LL \cdot\partial_z W_\LL \end{array}\right)\\
=& \;c_\LL^2\cdot Res|_{x=y=z=0} \left(\begin{array}{c}\det C\cdot \big(\frac{1}{8}q \frac{\partial \phi}{\partial q}(x^3-y^3+z^3)+\frac{1}{8}q\frac{\partial \psi}{\partial q}xyz\big) \\ x^4 y^4 z^4 \end{array}\right) \\
=& \;c_\LL^2\cdot \frac{27\phi^3(27\phi^3-\psi^3)^2\cdot q\frac{\partial \psi}{\partial q}-27 \phi^2 \psi(27\phi^3-\psi^3)^2 \cdot q \frac{\partial \phi}{\partial q}}{8\cdot (3\phi(27\phi^3-\psi^3))^3}. 
\end{align*}
Therefore, \eqref{powerseriesid} corresponds to the following identification of formal power series.
\begin{equation}\label{eq:nn}
c_\LL^2\Big(\phi \cdot q\frac{\partial \psi}{\partial q}-\psi\cdot q\frac{\partial \phi}{\partial q}\Big)=8\phi(27\phi^3-\psi^3),
 \end{equation}
This identity is proved in Appendix \ref{app2}.
\end{proof}

\begin{remark}
Satake-Takahashi \cite{ST} proved that Frobenius manifold structures on $QH^*(\PP^1_{3,3,3})$ and on the universal unfolding of $W_\LL$ are isomorphic, using uniqueness of solution of WDVV equation. But it was not known whether the
Kodaira-Spencer map(which is constructed geometrically) preserves the parings on each side. We showed that it does not preserve the pairing at the origin unless we multiply $c_\LL^2$. 
If we generalize our construction using bulk-deformations, we obtain universal unfoldings of $W_{\LL}$. 
We hope to investigate such a generalization and relation to the work of \cite{ST} in the near future.
\end{remark}

\appendix

\section{Properties of supertrace}
For $\Z/2$-graded vector spaces $E^\bullet = E^0 \oplus E^1$, and $F^\bullet=F^0 \oplus F^1$, a morphism $\Phi: E^\bullet \to F^\bullet$ can be
written as 
 \[ \Phi= \left[ {\begin{array}{cc} \Phi_{00} & \Phi_{01} \\ \Phi_{10} & \Phi_{11} \ \end{array} } \right], \; \textrm{where} \; \Phi_{ij} \in \Hom(E^j,F^i). \]
\begin{definition}
A {\em supertrace }of a morphism $\Phi$ is defined as
$$\str(\Phi) = \mathrm{Tr}(\Phi_{00}) - \mathrm{Tr}(\Phi_{11}).$$
\end{definition}
The following is easy to check.
\begin{lemma}\label{lem:stp}
\begin{enumerate}
\item  $\str(\Phi) = 0$ if the degree of $\Phi$ is odd. 
\item $\str(\Phi + \Psi) = \str(\Phi) + \str(\Psi)$
\item
%\begin{equation}\label{strp1}
$\str(\Phi \Psi) = (-1)^{|\Phi||\Psi|} \str(\Psi \Phi)$
%\end{equation}
for homogeneous morphisms $\Phi$ and $\Psi$.
\item In particular, we have $\str([\Phi,\Psi]) = 0$ where $[\Phi,\Psi] = \Phi\Psi - (-1)^{|\Phi||\Psi|} \Psi  \Phi$.
\item $\str([A,B]C) = (-1)^{1+ |A||B|}\str(B[A,C]) = (-1)^{|C|(|A|+|B|)} \str([C,A]B)$
\end{enumerate}
\end{lemma}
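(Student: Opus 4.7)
The plan is to verify the five items by working directly from the block-matrix definition of the supertrace, using only linearity, the ordinary cyclic identity $\mathrm{Tr}(AB)=\mathrm{Tr}(BA)$, and careful bookkeeping of the $\Z/2$-grading. Items (1) and (2) are essentially immediate: an odd morphism sends $E^0\to F^1$ and $E^1\to F^0$, so its diagonal blocks $\Phi_{00}$ and $\Phi_{11}$ vanish and hence $\str(\Phi)=0$; and additivity follows blockwise from additivity of the ordinary trace.

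The main content is (3), the graded cyclicity $\str(\Phi\Psi)=(-1)^{|\Phi||\Psi|}\str(\Psi\Phi)$, which I would handle by splitting into the three possible parity cases. If both $\Phi,\Psi$ are even, the block matrices are diagonal and the identity reduces degree-by-degree to the ordinary cyclic identity. If both are odd, the compositions $\Phi\Psi$ and $\Psi\Phi$ are again diagonal, but with entries of the form $\Phi_{01}\Psi_{10}$ and $\Psi_{01}\Phi_{10}$ in the degree-$0$ block and the swap in the degree-$1$ block; applying ordinary cyclicity then gives $\str(\Phi\Psi)=-\str(\Psi\Phi)$, matching the sign $(-1)^{1\cdot 1}=-1$. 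If the parities disagree, both $\Phi\Psi$ and $\Psi\Phi$ are odd, so both supertraces vanish by (1) and the sign is immaterial.

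Items (4) and (5) are then formal consequences of (3). For (4), expand $\str([\Phi,\Psi])=\str(\Phi\Psi)-(-1)^{|\Phi||\Psi|}\str(\Psi\Phi)$ and apply (3) to the second term. For the first equality in (5), I would expand $[A,B]C=ABC-(-1)^{|A||B|}BAC$, use (3) to cycle $A$ past $BC$ in $\str(ABC)$, and recombine the resulting $\str(BCA)$ and $\str(BAC)$ as $-(-1)^{|A||B|}\str\bigl(B[A,C]\bigr)$. The second equality follows by an analogous computation, or more slickly by applying (3) once to move the outer factor $C$ past $[A,B]$ (producing the sign $(-1)^{|C|(|A|+|B|)}$) and then swapping the roles of the letters in the preceding argument.

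I do not expect a real obstacle here; the entire lemma is block-matrix bookkeeping plus ordinary trace cyclicity, which the authors acknowledge by writing "The following is easy to check." The only mildly delicate step is the sign tracking in the odd-odd case of (3) and in the recombination step of (5), but both are purely mechanical and do not require any input beyond the definition of $\str$.
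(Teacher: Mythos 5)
Your proof is correct. The paper gives no argument for this lemma (it is introduced with ``The following is easy to check''), and your block-matrix verification --- the diagonal blocks of an odd morphism vanish for (1), a parity case analysis reducing (3) to ordinary trace cyclicity, and formal expansion plus (3) for (4) and (5) --- is exactly the standard argument the authors intend; the only step you leave slightly compressed, the identity $\str(C[A,B])=\str([C,A]B)$ needed for the last equality of (5), follows from one more application of (3) in the same mechanical way.
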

Here are some elementary properties of boundary-bulk maps.
\begin{lemma}\label{strprop}
 $\str(f (dQ)^n )$ satisfies the following properties.
 \begin{enumerate}
 \item $\str(f (dQ)^n) =0$ unless $|f| +n \equiv 0$ mod 2.
 \item $\str(\delta(f)(dQ)^n) =0$ modulo $\partial_i W$'s.
 \item $\str(A(\delta(B))) = -(-1)^{|A|}\str(\delta(A)B)$
 \end{enumerate}
\end{lemma}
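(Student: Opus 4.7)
The plan is to derive all three statements from two basic ingredients: the super-cyclic property of $\str$ (Lemma \ref{lem:stp}(3)--(4)) and the identity $Q\cdot dQ + dQ\cdot Q = dW\cdot\mathrm{Id}$, which falls out of differentiating the defining relation $Q^2 = (W-\lambda)\cdot\mathrm{Id}$ of the matrix factorization.

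Part (1) is essentially immediate: $dQ$ is a matrix of $1$-forms of $\Z/2$-degree $1$, so $f(dQ)^n$ has degree $|f|+n$ mod $2$, and Lemma \ref{lem:stp}(1) forces the supertrace to vanish when this is odd.

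For (3), the cleanest route is to notice that $\delta(X) = [Q,X]$ is the super-commutator with $Q$, so Lemma \ref{lem:stp}(4) gives $\str(\delta(X)) = 0$ for any $X$. Applying this to $X = AB$ together with the graded Leibniz rule $\delta(AB) = \delta(A)B + (-1)^{|A|}A\,\delta(B)$ then yields (3) upon rearrangement.

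The substantive point is (2). The key step I expect will be an induction on $n$ showing that, modulo the ideal $(\partial_1 W,\ldots,\partial_n W)$, one has $Q(dQ)^n \equiv (-1)^n (dQ)^n Q$. The base case $Q\cdot dQ \equiv -dQ\cdot Q$ is exactly the identity above, and the inductive step forces one to commute $dW$ past factors of $dQ$, which incurs signs because both are odd forms; getting these signs right is the only delicate bit of the argument. Once the congruence is in hand, I would expand $\str(\delta(f)(dQ)^n) = \str(Qf(dQ)^n) - (-1)^{|f|}\str(fQ(dQ)^n)$, apply the congruence inside the second summand to produce $(-1)^n\str(f(dQ)^n Q)$, and use the super-cyclic rule on the first summand to get $(-1)^{|f|+n}\str(f(dQ)^n Q)$. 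The two terms cancel modulo the Jacobian ideal, completing (2).
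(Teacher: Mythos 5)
Your proposal is correct and follows essentially the same route as the paper: parts (1) and (3) are read off from the basic supertrace properties of Lemma \ref{lem:stp} (your Leibniz-rule derivation of (3) is just an unwinding of Lemma \ref{lem:stp}(4)--(5)), and part (2) is proved exactly as in the paper by differentiating $Q^2 = W\cdot\mathrm{Id}$ to get $Q(dQ)+(dQ)Q = dW\cdot\mathrm{Id}$, anticommuting $Q$ past $(dQ)^n$ modulo the Jacobian ideal, and cancelling against the cyclically permuted term. The only cosmetic difference is that you make the induction giving $Q(dQ)^n \equiv (-1)^n(dQ)^nQ$ explicit, where the paper leaves it implicit; the sign issue you flag is harmless since $dW \equiv 0$ modulo the ideal regardless of where it sits.
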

\begin{proof}
The first and third  follow from the basic properties in Lemma \ref{lem:stp}. For the second property, 
differentiate $Q^2 = W \cdot Id$ to obtain
$(dQ)Q + Q(dQ) =dW \cdot Id$. Hence, $dQ$ and $Q$ anti-commute modulo $\partial_i W$'s.
$$\str((Q f  - (-1)^{|f|} f Q ) (dQ)^n )  =
\str (Qf(dQ)^n - (-1)^{|f|+n} f (dQ)^nQ) = 0.$$
\end{proof}

%%%%%%%%%%%
\section{Mirror identity}\label{app2}
This appendix \ref{app2} is due to Dohyeong Kim.
For \begin{align}
\phi(q) &= \sum_{k \in \Z} (-1)^k (k + \frac 1 2 ) q^{(6k+3)^2}
\\
\psi(q) &=  \sum_{k \in \Z} (-1)^k (6k + 1 ) q^{(6k+1)^2}
\\
c_{\mathbb L}(q) &= \sum_{k \in \Z} (-1)^k q^{(6k+1)^2},
\end{align}
we show the following identity, which is used in the proof of Proposition \ref{prop:isoks}.
\begin{prop} 
We have the following identity
\begin{align}\label{mirrorid}
c_{\mathbb L}^2 \left(\phi q \partial_q \psi - \psi q \partial_q \phi \right) = 8 \phi \left( 27 \phi^3 - \psi^3 \right).
\end{align}
\end{prop}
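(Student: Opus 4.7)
The plan is to prove \eqref{mirrorid} by interpreting both sides as modular forms of the same weight, level, and character on a common congruence subgroup, and then using the finite-dimensionality of the relevant space to reduce to comparing finitely many Fourier coefficients.

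First I would identify the modular-form type of the three series. The integers $6k+1$ range over a single residue class modulo $6$, and the sign $(-1)^k$ is a Dirichlet character evaluated at $6k+1$ modulo $12$. Consequently, $c_{\mathbb L}(q)$ is a holomorphic unary theta series of weight $\tfrac12$ on a congruence subgroup of level dividing $144$ with an explicit character, while $\phi(q)$ and $\psi(q)$, being unary theta series with a linear coefficient, are holomorphic modular forms of weight $\tfrac32$ on the same subgroup with corresponding characters. Via the Jacobi triple product, each admits an explicit $\eta$-quotient expression.

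Next I would check that both sides of \eqref{mirrorid} have matching weight. The operator $q\partial_q$ is not modular, but the antisymmetric combination
\[ \phi\,q\partial_q\psi \;-\; \psi\,q\partial_q\phi \;=\; \phi\,\vartheta\psi \;-\; \psi\,\vartheta\phi, \]
where $\vartheta = q\partial_q - \tfrac{k}{12}E_2$ denotes the Serre derivative on weight-$k$ forms, is genuinely modular because the $E_2$ corrections cancel (as $\phi$ and $\psi$ have the same weight). This is a holomorphic modular form of weight $\tfrac32 + \tfrac72 = 5$. Multiplying by $c_{\mathbb L}^2$ (weight $1$) produces weight $6$ on the LHS, matching the weight $6$ of the RHS $8\phi(27\phi^3-\psi^3)$. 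Both sides also transform by the same character once one tracks the characters of $c_{\mathbb L}, \phi, \psi$. By the standard finite-dimensionality of spaces of modular forms on congruence subgroups, the identity then reduces to comparing sufficiently many Fourier coefficients (bounded via a Sturm-type estimate).

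The main obstacle will be bookkeeping the precise level and character to apply a sharp-enough dimension or Sturm bound, and then reliably computing enough Fourier coefficients of all three series $\phi, \psi, c_{\mathbb L}$. A potentially cleaner alternative is to convert the Jacobi triple product expressions of $\phi, \psi, c_{\mathbb L}$ into an $\eta$-quotient identity and verify \eqref{mirrorid} directly as an algebraic identity among $\eta$-quotients; I expect this route to give the most transparent proof, though it demands some dexterity with $\eta$-quotient manipulations.
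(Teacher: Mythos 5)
Your proposal follows essentially the same route as the paper's Appendix~\ref{app2}: write $\phi,\psi,c_{\mathbb L}$ as unary theta series of weight $\tfrac32$ and $\tfrac12$, observe that the antisymmetric derivative combination is the Rankin--Cohen bracket $[\,\cdot,\cdot\,]_1$ and hence modular, match weight, level and (trivial) character on both sides, and finish with Sturm's bound and a finite coefficient check. The only correction is that the common level coming from Shimura's transformation formula is $\Gamma_0(576)$ (from $4\cdot 12^2$ and $64\cdot 9$), not a divisor of $144$, so the Sturm bound is $q^{577}$.
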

\begin{remark}
The derivative  $\left(\phi q \partial_q \psi - \psi q \partial_q \phi \right)$ is an example of Rankin-Cohen Bracket  of modular forms. For more details, see Rankin \cite{R57}.
\end{remark}
\begin{proof}
Basic idea of proof is to show that each side of the identity  is a modular form in $M_6(\Gamma_0(576))$,
and argue that the identity can be shown by checking the equation up to $q^{577}$. We explain the details in the rest of the appendix.

\subsection{Preliminaries on theta series and Dirichlet character}
Shimura \cite{Shim} worked on modular forms with general half-integral weight  and  also with level, character and multiplier systems. Recall that theta series with a Dirichlet character $\chi$ is defined as 
 \begin{align}
\theta_\chi := \sum_{n \in \Z } \chi(n) q^{n^2}
\\
\Theta_\chi := \sum_{n \in \Z } \chi(n) n q^{n^2}.
\end{align}
We will need a transformation formula of theta series given below.

Let $\gamma \in \Gamma_0(4)$. Let
$$\gamma = \begin{bsmallmatrix}a&b\\c&d\end{bsmallmatrix}$$
be the matrix coefficients of $\gamma$. The automorphy factor $j(\gamma,\tau)$ is defined by
$$
j(\gamma,\tau) = \theta_1(\gamma \tau) / \theta_1(\tau).
$$
 It is so-called the theta multiplier system. Then by \cite{Shim} \ p.\,447 , we have
$$
j(\gamma,\tau) = \epsilon_d^{-1} \times \left( \frac{c}{d}\right) \times (c\tau+d)^{1/2}
$$
where the square-root is taken to have positive real part, $\epsilon_d$ is a fourth root of unity, and $\left( \frac{c}{d}\right)$ is the Legendre symbol. In particular,
\begin{align}\label{j}
j(\gamma,\tau)^4 =  (c\tau+d)^2.
\end{align}

\begin{prop}[Proposition 2.2\cite{Shim}]\label{shimura}
Let $\chi$ be a primitive Dirichlet character of conductor $r$. Let $\gamma = \begin{bsmallmatrix}a&b\\c&d\end{bsmallmatrix} \in \Gamma_0(4r^2)$. Then, we have
\begin{align}
\Theta_\chi(\gamma \tau) &= \chi(d)  \left( \frac{-1}{d}\right) j(\gamma,\tau)^3 \Theta_\chi(\tau) \\
\theta_\chi(\gamma \tau) &= \chi(d) j(\gamma,\tau) \theta_\chi(\tau).
\end{align}

\end{prop}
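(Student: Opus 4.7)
The plan is to recover this transformation formula via the classical Hecke–Shimura strategy, decomposing $\theta_\chi$ and $\Theta_\chi$ along residue classes modulo the conductor $r$ and then invoking Poisson summation on a congruence subgroup. Concretely, write
\[
\theta_\chi(\tau) \;=\; \sum_{a \bmod r} \chi(a)\,\vartheta_{r,a}(\tau), \qquad \vartheta_{r,a}(\tau) := \sum_{n \equiv a \,(\bmod r)} q^{n^2},
\]
so that each $\vartheta_{r,a}$ is a weight-$1/2$ theta with characteristic on $\Gamma(4r^2)$. The same decomposition, with an extra $n$ inside each sum, expresses $\Theta_\chi$ in terms of weight-$3/2$ theta series with characteristic.

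The second step is to determine how $\vartheta_{r,a}$ transforms under $\gamma = \begin{bsmallmatrix}a&b\\c&d\end{bsmallmatrix} \in \Gamma_0(4r^2)$. First I would check invariance under translation $\tau \mapsto \tau + 1$ on the level subgroup (trivial, since $4r^2 \mid c$ forces the needed congruences), and then derive the behavior under inversion $\tau \mapsto -1/\tau$ by Poisson summation applied class-by-class. The output of Poisson summation on $\vartheta_{r,a}$ is a Gauss-sum-weighted combination of $\vartheta_{r,a'}$'s. Composing the two generators and restricting to $\Gamma_0(4r^2)$, where the congruence $c \equiv 0 \bmod 4r^2$ kills most Gauss-sum contributions, one finds that the only surviving effect on the residue label is $a \mapsto d\cdot a \bmod r$, multiplied by the basic theta multiplier $j(\gamma,\tau)$ in \eqref{j}.

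The third step is to reassemble $\theta_\chi$ and $\Theta_\chi$. Substituting the transformation of $\vartheta_{r,a}$ into the sum $\sum_a \chi(a)\vartheta_{r,a}$ and reindexing $a \mapsto d^{-1}a$ (legal since $\gcd(d,r)=1$ as $r \mid c$ and $ad-bc=1$), the multiplicativity $\chi(d^{-1}a) = \overline{\chi(d)}\chi(a) = \chi(d)\chi(a)$ (using $|\chi(d)|=1$ and that $\chi$ is real or by a further Gauss-sum identity for general $\chi$) produces the overall factor $\chi(d)$. For $\Theta_\chi$, the same reindexing applies, but the inversion step on $\sum n\chi(n)q^{n^2}$ picks up an extra factor of $\tau^{-1}$ from differentiating under Poisson, cubing the automorphy factor to $j(\gamma,\tau)^3$; the parity of the factor $n$ simultaneously inserts the sign $(\tfrac{-1}{d})$ because the reindexing sends $n \mapsto -n$ precisely when $d$ is chosen in a certain residue class, tracked by the Jacobi symbol.

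The main obstacle is the precise tracking of the theta multiplier $j(\gamma,\tau) = \epsilon_d^{-1}(\tfrac{c}{d})(c\tau+d)^{1/2}$ through the Poisson–Gauss-sum computation, and verifying that the resulting character in front of $\Theta_\chi$ is exactly $\chi(d)(\tfrac{-1}{d})$ and not some twisted variant. This requires a sharp form of quadratic reciprocity to combine $\epsilon_d$, $(\tfrac{c}{d})$, and the Gauss sum $\sum_{a \bmod r}\chi(a)e^{2\pi i a^2 b/(cr^2 \tau + r^2 d)}$ cleanly. Once this bookkeeping is done, both transformation formulas follow in parallel, and the hypothesis $\gamma \in \Gamma_0(4r^2)$ is exactly what is needed to kill all error terms.
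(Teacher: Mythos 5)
First, a point of comparison: the paper does not prove this proposition at all — it is quoted verbatim as Proposition 2.2 of Shimura \cite{Shim} — so any argument you supply is necessarily ``different from the paper''. Your overall strategy (unfold $\chi$ over residue classes, equivalently via Gauss sums, reduce to theta series with characteristic, and apply Poisson summation) is the classically correct route and close in spirit to Shimura's own proof. However, as written the sketch has a gap at its central step. You propose to determine the transformation of $\vartheta_{r,a}$ under a general $\gamma\in\Gamma_0(4r^2)$ by computing its behavior under $\tau\mapsto\tau+1$ and $\tau\mapsto -1/\tau$ and then ``composing the two generators and restricting to $\Gamma_0(4r^2)$''. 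This does not work as stated: $S$ and $T$ generate $SL_2(\Z)$, not $\Gamma_0(4r^2)$ (indeed $S\notin\Gamma_0(4r^2)$), so a general $\gamma\in\Gamma_0(4r^2)$ is an arbitrary word in $S,T$, and the multiplier you want is the value of the resulting cocycle on that word. Extracting the closed form $\chi(d)\,j(\gamma,\tau)$ from an arbitrary word is precisely the content of the proposition; nothing in the sketch performs this computation. The standard ways to close the gap are (i) Poisson summation applied directly to a general $\gamma$, summing $n$ over residue classes modulo $c$ and evaluating the resulting quadratic Gauss sums (this is where quadratic reciprocity genuinely enters), or (ii) the Gauss-sum unfolding $\chi(n)=g(\bar\chi)^{-1}\sum_{m}\bar\chi(m)e^{2\pi i mn/r}$, which expresses $\theta_\chi$ through the standard $\theta_1$ at rational translates; one then conjugates $\gamma$ past those translates back into $\Gamma_0(4)$ and invokes the known automorphy of $\theta_1$ there. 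Shimura's argument is essentially (ii).

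Two smaller but genuine errors. In your third step the chain $\chi(d^{-1}a)=\overline{\chi(d)}\chi(a)=\chi(d)\chi(a)$ is false unless $\chi$ is real, and the proposition is asserted for every primitive $\chi$; whether the reindexing produces $\chi(d)$ or $\bar\chi(d)$ is decided by $ad\equiv 1\pmod{r}$ together with the direction in which Poisson summation permutes the characteristics, and it cannot be deferred to ``a further Gauss-sum identity''. (For the paper's application all characters involved are quadratic, so this is harmless there, but the statement under review is general.) Second, your explanation of the factor $\left(\tfrac{-1}{d}\right)$ in the weight-$3/2$ formula — a sign from reindexing $n\mapsto -n$ — is not the actual mechanism. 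The computation for $\Theta_\chi$ naturally produces the multiplier $(c\tau+d)\,j(\gamma,\tau)$, i.e. the weight-$1/2$ multiplier times one full power of $(c\tau+d)$ coming from the extra factor $n$ under Poisson summation; the Jacobi symbol appears only when one rewrites
\begin{equation*}
(c\tau+d)\,j(\gamma,\tau)=\epsilon_d^{2}\, j(\gamma,\tau)^{3}=\left(\frac{-1}{d}\right) j(\gamma,\tau)^{3},
\end{equation*}
using $\epsilon_d^{2}=\left(\tfrac{-1}{d}\right)$. It is bookkeeping in converting to the normalization $j(\gamma,\tau)^3$ of the statement, not a property of the summation index.
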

 We will use the following Dirichlet characters in this appenix. For any $n \in \Z$, 
$$
\chi_4(n) = 
\begin{cases}
1 & \text{if $n=4k+1$}
\\
-1&\text{if $n=4k+3$}
\\
0&\text{otherwise,}
\end{cases}
$$
$$
\tilde \chi_{4}(n) = 
\begin{cases}
1 & \text{if $n=12k+1$}
\\
1&\text{if $n=12k+5$}
\\
-1&\text{if $n=12k+7$}
\\
-1&\text{if $n=12k+11$}
\\
0&\text{otherwise,}
\end{cases}
$$
and
$$
\chi_{12}(n) = 
\begin{cases}
1 & \text{if $n=12k+1$}
\\
-1&\text{if $n=12k+5$}
\\
-1&\text{if $n=12k+7$}
\\
1&\text{if $n=12k+11$}
\\
0&\text{otherwise.}
\end{cases}
$$
Then we have
$$
\tilde \chi_{4}(n) + \chi_{12}(n)= 
\begin{cases}
2 & \text{if $n=12k+1$}
\\
-2&\text{if $n=12k+7$}
\\
0&\text{otherwise.}
\end{cases}
$$
 Recall that $\chi$ is even if $\chi(-1)=1$, odd if $\chi(-1)=-1$.  Here is an elementary observation.
\begin{lemma}
Suppose $\chi$ is odd. Then $\theta_\chi=0$. Similarly, if  $\chi$ is even, then $\Theta_\chi=0$. 
\end{lemma}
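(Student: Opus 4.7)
The plan is to prove both vanishing statements by the standard involution $n \mapsto -n$ on the summation index, exploiting that $n^2 = (-n)^2$ so that terms with $\pm n$ contribute to the same coefficient of $q^{n^2}$. The key input is the defining property of parity: $\chi(-n) = \chi(-1)\chi(n)$ for all $n$, with $\chi(-1) = -1$ when $\chi$ is odd and $\chi(-1) = +1$ when $\chi$ is even. Also, since $\chi$ is a Dirichlet character, $\chi(0) = 0$, so the $n=0$ term contributes nothing in either series.

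For the first claim, suppose $\chi$ is odd. I would group the series $\theta_\chi = \sum_{n \in \Z} \chi(n) q^{n^2}$ by the value of $n^2$. For each $n \geq 1$, the coefficient of $q^{n^2}$ is
\[
\chi(n) + \chi(-n) \;=\; \chi(n)\bigl(1 + \chi(-1)\bigr) \;=\; \chi(n)(1-1) \;=\; 0,
\]
and the $n = 0$ contribution is $\chi(0) = 0$. Hence every coefficient vanishes and $\theta_\chi = 0$.

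For the second claim, suppose $\chi$ is even. Applying the same involution to $\Theta_\chi = \sum_{n \in \Z} n\,\chi(n) q^{n^2}$, the coefficient of $q^{n^2}$ for each $n \geq 1$ becomes
\[
n\,\chi(n) + (-n)\,\chi(-n) \;=\; n\,\chi(n)\bigl(1 - \chi(-1)\bigr) \;=\; n\,\chi(n)(1-1) \;=\; 0,
\]
and again the $n=0$ term is zero. So $\Theta_\chi = 0$.

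Since both claims reduce to a single-line parity check, there is no real obstacle; the only thing to watch is that the involution $n \mapsto -n$ is compatible with the unconditional convergence of the $q$-series as formal power series, which is automatic because only finitely many $n$ contribute to any given power of $q$. I would present the argument as one short paragraph handling both cases in parallel, emphasizing the symmetry $q^{n^2} = q^{(-n)^2}$ as the geometric content and the relation $\chi(-n) = \chi(-1)\chi(n)$ as the algebraic content.
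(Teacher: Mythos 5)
Your proof is correct and is exactly the paper's argument: the paper's entire proof is the one-line instruction ``Consider $n \mapsto -n$,'' and your write-up simply carries out that involution, pairing $\pm n$ and using $\chi(-n)=\chi(-1)\chi(n)$ to cancel coefficients. No gaps; you have just made the paper's terse proof explicit.
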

\begin{proof}
Consider $n \mapsto -n$. 
\end{proof}
Theta series associated to $\chi_4$ and $\tilde \chi_4$ are essentially the same. Note that
$$
(3,n)=1 \Rightarrow \chi_4(n) = \tilde \chi_4(n).
$$
On the other hand, if $(3,n) \not = 1$, then $n= 3m$ and $\tilde \chi_4 (n) = 0$. One can rewrite it as
\begin{align}
\theta_{\chi_4}(q)-  \theta_{\tilde \chi_{4}}(q) &= \sum_{m \in \Z} \chi_4(3m) q^{(3m)^2}
\\
&= \chi_4(3)\sum_{n \in \Z} \chi_4(n) q^{9 n^2}
\\
&= \chi_4(3) \theta_{\chi_4}(q^9)
\\
&= -\theta_{\chi_4}(q^9),
\end{align}
so
$$
\theta_{\tilde \chi_{4}} (q) = \theta_{\chi_4}(q) + \theta_{\chi_4}(q^9).
$$
Similarly, we have
\begin{align}\label{eq-old}
\Theta_{\tilde \chi_{4}} (q) = \Theta_{\chi_4}(q) + \Theta_{\chi_4}(q^9).
\end{align}

\subsection{Writing $\phi, \psi, c_{\mathbb L}$ as theta series}
The key point is that the series $\phi$, $\psi$, and $c_{\mathbb L}$ are essentially linear combinations of these theta series. 
\begin{prop}\label{theta1} We have
\begin{eqnarray*}
 \phi(q) &= &\frac{1}{2} \Theta_{\chi_4}(q^9) \\
 \psi(q) &=& \frac{1}{2} \Theta_{\tilde \chi_4}(q) \\ 
 c_{\mathbb L}(q) &=& \frac{1}{2} \theta_{\chi_{12}}(q)
 \end{eqnarray*}
\end{prop}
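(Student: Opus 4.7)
\textbf{Proof proposal for Proposition \ref{theta1}.}

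The plan is to verify each identity directly by matching coefficients, using the parity/residue class structure of the defining characters. The essential input is to identify, for each character $\chi$ appearing on the right, which integers $n$ have $\chi(n)\neq 0$ and to compute $\chi(n)$ explicitly as $\pm 1$ on the relevant arithmetic progressions. Because the exponents in $\phi$, $\psi$, $c_{\mathbb L}$ are all of the shape $(an+b)^2$ with $n\in\Z$, the double-sided sums defining the $\Theta$/$\theta$ series will collapse, after the symmetry $n\mapsto -n$, onto a single-sided sum matching the formulas on the left.

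First, for $\phi$: parameterize the odd integers by $n=2m+1$ and observe $\chi_4(2m+1)=(-1)^m$. Substituting $q\mapsto q^9$ gives
\[
\Theta_{\chi_4}(q^9)=\sum_{m\in\Z}(-1)^m(2m+1)q^{9(2m+1)^2}=\sum_{m\in\Z}(-1)^m(2m+1)q^{(6m+3)^2},
\]
which is exactly $2\phi(q)$ after relabeling $m=k$ and factoring $\tfrac12$ out of $(k+\tfrac12)$.

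Next, for $\psi$: the support of $\tilde\chi_4$ is $(\Z/12\Z)^\times$, so all contributing $n$ satisfy $n\equiv\pm 1\pmod 6$. A direct check on the four residue classes $1,5,7,11\pmod{12}$ yields
\[
\tilde\chi_4(6k+1)=(-1)^k,\qquad \tilde\chi_4(6k-1)=-(-1)^k.
\]
Split $\Theta_{\tilde\chi_4}(q)$ accordingly; in the $6k-1$ piece, send $k\mapsto -k$ to convert $6k-1$ to $-(6k+1)$ and $(6k-1)^2$ to $(6k+1)^2$. Two sign flips (from the character and from the explicit $n$ factor in $\Theta$) combine to reproduce the same summand as the $6k+1$ piece, yielding $\Theta_{\tilde\chi_4}(q)=2\psi(q)$.

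Finally, for $c_{\mathbb L}$: a similar case-check gives $\chi_{12}(6k+1)=(-1)^k$ and $\chi_{12}(6k-1)=(-1)^k$, and the same $k\mapsto -k$ symmetry, now acting on $\theta_{\chi_{12}}$ (no explicit $n$ factor), shows the two residue classes contribute equally, giving $\theta_{\chi_{12}}(q)=2c_{\mathbb L}(q)$. The main (and only) subtlety throughout is sign bookkeeping under $n\mapsto -n$: in $\Theta_\chi$ the extra factor of $n$ must cancel against the sign coming from the character at $-n$, while in $\theta_\chi$ the character alone must be even on the support. These are exactly what the tabulated values of $\tilde\chi_4$ and $\chi_{12}$ provide, so no further input is required.
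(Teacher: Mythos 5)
Your proposal is correct: all three identities are verified by coefficient matching, the tabulated values $\tilde\chi_4(6k+1)=(-1)^k$, $\tilde\chi_4(6k-1)=-(-1)^k$, $\chi_{12}(6k\pm 1)=(-1)^k$ check out against the definitions of the characters mod $12$, and the folding under $k\mapsto -k$ is handled with the right signs in both the $\Theta$ and $\theta$ cases. The $\phi$ case is identical to the paper's. For $\psi$ and $c_{\mathbb L}$ your route is organized differently: the paper first forms the combination $\tilde\chi_4+\chi_{12}$, whose support collapses to the classes $12k+1$ and $12k+7$ (i.e.\ to $n=6k+1$ with sign $(-1)^k$), so that $\Theta_{\tilde\chi_4}+\Theta_{\chi_{12}}=2\psi$ and $\theta_{\tilde\chi_4}+\theta_{\chi_{12}}=2c_{\mathbb L}$ come out of a single computation; it then discards the unwanted term in each line by the parity vanishing lemma ($\Theta_\chi=0$ for $\chi$ even, $\theta_\chi=0$ for $\chi$ odd). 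You instead evaluate each character separately on the two progressions $6k\pm1$ and fold the $6k-1$ half onto the $6k+1$ half by $k\mapsto -k$. The two arguments use the same underlying symmetry $n\mapsto -n$; the paper's packaging handles $\psi$ and $c_{\mathbb L}$ simultaneously and isolates the parity statement as a reusable lemma, while yours is more direct and avoids introducing the auxiliary sum of characters. Either is a complete proof.
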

\begin{remark}
Lau-Zhou \cite{LZ} observed that $\phi, \psi$ can be written in terms of eta functions
$$\phi(q) = - \eta(q^3)^3, 
\psi(q) =- \left( \eta(q^{1/3})^3 + 3\eta(q^3)^3\right) $$
\end{remark}
\begin{proof}
For the case of $\phi$, we have
\begin{align}
\Theta_{\chi_4} &= \sum_{n \in \Z } \chi_4(n) nq^{n^2}
\\
&= \sum_{k \in \Z } (-1)^k (2k+1)q^{(2k+1)^2}
\\
&=  2\cdot \phi(q^{1/9}).
\end{align}

Equivalently,
$$
\phi(q) = \frac 1 2 \Theta_{\chi_4}(q^9).
$$

For the case of $\psi$, we have
\begin{align}
\Theta_{\tilde \chi_4}+\Theta_{\chi_{12}} &= \sum_{n \in \Z } \left( \tilde \chi_4(n) + \chi_{12}(n) \right) nq^{n^2}
\\
&=2 \times \sum_{k \in \Z }(-1)^k(6k+1)q^{(6k+1)^2}
\\
&=2\psi(q).
\end{align}
However, the parity of $\chi_{12}$ being even forces $\Theta_{\chi_{12}}=0$. Thus, we have
$$
\Theta_{\tilde \chi_4}=2\psi(q).
$$
For the case of $c_{\mathbb L}$,
\begin{align}
\theta_{\tilde \chi_4}+\theta_{\chi_{12}} &= \sum_{n \in \Z } \left( \tilde \chi_4(n) + \chi_{12}(n) \right) q^{n^2}
\\
&=2 \times \sum_{k \in \Z }(-1)^kq^{(6k+1)^2}
\\
&=2c_{\mathbb L}(q).
\end{align}
Now $\chi_4$ is odd so $\theta_{\tilde \chi_4}=0$. In summary, we have
$$
\theta_{\chi_{12}}=2c_{\mathbb L}(q).
$$
\end{proof}
\subsection{Proof of the mirror identity}
Let $M_k(\Gamma)$ be the space of modular forms with level $\Gamma$ and weight $k$, with respect to some multiplier system. 
The desired identity \eqref{mirrorid}  becomes the equivalence of the following two series
\begin{align}\label{thetaid}
A(q) &= \theta_{\chi_{12}}(q)^2 [\Theta_{\chi_{4}}(q^9),\Theta_{\tilde \chi_4}(q)]_1 
\\
B(q) &=  12 \Theta_{\chi_{4}}(q^9) \left( 27 \Theta_{\chi_{4}}(q^9)^3 - \Theta_{\tilde \chi_4}(q)^3 \right).
\end{align}
\begin{remark}
Here we use the notation of Rankin-Cohen brackets \cite{R57} which are series of pairings
$$
[-,-]_n \colon M_k \times M_\ell \to M_{k+\ell+2n}
$$
for $n=0,1, \cdots$. When $n=0$, $[f,g]_0=fg$. When $n=1$, it is given by 
$$
[f,g]_1 = k fq \partial_q g - \ell g q\partial _q f.
$$
\end{remark}
From \cite{LZ}, $\phi, \psi$ has weight $\frac{3}{2}$ and one can check that $c_{\mathbb{L}}$ has weight $\frac{1}{2}$. Hence both side of \eqref{mirrorid} has weight $6$.

\begin{prop}
Each of $A(q)$ and $B(q)$ belongs to $M_6(\Gamma_0(576))$. 
\end{prop}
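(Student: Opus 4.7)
The plan is to decompose $A$ and $B$ into the theta-series building blocks provided by Proposition \ref{theta1} and then use Proposition \ref{shimura} together with the standard operations (pointwise multiplication, the level-raising map $f(\tau)\mapsto f(N\tau)$, and the Rankin--Cohen bracket $[\,\cdot\,,\,\cdot\,]_1$) to verify that both objects lie in $M_6(\Gamma_0(576))$. The weight computation $6=1+\tfrac{3}{2}+\tfrac{3}{2}+2=4\cdot\tfrac{3}{2}$ is already noted in the text, so the real content is the level and the triviality of the multiplier system.

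For the level, the conductors of $\chi_4$, $\tilde\chi_4$, $\chi_{12}$ are $4$, $12$, $12$ respectively, so Proposition \ref{shimura} places $\Theta_{\chi_4}$ on $\Gamma_0(64)$ and $\Theta_{\tilde\chi_4}$, $\theta_{\chi_{12}}$ on $\Gamma_0(4\cdot 12^2)=\Gamma_0(576)$. The standard fact that $f(N\tau)$ is modular on $\Gamma_0(NM)$ whenever $f$ is modular on $\Gamma_0(M)$ places $\Theta_{\chi_4}(9\tau)$ on $\Gamma_0(9\cdot 64)=\Gamma_0(576)$. Hence every factor appearing in $A$ and $B$ is already $\Gamma_0(576)$-modular, and this property is preserved by multiplication as well as by the Rankin--Cohen bracket.

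The delicate part is the bookkeeping of the multipliers. For $\gamma\in\Gamma_0(576)$ I would use the elementary identities $\left(\tfrac{-1}{d}\right)=\chi_4(d)$, $\left(\tfrac{9}{d}\right)=1$, $j(\gamma,\tau)^2=\chi_4(d)(c\tau+d)$, and the direct check on $(\Z/12\Z)^\times$ that $\chi_4\tilde\chi_4\equiv 1$. These reduce the transformation laws on $\Gamma_0(576)$ to $\Theta_{\chi_4}(9\gamma\tau)=j(\gamma,\tau)^3\Theta_{\chi_4}(9\tau)$, $\Theta_{\tilde\chi_4}(\gamma\tau)=j(\gamma,\tau)^3\Theta_{\tilde\chi_4}(\tau)$, and $\theta_{\chi_{12}}(\gamma\tau)=\chi_{12}(d)\,j(\gamma,\tau)\,\theta_{\chi_{12}}(\tau)$. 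Then $\theta_{\chi_{12}}^2$ transforms with character $\chi_4$ coming from $j^2$, and the bracket $[\Theta_{\chi_4}(9\tau),\Theta_{\tilde\chi_4}]_1$ transforms as $j^{10}=\chi_4(d)(c\tau+d)^5$, which also supplies $\chi_4$; multiplying out yields $A(\gamma\tau)=(c\tau+d)^6A(\tau)$, with trivial character. For $B$, each of the two quartic monomials $\Theta_{\chi_4}(9\tau)^4$ and $\Theta_{\chi_4}(9\tau)\Theta_{\tilde\chi_4}^3$ transforms as $j^{12}=(c\tau+d)^6$, again with trivial character. Holomorphicity at every cusp is inherited from the theta series and is preserved by all the operations used.

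The main obstacle will be precisely this multiplier computation for half-integral weight forms, since one must carefully track the factors $\epsilon_d$, the Legendre symbols $\left(\tfrac{c}{d}\right)$, and the interplay between $\Gamma_0(576)$ and the smaller level $\Gamma_0(64)$ on which the unshifted $\Theta_{\chi_4}$ naturally transforms; once these cancellations are pinned down, the rest is a routine application of the structural properties of spaces of modular forms.
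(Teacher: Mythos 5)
Your proposal follows essentially the same route as the paper: decompose into the theta blocks of Proposition \ref{theta1}, invoke Shimura's transformation formula to pin down the level $576$, and then check that the half-integral-weight multipliers and quadratic characters cancel; your multiplier bookkeeping (via $j^2=\left(\tfrac{-1}{d}\right)(c\tau+d)$ and $\chi_4\tilde\chi_4\equiv 1$ on $(\Z/12\Z)^\times$) is correct and in fact more explicit than what the paper records. The one caveat is your treatment of $\Theta_{\tilde\chi_4}$: the character $\tilde\chi_4$ is \emph{imprimitive} (it is the mod-$12$ character induced from $\chi_4$, so its conductor is $4$, not $12$), and Proposition \ref{shimura} as stated applies only to primitive characters, so you cannot quote it verbatim with $r=12$; the paper sidesteps this by deriving the transformation law for $\Theta_{\tilde\chi_4}$ from the decomposition \eqref{eq-old} into shifts of $\Theta_{\chi_4}$, and your argument should be patched the same way (the resulting transformation law, and hence your conclusion, is unchanged).
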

\begin{proof}
This is a direct application of Shimura's formula, Proposition~\ref{shimura}. It is clear from the outset that Shimura's formula would yield a transformation formula for $\Gamma_0(N)$ for some large $N$. Indeed, $\Theta_{\chi_4}(q)$ has a transformation formula for $\Gamma_0(4\cdot 4^2)= \Gamma_0(64)$ whence $\Theta_{\chi_4}(q^9)$ for $\Gamma_0(64\cdot9)=\Gamma_0(576)$. It follows, using \eqref{eq-old}, that $\Theta_{\tilde \chi_4}(q)$ satisfies a transformation formula for $\Gamma_0(576)$. Also, $\theta_{\chi_{12}}(q)$ satisfies a transformation formula for $\Gamma_0(4\cdot12^2)=\Gamma_0(576)$. This shows that one can take $N=576$.
\par
Now it remains to show that the multiplier system and the character are both trivial. It follows from \eqref{j}: $j(\gamma,\tau)^4 = (c\tau+d)^2$, and the fact that the characters $\chi_4$ and $\chi_{12}$ are quadratic. 
\end{proof}
\begin{prop}
If $A(q) = B(q)$ modulo $q^{577}$, then $A(q)=B(q)$. 
\end{prop}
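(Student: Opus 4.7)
The plan is to invoke Sturm's bound for the space $M_6(\Gamma_0(576))$. By the preceding proposition both $A(q)$ and $B(q)$ lie in this space, so their difference $f(q) := A(q) - B(q)$ does as well. Sturm's theorem asserts that if $f \in M_k(\Gamma_0(N))$ has trivial character and $f(q) \equiv 0 \pmod{q^{B+1}}$ with $B = \lfloor \frac{k}{12} [SL_2(\Z) : \Gamma_0(N)] \rfloor$, then $f \equiv 0$.

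The next step is an index computation. For $N = 576 = 2^6 \cdot 3^2$ the standard formula gives
$$[SL_2(\Z) : \Gamma_0(576)] = 576 \prod_{p \mid 576}\left(1 + \frac{1}{p}\right) = 576 \cdot \frac{3}{2} \cdot \frac{4}{3} = 1152,$$
so with $k = 6$ the Sturm bound evaluates to $B = \frac{6 \cdot 1152}{12} = 576$. The hypothesis $A(q) \equiv B(q) \pmod{q^{577}}$ says precisely that every Fourier coefficient of $f$ through degree $576$ vanishes, which meets the Sturm bound on the nose. Hence $f = 0$ identically, which is the desired equality.

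The only genuine subtlety is making sure Sturm's theorem applies as stated, since the usual formulation requires trivial multiplier system and trivial nebentypus, whereas $A$ and $B$ are built from half-integral weight theta series $\Theta_{\chi_4}$, $\Theta_{\tilde\chi_4}$ and $\theta_{\chi_{12}}$ carrying an \emph{a priori} nontrivial Shimura multiplier $j(\gamma,\tau)$. This step is already handled in the previous proposition: the combination of Shimura's transformation formula (Proposition~\ref{shimura}), the identity $j(\gamma,\tau)^4 = (c\tau+d)^2$, and the fact that $\chi_4$ and $\chi_{12}$ are quadratic conspire so that the weights are balanced to even integers and the characters cancel in pairs on $\Gamma_0(576)$. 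Thus both $A$ and $B$, and therefore $f$, descend to honest elements of $M_6(\Gamma_0(576))$ with trivial character, and Sturm's theorem applies directly, finishing the proof.
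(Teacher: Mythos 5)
Your proof is correct and follows the same route as the paper, which simply cites Sturm's bound; you have usefully filled in the index computation $[SL_2(\Z):\Gamma_0(576)]=1152$ and the resulting bound $\lfloor 6\cdot 1152/12\rfloor = 576$, which matches the threshold $q^{577}$ in the statement exactly. The remark about the multiplier system and character being trivial is indeed the content of the preceding proposition, as you note, so nothing is missing.
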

\begin{proof}
It is a direct consequence of Sturm's bound \cite{Sturm}.
\end{proof}
Now it remains to check $A(q)=B(q)$ up to $576$-th order. We have checked it by computer
program up to millionth order.
 \end{proof}
 
\bibliographystyle{amsalpha}

\end{document}